\newtheorem{theorem}{Theorem}[section]
\newtheorem{lemma}[theorem]{Lemma}
\newtheorem{corollary}[theorem]{Corollary}
\newtheorem{proposition}[theorem]{Proposition}
\theoremstyle{definition}
\newtheorem{definition}[theorem]{Definition}
\newtheorem*{definition*}{Definition}
\theoremstyle{remark}
\newtheorem*{remark}{Remark}
\newtheorem{example}[theorem]{Example}
\numberwithin{equation}{section}
\newcommand {\N}{\mathbb{N}} %% positive integers
\newcommand{\PP}{\mathcal{P}}
\newcommand{\CC}{\mathcal{C}}
\newcommand{\LL}{\mathcal{L}}
\newcommand{\GG}{\mathcal{G}}
\DeclareMathOperator{\M}{Mat}
\DeclareMathOperator{\Map}{Map}
\DeclareMathOperator{\Int}{Int}
\DeclareMathOperator{\Id}{Id}
\DeclareMathOperator{\Tr}{Tr}
\begin{document}
\title[On sofic monoids]{On sofic monoids} 
%and the generalized Gottschalk conjecture

\author[T.Ceccherini-Silberstein]{Tullio Ceccherini-Silberstein}
\address{Dipartimento di Ingegneria, Universit\`a del Sannio, C.so
Garibaldi 107, 82100 Benevento, Italy}
\email{tceccher@mat.uniroma3.it}
\author[M.Coornaert]{Michel Coornaert}
\address{Institut de Recherche Math\'ematique Avanc\'ee \\
 UMR 7501, Universit\'e de Strasbourg et CNRS \\
                                                 7 rue Ren\'e-Descartes \\
 67000 Strasbourg, France  }
\email{coornaert@math.unistra.fr}
\subjclass[2000]{43A07, 37B15, 68Q80}
\keywords{Sofic monoid,  residually finite monoid, amenable monoid, bicyclic monoid} 
%cellular automaton, Gottschalk conjecture, surjunctive monoid}
\date{\today}
\begin{abstract}
We  investigate a notion of soficity for monoids.
A group is sofic as a group if and only if it is sofic as a monoid.
All finite monoids, all commutative monoids, all free monoids, all cancellative one-sided amenable monoids, all multiplicative monoids of matrices over a field, and all monoids obtained by adjoining an identity element to a semigroup are sofic.
On the other hand, although the question of the existence of a non-sofic group remains open,
we prove that the bicyclic monoid is not sofic.
This shows that there exist finitely presented amenable inverse monoids that are non-sofic.
\end{abstract}

\maketitle

% SECTION 1
\section{Introduction}
\label{s:introduction}

Sofic groups were introduced at the end of the last century by M.~Gromov \cite{gromov-esav} and B.~Weiss \cite{weiss-sgds}.
The class of groups  they constitute is very large since it includes in particular all locally residually amenable groups and hence all linear groups.
Actually, the question whether or not every group is sofic remains open up to now although several experts in the field think that the answer to this question should be negative. 
Roughly speaking, a group is sofic when it can be well approximated by finite symmetric groups.
Sofic groups satisfy certain finiteness properties that are important in the theory of dynamical systems and operator algebras. For example, it is known that every sofic group
is surjunctive \cite{weiss-sgds}, hyperlinear \cite{elek-hyperlinearity}, and has stably finite group algebras whatever the ground field \cite{elek-stable-finiteness}.
For an introduction to the theory of sofic groups, the reader is referred to the excellent survey paper \cite{pestov-guide} or to \cite[Chapter 7]{book}.  
\par
The theme of soficity was fruitfully developed in several other directions: weakly-sofic groups 
\cite{glebsky-rivera}, linearly sofic groups \cite{arzhantseva-linear-sofic}, \cite{stolz}, sofic groupoids of measure-preserving transformations \cite{dykema-sofic-groupoid}, \cite{bowen-sofic-groupoid}, and sofic measure-preserving equivalence relations \cite{elek_sofic-equivalence}.
In each of these settings, the basic question of the existence of a non-sofic object remains still unanswered. 
\par
The goal of the present note is to investigate a notion of soficity for monoids, i.e., sets
equipped with a binary operation that is associative and admits an identity element.
With our definition, a group is sofic as a monoid if and only if it is sofic as a group. 
As every submonoid of a sofic monoid is sofic, this implies that every monoid that can be embedded into a sofic group is itself sofic.
Consequently, all free monoids, all cancellative one-sided amenable monoids, are sofic.
The class of sofic monoids is closed under  direct products, projective limits and inductive limits.
We shall also see that all finite monoids and all commutative monoids are sofic.  
As there exist finite monoids as well as commutative monoids that are not cancellative, 
this shows in particular that there are sofic monoids that cannot be embedded into groups.
On the other hand, we shall prove that the bicyclic monoid is non-sofic.
Thus there exist finitely presented amenable inverse monoids that are not sofic.

Finally, we shall present a graph theoretic characterization of soficity for finitely generated left-cancellative monoids in terms of approximability of their Cayley graphs by finite labeled graphs. This characterization is analogous to the one used by Weiss in \cite{weiss-sgds} for defining sofic groups.
  
% SECTION 2
\section{Background material}
\label{sec:background}

\subsection{Semigroups and monoids}
A \emph{semigroup} is a set equipped with an associative binary operation.
Unless stated otherwise, we will use  multiplicative notation for the binary operation on a semigroup. 
\par
Let $S$ be a semigroup.
\par
Given $s \in S$, we denote by $L_s$ and $R_s$ the left and right multiplication by $s$, that is,
the maps $L_s \colon S \to S$ and $R_s \colon S \to S$ defined by $L_s(t) = st$ and $R_s(t) = ts$ for all $t \in S$.
An element $s \in S$  is called \emph{left-cancellable} (resp. \emph{right-cancellable}) 
if the map $L_s$ (resp. $R_s$) is injective.
One says that an element $s \in S$ is cancellable if it is both left-cancellable and right-cancellable.
The semigroup $S$ is called \emph{left-cancellative} (resp. \emph{right-cancellative}, resp. \emph{cancellative}) if every element in $S$ is left-cancellable  (resp.  right-cancellable, resp. cancellable).
\par
Given semigroups $S_1$ and $S_2$, a map $\varphi \colon S_1 \to S_2$ is called a 
\emph{semigroup morphism} if it satisfies $\varphi(st) = \varphi(s) \varphi(t)$ for all $s,t \in S_1$.
\par
A \emph{subsemigroup} of a semigroup $S$ is a subset $T\subset S$ such that $s_1 s_2 \in T$ for all $s_1,s_2 \in T$.
\par
A semigroup $S$ is called an \emph{inverse semigroup} if, for every $s \in S$, there exists a unique element $x \in S$ such that $s = sxs$ and $x = xsx$.
\par
A \emph{monoid} is a semigroup admitting an identity element.
If $M$ is a monoid, we denote its identity element by $1_M$.
\par
Given two monoids $M_1$ and $M_2$, a semigroup morphism $\varphi \colon M_1 \to M_2$ is called a \emph{monoid morphism} if it satisfies $\varphi(1_{M_1}) = 1_{M_2}$.
A \emph{submonoid} of a monoid $M$ is a subsemigroup $N \subset M$ such that $1_M \in N$.  
\par 
Let $\PP$ be a property  of monoids (e.g., being finite).  
One says that a monoid $M$ is \emph{locally} $\PP$ if every finitely generated submonoid of $M$  satisfies  $\PP$.
One says that a monoid $M$ is \emph{residually} $\PP$ if, given any pair of distinct elements $s_1,s_2 \in M$, there exists a monoid $N$ satisfying $\PP$ and a monoid morphism
$\varphi \colon M \to N$ such that $\varphi(s_1) \not= \varphi(s_2)$.
One says that a monoid $M$ is \emph{locally embeddable} into the class of monoids satisfying $\PP$ (or, for short, \emph{locally embeddable into} $\PP$) if, for every finite subset $K \subset M$, there exists a monoid $N$ satisfying $\PP$ and a map
$\varphi \colon M \to N$ satisfying the following properties:
(1) the restriction of $\varphi$ to $K$ is injective, (2) for all $k_1,k_2 \in K$, one has $\varphi(k_1 k_2) = \varphi(k_1) \varphi(k_2)$, (3) $\varphi(1_M) = 1_N$ 
(note that $\varphi$ is not required to be globally injective nor to be a semigroup morphism).

\begin{proposition}
\label{p:residually-LEP}
Let $\PP$ be a property of monoids.
Suppose that any finite product of monoids satisfying $\PP$ also satisfies $\PP$.
Then every locally residually $\PP$ monoid is locally embeddable into $\PP$.
\end{proposition}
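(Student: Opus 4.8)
The plan is to fix a finite subset $K \subset M$ and to produce the required monoid $N$ and map $\varphi \colon M \to N$ by amalgamating a finite family of morphisms, one for each pair of elements of $K$ that needs to be separated. First I would pass to the submonoid $M'$ of $M$ generated by $K$. Since $M$ is locally residually $\PP$, its finitely generated submonoid $M'$ is residually $\PP$; this is the step where the hypothesis is used to upgrade the local information into an honest residual statement about a single submonoid. It is worth checking here that $K$, every product $k_1 k_2$ with $k_1,k_2 \in K$, and the identity $1_M$ all lie in $M'$, which holds because $M'$ is a submonoid containing $K$.

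Next, for each pair $\{k_1,k_2\}$ of distinct elements of $K$, the residual $\PP$ property of $M'$ supplies a monoid $N_{\{k_1,k_2\}}$ satisfying $\PP$ and a monoid morphism $\varphi_{\{k_1,k_2\}} \colon M' \to N_{\{k_1,k_2\}}$ with $\varphi_{\{k_1,k_2\}}(k_1) \neq \varphi_{\{k_1,k_2\}}(k_2)$. I would then form
\[
N = \prod_{\{k_1,k_2\}} N_{\{k_1,k_2\}},
\]
the product running over the finitely many such pairs, and let $\psi \colon M' \to N$ be the morphism whose $\{k_1,k_2\}$-component is $\varphi_{\{k_1,k_2\}}$. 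Because $K$ is finite this is a finite product, so the closure hypothesis guarantees that $N$ satisfies $\PP$; and by construction $\psi$ distinguishes every pair of distinct elements of $K$, hence is injective on $K$.

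Finally I would take $\varphi \colon M \to N$ to be any extension of $\psi$ to all of $M$ (for instance sending $M \setminus M'$ to $1_N$), which is legitimate since $\varphi$ is not required to be globally a morphism. Conditions (1)--(3) are then immediate: (1) holds because $\varphi|_K = \psi|_K$ is injective; (2) holds because $k_1$, $k_2$ and $k_1 k_2$ all lie in $M'$, where $\psi$ is a morphism, so $\varphi(k_1 k_2) = \psi(k_1)\psi(k_2) = \varphi(k_1)\varphi(k_2)$; and (3) holds because $1_M \in M'$ and $\psi(1_M) = 1_N$.

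The only genuine content is the appeal to closure under finite products, which merges the pairwise-separating morphisms into one $\PP$-valued morphism injective on $K$; once a bona fide morphism is defined on a submonoid containing $K$ and its pairwise products, conditions (2) and (3) come for free. I do not expect a serious obstacle, but two points deserve care: the reduction to the finitely generated $M'$ (since only \emph{local} residual $\PP$ is assumed), and the degenerate case $|K| \le 1$, where no pairs need separating and the product above is empty — here one invokes the convention that the empty product (the trivial monoid) satisfies $\PP$, or simply notes that injectivity is then vacuous.
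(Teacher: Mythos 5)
Your proof is correct and follows essentially the same route as the paper: pass to the submonoid generated by $K$, use residual $\PP$ to separate each pair of distinct elements of $K$, take the finite product of the resulting morphisms (which lands in a $\PP$-monoid by the closure hypothesis), and extend arbitrarily to $M$. Your extra remark about the degenerate case $|K|\le 1$ is a sensible precaution that the paper leaves implicit.
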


\begin{proof}
Suppose that $M$ is a locally residually $\PP$ monoid and $K \subset M$ is a finite subset.
Denote by $T$ the monoid generated by $K$.
Let $D := \{\{s,t\} : s,t \in K \text{ and } s \not= t\}$.
As $T$ is residually $\PP$,  for each $d = \{s,t\} \in D$, there exist a monoid $R_d$ satisfying $\PP$  with  a monoid morphism $\psi_d \colon T \to R_d$ such that $\psi_d(s) \not= \psi_d(t)$.
By our hypothesis, the product monoid $P := \prod_{d \in D} R_d$ satisfies $\PP$.
The product monoid morphism $\psi := \Pi_{d \in D} \psi_d \colon T \to P$ is injective on $K$.
By extending arbitrarily $\psi$ to $M$, we get a map $\varphi \colon M \to P$ that is injective on $K$, and such that $\varphi(k_1 k_2) = \varphi(k_1) \varphi(k_2)$ for all $k_1,k_2 \in K$ and $\varphi(1_M) = 1_P$.
This shows that $M$ is locally embeddable into $\PP$.
\end{proof}

A monoid that is locally embeddable into the class of finite monoids is called an \emph{LEF-monoid}. As a product of finitely many finite semigroups is finite, we deduce from 
Proposition~\ref{p:residually-LEP} the following: 

\begin{corollary}
\label{c:loc-rf-are-LEF}
Every locally residually finite monoid is an LEF-monoid.
In particular, every residually finite monoid and every locally finite monoid is an 
LEF-monoid.
\qed
\end{corollary}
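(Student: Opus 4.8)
The plan is to invoke Proposition~\ref{p:residually-LEP} with $\PP$ taken to be the property of being finite. The sole hypothesis of that proposition demands that a finite product of monoids satisfying $\PP$ again satisfy $\PP$; here this reduces to the elementary fact that a finite product of finite monoids is finite, its cardinality being the product of the cardinalities of the factors. Once this is noted, Proposition~\ref{p:residually-LEP} immediately yields that every locally residually finite monoid is locally embeddable into the class of finite monoids, which is precisely the definition of an LEF-monoid. This settles the first assertion, and the rest is a matter of recognizing the two ``in particular'' hypotheses as special cases of local residual finiteness.

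For these two statements I would reduce each to the first assertion by checking that the hypothesis forces every finitely generated submonoid to be residually finite. In the residually finite case the crucial observation is that residual finiteness is inherited by submonoids. Indeed, let $T \subset M$ be a submonoid and let $s,t \in T$ be distinct. Since $M$ is residually finite, there exist a finite monoid $N$ and a monoid morphism $\varphi \colon M \to N$ with $\varphi(s) \neq \varphi(t)$; restricting $\varphi$ to $T$ gives a monoid morphism $T \to N$ that still separates $s$ and $t$. Hence $T$ is residually finite, and as $T$ was an arbitrary finitely generated submonoid, $M$ is locally residually finite and therefore an LEF-monoid by the first assertion.

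The locally finite case is even more direct: if $M$ is locally finite then every finitely generated submonoid $T$ is finite by hypothesis, and every finite monoid is trivially residually finite, since distinct elements are already separated by the identity morphism $\Id_T \colon T \to T$ into the finite monoid $T$ itself. Thus every finitely generated submonoid of $M$ is residually finite, so $M$ is locally residually finite and the first assertion again applies. The deduction is essentially formal, and the only point carrying any content at all is the descent of residual finiteness to submonoids used in the second paragraph, which amounts to nothing more than restricting separating morphisms; I therefore anticipate no genuine obstacle.
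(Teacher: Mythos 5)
Your proposal is correct and follows the same route as the paper: apply Proposition~\ref{p:residually-LEP} with $\PP$ the property of being finite, using that a finite product of finite monoids is finite. The paper leaves the two ``in particular'' reductions implicit, and the details you supply (residual finiteness passes to submonoids by restricting separating morphisms; finite monoids are residually finite via the identity) are exactly the intended ones.
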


\subsection{Symmetric monoids and the Hamming metric} 
Let  $X$ be a set.
We denote by $\Map(X)$ the symmetric monoid of $X$, i.e., the set consisting of all maps
$f \colon X \to X$   with the composition of maps as the monoid operation.
The identity element of the symmetric monoid $\Map(X)$ is the identity map $\Id_X \colon X \to X$.
\par
Suppose  that  $X$ is a non-empty finite set.
The \emph{Hamming metric}   $d_X^{\text{Ham}}$  on 
 $\Map(X)$ is the metric defined by 
\[
 d_X^{\text{Ham}}(f,g) := \frac{1}{|X|} |\{x \in X : f(x) \not= g(x)\}|
 \] 
 for all $f,g \in \Map(X)$ (we use $|\cdot|$ to denote cardinality of finite sets). Note that
 $0 \leq  d_X^{\text{Ham}}(f,g) \leq 1$ for all $f,g \in \Map(X)$.
\par
Suppose now that  $X_1, X_2, \ldots, X_n$ is a finite sequence of  non-empty finite sets.  
Consider the Cartesian product $X = \prod_{1 \leq i \leq n} X_i$ and the natural semigroup   morphism  $\Phi \colon \prod_{1 \leq i \leq n} \Map(X_i) \to \Map(X)$ given by
$$
\Phi(f)(x) = (f_1(x_1),  \ldots, f_n(x_n))
$$
for all $f = (f_i)_{1 \leq i\leq n} \in \prod_{1 \leq i \leq n}\Map(X_i)$ and $x = (x_i)_{1 \leq i\leq n} \in X$.
 
\begin{proposition}
With the above notation, one has
\begin{equation}
\label{e;hamming-product}
 d_X^{\text{Ham}}(\Phi(f),\Phi(g)) = 1 - \prod_{1 \leq i \leq n} \left(1 - d_{X_i}^{\text{Ham}}(f_i,g_i)\right)
\end{equation}
for all $f = (f_i)_{1 \leq i\leq n}$ and $g = (g_i)_{1 \leq i \leq n}$ in $\prod_{1 \leq i \leq n} \Map(X_i)$.
\end{proposition}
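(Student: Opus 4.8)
The plan is to prove the identity by counting, but to count the set of points where $\Phi(f)$ and $\Phi(g)$ \emph{agree} rather than the set where they disagree. The point of this switch is that the agreement set factors as a Cartesian product over the indices $i$, whereas the disagreement set does not factor, so the agreement version is the one for which cardinalities multiply. The whole content of the statement is really this factorization.

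First I would set up notation coordinate by coordinate. For each $i$ put $A_i := \{x_i \in X_i : f_i(x_i) = g_i(x_i)\} \subseteq X_i$. Directly from the definition of the Hamming metric, the complement of $A_i$ has relative size $d_{X_i}^{\text{Ham}}(f_i,g_i)$, and hence $|A_i|/|X_i| = 1 - d_{X_i}^{\text{Ham}}(f_i,g_i)$. Next I would unravel the definition of $\Phi$: for $x = (x_i)_{1 \leq i \leq n} \in X$ one has $\Phi(f)(x) = (f_i(x_i))_{1 \leq i \leq n}$ and $\Phi(g)(x) = (g_i(x_i))_{1 \leq i \leq n}$, so that the two tuples coincide precisely when $f_i(x_i) = g_i(x_i)$ for every $i$, that is, precisely when $x_i \in A_i$ for every $i$. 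Consequently the global agreement set $A := \{x \in X : \Phi(f)(x) = \Phi(g)(x)\}$ is exactly the Cartesian product $\prod_{1 \leq i \leq n} A_i$.

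From here the computation is routine. Multiplicativity of cardinality over Cartesian products gives $|A| = \prod_{1 \leq i \leq n} |A_i|$, and dividing by $|X| = \prod_{1 \leq i \leq n} |X_i|$ yields
\[
\frac{|A|}{|X|} = \prod_{1 \leq i \leq n} \frac{|A_i|}{|X_i|} = \prod_{1 \leq i \leq n}\left(1 - d_{X_i}^{\text{Ham}}(f_i,g_i)\right).
\]
Finally, since the set of points where $\Phi(f)$ and $\Phi(g)$ differ is the complement of $A$ in $X$, its relative size is $d_X^{\text{Ham}}(\Phi(f),\Phi(g)) = 1 - |A|/|X|$, which is exactly the claimed formula. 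I do not expect any genuine obstacle: the only step that carries any weight is recognizing that the agreement set is a product set, and once that is observed the identity drops out of the elementary fact that cardinalities are multiplicative over products.
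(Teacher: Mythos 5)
Your proof is correct and follows exactly the same route as the paper's: the paper's one-line argument is precisely the observation that the agreement set $\{x \in X : \Phi(f)(x) = \Phi(g)(x)\}$ equals the Cartesian product $\prod_{1 \leq i \leq n}\{x_i \in X_i : f_i(x_i) = g_i(x_i)\}$, after which one takes cardinalities. Your write-up just spells out the same computation in more detail.
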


\begin{proof}
The formula immediately follows from the equality \[
 \{ x \in X : \Phi(f)(x) = \Phi(g)(x) \} = \prod_{1 \leq i \leq n} \{x_i \in X_i : f_i(x_i) = g_i(x_i) \}
 \]
 after taking cardinalities of both sides.
  \end{proof}

\subsection{Labeled graphs}
Let $\Sigma$ be a finite set. A \emph{$\Sigma$-labeled graph} is a pair $\GG = (V,E)$, where $V$ is the set of \emph{vertices} and $E \subset V \times \Sigma \times V$
is the set of (\emph{$\Sigma$-labeled}) \emph{edges}. For instance, if $\Sigma \subset M$ is a finite generating subset of a monoid $M$, the associated \emph{Cayley graph} $\CC(M,\Sigma)$ has vertex set
$V:= M$ and edge set $E:=\{(s,\sigma,s\sigma): s \in M, \sigma \in \Sigma\}$.

Let $\GG = (V,E)$ be a $\Sigma$-labeled graph.

One says that $\GG$ is \emph{finite} if $V$ is finite. 
If $V' \subset V$ is a subset of vertices, then the graph $\GG' = (V',E')$, where 
$E' = E \cap (V' \times \Sigma \times V')$, is called
the \emph{subgraph of $\GG$ induced} by $V'$.

Given $e=(u,\sigma,v) \in E$, one says that $\alpha(e):=u\in V$ (resp. $\lambda(e) := \sigma \in \Sigma$, resp. $\omega(e):=v \in V$) is the \emph{initial vertex} (resp. \emph{label}, resp. \emph{terminal vertex}) of the edge $e$. An edge $e \in E$ such that
$\alpha(e) = \omega(e)$ is called a \emph{loop}. Note that in $\GG$ one may have \emph{multiple
edges} that is, distinct edges $e_1$ and $e_2$ satisfiyng $\alpha(e_1) = \alpha(e_2)$ and
$\omega(e_1) = \omega(e_2)$.

A \emph{path} of \emph{length} $n$ in $\GG$ is a sequence $\pi = (e_1,e_2, \ldots, e_n)$ of edges such that $\omega(e_i) = \alpha(e_{i+1}$ for $i=1,2,\ldots,n-1$; one then says that $\pi$ \emph{connects} $\alpha(e_1)$ to $\omega(e_n)$. Given $r \geq 0$ and $u \in V$, the \emph{ball of radius $r$ centered at $u$} is the
set $B_r^\GG(u)$ of all vertices $v \in V$ for which there exists a path connecting $u$ to $v$ of length $\leq r$.
A \emph{pointed} $\Sigma$-labeled graph is a pair $(\GG,v_0)$ where 
$\GG = (V,E)$ is a $\Sigma$-labeled graph and $v_0 \in V$ is a distinguished vertex. 
We shall regard the subgraph of $\GG$ induced by any ball $B_r^\GG(u)$ as a $\Sigma$-labeled graph pointed at its center $u$.

Finally, given another $\Sigma$-labeled graph $\GG' = (V',E')$, a \emph{label
graph isomorphism} from $\GG$ to $\GG'$ is a bijective map $\psi \colon V \to V'$ such that
$(\psi(u),\sigma, \psi(v)) \in E'$ for all $(u,\sigma,v) \in E$ and $(\psi^{-1}(u'),\sigma, \psi^{-1}(v')) \in E$ for all $(u',\sigma,v') \in E'$. If, in addition, $(\GG,v_0)$ and $(\GG',v_0')$
are pointed, we say that a label graph isomorphism $\psi$ from $\GG$ to $\GG'$ is \emph{pointed} provided $\psi(v_0) = v_0'$.

% SECTION 3
\section{Sofic monoids}
\label{sec:sofic}

 \begin{definition}
\label{d:almost-momorphism}
Let $M$ be a monoid, $K \subset M$   and $\varepsilon, \alpha > 0$. 
Let $N$ be a monoid equipped with a metric $d$.
\par
A map $\varphi \colon M \to N$ is called a $(K,\varepsilon)$-\emph{morphism}   if it satisfies 
\[
 d (\varphi(k_1k_2),\varphi(k_1)\varphi(k_2)) \leq \varepsilon \quad \text{for all  }k_1,k_2 \in K
 \]
 and 
 \[
 d(\varphi(1_M),1_N) \leq \varepsilon.
 \]
\par
A map $\varphi \colon M \to N$
is said to be $(K,\alpha)$-\emph{injective} if it satisfies
 \[
 d(\varphi(k_1),\varphi(k_2)) \geq \alpha
 \] 
 for all distinct $k_1, k_2 \in K$.
\end{definition}

If $X$ is   a non-empty finite set,
  we equip its symmetric monoid $\Map(X)$ with its Hamming metric.  

\begin{definition}
\label{d:sofic-semigroup}
A monoid $M$ is called \emph{sofic} 
if it satisfies the following condition:
 for every finite subset $K \subset M$ and every $\varepsilon > 0$,
there exist a non-empty finite set $X$ and a
$(K,1 - \varepsilon)$-injective  $(K,\varepsilon)$-morphism 
$\varphi \colon M \to \Map(X)$.
\end{definition}

 \begin{proposition}
\label{p:equiv-def-sofic}
Let $M$ be a monoid.
Then the following conditions are equivalent:
\begin{enumerate}[\rm (a)]
\item
$M$ is sofic;
\item
for every $0 < \alpha < 1$,  for every finite subset $K \subset M$ and every $\varepsilon > 0$,
there exist a non-empty finite set $X$ and a
$(K,\alpha)$-injective  $(K,\varepsilon)$-morphism 
$\varphi \colon M \to \Map(X)$.
\item 
there exists   $0 < \alpha < 1$ such that,
 for every finite subset $K \subset M$ and every $\varepsilon > 0$,
there exist   a non-empty finite set $X$ and a
$(K,\alpha)$-injective  $(K,\varepsilon)$-morphism 
$\varphi \colon M \to \Map(X)$.
\end{enumerate}
\end{proposition}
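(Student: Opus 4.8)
The plan is to establish the cycle of implications $(a) \Rightarrow (b) \Rightarrow (c) \Rightarrow (a)$. Two of these are immediate from the monotonicity built into Definition~\ref{d:almost-momorphism}: being a $(K,\varepsilon)$-morphism persists when $\varepsilon$ is increased, while being $(K,\alpha)$-injective persists when $\alpha$ is decreased. For $(a) \Rightarrow (b)$, given $0 < \alpha < 1$, a finite subset $K$, and $\varepsilon > 0$, I would set $\eta := \min(\varepsilon, 1-\alpha) > 0$ and apply (a) to obtain a $(K,1-\eta)$-injective $(K,\eta)$-morphism; since $1 - \eta \geq \alpha$ and $\eta \leq \varepsilon$, the very same map is $(K,\alpha)$-injective and a $(K,\varepsilon)$-morphism. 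The implication $(b) \Rightarrow (c)$ is trivial, as one only needs to fix a single value of $\alpha$, say $\alpha = 1/2$.

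All the content lies in $(c) \Rightarrow (a)$, which is an amplification argument built on the product construction $\Phi$ and the Hamming product formula \eqref{e;hamming-product}. Fix $0 < \alpha < 1$ as provided by (c), and let a finite subset $K \subset M$ and $\varepsilon > 0$ be given. Taking all factors equal to a single non-empty finite set $Y$, the diagonal map $\Delta \colon \Map(Y) \to \Map(Y^n)$ defined by $\Delta(f) = \Phi(f, \ldots, f)$ is a monoid morphism, and \eqref{e;hamming-product} specializes to $d_{Y^n}^{\text{Ham}}(\Delta(f),\Delta(g)) = 1 - (1 - d_Y^{\text{Ham}}(f,g))^n$. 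Given a $(K,\alpha)$-injective $(K,\varepsilon')$-morphism $\varphi \colon M \to \Map(Y)$, the composite $\psi := \Delta \circ \varphi$ then satisfies $d_{Y^n}^{\text{Ham}}(\psi(k_1),\psi(k_2)) = 1 - (1 - d_Y^{\text{Ham}}(\varphi(k_1),\varphi(k_2)))^n \geq 1 - (1-\alpha)^n$ for distinct $k_1, k_2 \in K$, while, using that $\Delta$ is a morphism so that $\psi(k_1)\psi(k_2) = \Delta(\varphi(k_1)\varphi(k_2))$ and $\Id_{Y^n} = \Delta(\Id_Y)$, both $d_{Y^n}^{\text{Ham}}(\psi(k_1 k_2),\psi(k_1)\psi(k_2))$ and $d_{Y^n}^{\text{Ham}}(\psi(1_M), \Id_{Y^n})$ are at most $1 - (1-\varepsilon')^n$.

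The crux, and the one point requiring care, is the order in which the parameters are chosen, because increasing $n$ sharpens injectivity but, for a fixed $\varepsilon'$, degrades the morphism estimate. I would first choose $n$ large enough that $(1-\alpha)^n \leq \varepsilon$, which forces $\psi$ to be $(K,1-\varepsilon)$-injective regardless of the remaining choices. With $n$ now fixed, I would then choose $\varepsilon' > 0$ small enough that $1 - (1-\varepsilon')^n \leq \varepsilon$; this is possible since the left-hand side tends to $0$ as $\varepsilon' \to 0$ (explicitly, $\varepsilon' \leq \varepsilon/n$ suffices by Bernoulli's inequality). Applying (c) to $K$ and this $\varepsilon'$ yields a suitable $\varphi$, and the resulting $\psi \colon M \to \Map(Y^n)$ is then the desired $(K,1-\varepsilon)$-injective $(K,\varepsilon)$-morphism, establishing (a).
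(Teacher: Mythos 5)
Your proposal is correct and follows essentially the same route as the paper: the easy implications are handled by monotonicity of the parameters, and (c) $\Rightarrow$ (a) is the same amplification argument via the diagonal morphism $\Delta \colon \Map(X) \to \Map(X^n)$ and the Hamming product formula, with $n$ chosen first to boost injectivity and $\varepsilon'$ chosen afterwards to preserve the morphism estimate. The only (harmless) addition is your explicit remark that $\varepsilon' \leq \varepsilon/n$ suffices.
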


\begin{proof}
Let $0 < \alpha < 1$, $K \subset M$ a finite subset  and $\varepsilon > 0$.
Choose $\varepsilon' > 0$ small enough so that  $\alpha \leq 1 - \varepsilon'$ and $\varepsilon' \leq \varepsilon$.
If $M$ is sofic,  we can find a non-empty finite set $X$ and a $(K,1 - \varepsilon')$-injective 
$(K,\varepsilon')$-morphism $\varphi \colon M \to \Map(X)$. Then $\varphi$ is a
$(K,\alpha)$-injective 
$(K,\varepsilon)$-morphism. This shows that (a) implies (b).
\par
Condition (b) trivially implies (c).
\par
To complete the proof, it suffices to show that (c) implies (a).
We use the technique of ``amplification"
(see for example \cite[Theorem~3.5]{pestov-guide}, \cite[Proposition~3.4]{glebsky-rivera}). 
Suppose that (c) is satisfied for some $0 <  \alpha < 1$.
Let $K \subset M$ be a finite subset and $\varepsilon > 0$.
Choose an integer $n \geq 1$ large enough so that
 \begin{equation}
\label{e:condition-on-n-alpha}
1 - \left(1 - \alpha\right)^n \geq 1 - \varepsilon
\end{equation}
and then $\varepsilon' > 0$ such that
 \begin{equation}
\label{e:condition-on-epsilon-n}
1 - \left(1 - \varepsilon'\right)^n \leq  \varepsilon.
\end{equation}
By (c), there exist a non-empty finite set $X$ and a map
$\varphi \colon M \to \Map(X)$ that is a $(K,\alpha)$-injective $(K,\varepsilon')$-morphism.
\par
Consider the diagonal monoid morphism $\Delta \colon \Map(X) \to \Map(X^n)$ defined by
\[
\Delta(f)(x_1,\dots,x_n) := (f(x_1),  \ldots, f(x_n))
\]
for all $f  \in  \Map(X)$ and $ (x_1,\dots,x_n) \in X^n$.
  Then the composite map $\psi := \Delta \circ \varphi  \colon M \to \Map(X^n)$ satisfies, for all distinct $k_1,k_2 \in K$,
\begin{align*}
d_{X^n}^{\text{Ham}}(\psi(k_1),\psi(k_2))
&= d_{X^n}^{\text{Ham}}(\Delta (\varphi(k_1)),\Delta(\varphi(k_2))) \\
&= 1 -   \left(1 - d_{X}^{\text{Ham}}(\varphi(k_1),\varphi(k_2))\right)^n  
&& \text{(by \eqref{e;hamming-product})} \\
& \geq 1 - \left(1 - \alpha\right)^n \\ 
% && \text{(since $\varphi$ is $(K,\alpha)$-injective)} \\
&\geq 1 - \varepsilon && \text{(by \eqref{e:condition-on-n-alpha})}. 
\end{align*} 
 \par
On the other hand,   for all $k_1,k_2 \in K$, 
\begin{align*}
d_{X^n}^{\text{Ham}}(\psi(k_1k_2),\psi(k_1)\psi(k_2))
&= d_{X^n}^{\text{Ham}}(\Delta(\varphi(k_1k_2)),\Delta(\varphi(k_1))\Delta(\varphi(k_2)))  \\
 &= d_{X^n}^{\text{Ham}}(\Delta(\varphi(k_1k_2)),\Delta(\varphi(k_1) \varphi(k_2))) \\
  &= 1 -   \left(1 - d_{X}^{\text{Ham}}(\varphi(k_1 k_2),\varphi(k_1) \varphi(k_2))\right)^n  
&& \text{(by \eqref{e;hamming-product})} \\
&\leq 1 - \left(1 - \varepsilon'\right)^n \\
% && \text{(since $\varphi$ is a $(K,\varepsilon')$-morphism)} \\
& \leq \varepsilon 
&& \text{(by \eqref{e:condition-on-epsilon-n})}.
\end{align*} 
Moreover, we also have
\begin{align*}
d_{X^n}^{\text{Ham}}(\psi(1_M),\Id_{X^n})
&= d_{X^n}^{\text{Ham}}(\Delta(\varphi(1_M)),\Id_{X^n}) 
 \\
 &= d_{X^n}^{\text{Ham}}(\Delta(\varphi(1_M)),\Delta(\Id_X)) \\
   &= 1 -   \left(1 - d_{X}^{\text{Ham}}(\varphi(1_M),\Id_X)\right)^n  
&& \text{(by \eqref{e;hamming-product})} \\
&\leq 1 - \left(1 - \varepsilon'\right)^n \\
% && \text{(since $\varphi$ is a $(K,\varepsilon')$-morphism)} \\
& \leq \varepsilon 
&& \text{(by \eqref{e:condition-on-epsilon-n})}.
\end{align*}
We deduce that  the map  $\psi \colon M \to \Map(X^n)$ is a $(K,\varepsilon)$-injective   
$(K,\varepsilon)$-morphism. 
This shows that (c) implies (a).
\end{proof}

\begin{proposition}
\label{p:sofic-groups-monoids}
Let $G$ be a group.
Then $G$ is sofic as a group if and only if it is sofic as a monoid.
\end{proposition}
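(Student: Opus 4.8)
The plan is to prove the two implications separately; the forward one is formal and the converse carries the content.

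If $G$ is sofic as a group, then for every finite $K \subset G$ containing $1_G$ and every $\varepsilon > 0$ the classical definition supplies a finite set $X$ and a $(K,\varepsilon)$-morphism $\varphi \colon G \to \Sym(X)$ that is $(K, 1-\varepsilon)$-injective and satisfies $\varphi(1_G) = \Id_X$. Since $\Sym(X)$ is a submonoid of $\Map(X)$ on which the Hamming metric of $\Map(X)$ restricts to the usual one, the very same $\varphi$, now regarded as a map into $\Map(X)$, witnesses soficity of $G$ as a monoid for $(K,\varepsilon)$. First I would simply record this.

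For the converse, suppose $G$ is sofic as a monoid, fix a finite $K \subset G$ and $\varepsilon > 0$, and enlarge $K$ to a finite set $K'$ containing $1_G$ that is closed under inversion and under the finitely many products $gh$ with $g,h \in K$ (together with their inverses). For a small $\varepsilon' > 0$ to be fixed at the end, Proposition~\ref{p:equiv-def-sofic} yields a finite set $X$ and a $(K', 1-\varepsilon')$-injective $(K', \varepsilon')$-morphism $\varphi \colon G \to \Map(X)$. The key observation is that each $\varphi(w)$ with $w, w^{-1} \in K'$ is \emph{almost invertible}: combining $d(\varphi(w^{-1})\varphi(w), \varphi(1_G)) \leq \varepsilon'$ with $d(\varphi(1_G), \Id_X) \leq \varepsilon'$ shows that $\varphi(w^{-1}) \circ \varphi(w)$ coincides with $\Id_X$ on a set $X_w$ of density at least $1 - 2\varepsilon'$; hence $\varphi(w)$ is injective on $X_w$. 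A counting argument then extends $\varphi(w)|_{X_w}$ to a permutation $\sigma_w \in \Sym(X)$ by matching the equinumerous complements $X \setminus X_w$ and $X \setminus \varphi(w)(X_w)$ arbitrarily, giving $d(\varphi(w), \sigma_w) \leq 2\varepsilon'$.

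Defining $\psi(w) := \sigma_w$ for $w \in K'$ (with $\psi(1_G) = \Id_X$), I would check that $\psi$ is a $(K,\varepsilon)$-morphism and $(K, 1-\varepsilon)$-injective map into $\Sym(X)$, which is exactly soficity of $G$ as a group. Injectivity is immediate: $d(\sigma_g, \sigma_h) \geq d(\varphi(g), \varphi(h)) - 4\varepsilon' \geq 1 - 5\varepsilon'$ for distinct $g,h \in K$. The main obstacle is the morphism estimate $d(\sigma_{gh}, \sigma_g \sigma_h) \leq \varepsilon$. Left composition by the bijection $\sigma_g$ is a Hamming isometry, so $d(\sigma_g \varphi(h), \sigma_g \sigma_h) = d(\varphi(h), \sigma_h) \leq 2\varepsilon'$ is harmless; the subtle term is $d(\varphi(g)\varphi(h), \sigma_g\varphi(h))$, since right composition by the non-bijective $\varphi(h)$ need not be contractive. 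I would control it using that $\varphi(h)$ is itself $2\varepsilon'$-close to the permutation $\sigma_h$: for $B := \{y : \varphi(g)(y) \neq \sigma_g(y)\}$, the preimage $\varphi(h)^{-1}(B)$ sits inside the union of $\{x : \varphi(h)(x) \neq \sigma_h(x)\}$ and $\sigma_h^{-1}(B)$, so it has density at most $4\varepsilon'$. Together with $d(\varphi(gh), \varphi(g)\varphi(h)) \leq \varepsilon'$ (valid as $gh \in K'$) this gives $d(\sigma_{gh}, \sigma_g\sigma_h) \leq 9\varepsilon'$, and taking $\varepsilon' \leq \varepsilon/9$ finishes the proof.
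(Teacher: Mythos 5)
Your argument is correct, but it is organized quite differently from the paper's. The paper disposes of this proposition in three lines by matching Definitions~\ref{d:almost-momorphism} and \ref{d:sofic-semigroup} against Definitions~1.1--1.2 of Elek--Szab\'o and invoking their Lemma~2.1 for the substantive implication; what you have written is, in effect, a self-contained proof of that cited lemma. The technical heart of your converse --- enlarging $K$ to an inversion-closed set, observing that $d(\varphi(w^{-1})\varphi(w),\Id_X)\leq 2\varepsilon'$ forces $\varphi(w)$ to be injective on a set of density $1-2\varepsilon'$, and repairing it to a permutation $\sigma_w$ with $d(\varphi(w),\sigma_w)\leq 2\varepsilon'$ --- is exactly the mechanism hidden in the reference, and your handling of the one genuinely delicate estimate (bounding $d(\varphi(g)\varphi(h),\sigma_g\varphi(h))$ by noting that $\varphi(h)^{-1}(B)\subset\{x:\varphi(h)(x)\neq\sigma_h(x)\}\cup\sigma_h^{-1}(B)$, so that the non-injectivity of $\varphi(h)$ cannot inflate the bad set beyond density $4\varepsilon'$) is precisely the point where a careless write-up would fail; you got it right. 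The constants check out: $9\varepsilon'$ for the multiplicativity defect and $1-5\varepsilon'$ for separation, so $\varepsilon'\leq\varepsilon/9$ suffices for both. What your version buys is independence from the external reference and a clear display of where group inverses enter --- which illuminates why no such repair is available for, say, the bicyclic monoid, where $\varphi(p)\varphi(q)$ is close to $\Id_X$ but $\varphi(q)\varphi(p)$ must then also be, by Lemma~\ref{e:fg-close-implies-gf-close}, which is the obstruction exploited in Theorem~\ref{t:bicyclic-not-sofic}. What the paper's citation buys is brevity. The only cosmetic caveats: you should say explicitly which definition of sofic group you take as the target in the converse (the $(K,1-\varepsilon)$-separation version, which is the strongest, so no loss), and note that the forward direction needs only the restriction observation that the Hamming metric on $\Sym(X)$ is induced from $\Map(X)$ --- both of which you do address.
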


\begin{proof}
The fact that any group that is sofic as a monoid is also sofic as a group
is clear if we compare our Definition~\ref{d:almost-momorphism} and 
Definition~\ref{d:sofic-semigroup} above with Definition~1.1 and Definition~1.2   in 
\cite{elek-on-sofic-groups}. The converse implication, namely that any group that is sofic as a group is also sofic as a monoid, follows from  our definitions and Lemma~2.1 in \cite{elek-on-sofic-groups}.
 \end{proof}

\begin{proposition}
\label{p;subgroups-sofic}
Every submonoid of a sofic monoid is sofic.
\end{proposition}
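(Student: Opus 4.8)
The plan is to show that soficity passes to submonoids by a plain restriction argument, exploiting the fact that the defining condition of soficity (Definition~\ref{d:sofic-semigroup}) only requires the \emph{existence} of suitable maps into symmetric monoids $\Map(X)$, with no surjectivity or global structural constraint imposed on them. So let $M$ be a sofic monoid and let $N \subseteq M$ be a submonoid; I want to verify that $N$ itself satisfies the condition of Definition~\ref{d:sofic-semigroup}.

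First I would fix a finite subset $K \subseteq N$ and a real number $\varepsilon > 0$. Since $N \subseteq M$, the set $K$ is also a finite subset of $M$, and soficity of $M$ provides a non-empty finite set $X$ together with a $(K,1-\varepsilon)$-injective $(K,\varepsilon)$-morphism $\psi \colon M \to \Map(X)$. The candidate witnessing the soficity of $N$ is then simply the restriction $\varphi := \psi|_N \colon N \to \Map(X)$.

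Next I would check that $\varphi$ is again a $(K,1-\varepsilon)$-injective $(K,\varepsilon)$-morphism, now regarded as a map out of $N$. The key point is that $N$ is a submonoid of $M$: its binary operation is the restriction of that of $M$, so for $k_1,k_2 \in K \subseteq N$ the product $k_1 k_2$ is the same whether computed in $N$ or in $M$, and moreover $1_N = 1_M$ by the very definition of a submonoid. Consequently the inequalities
\[
 d_X^{\text{Ham}}(\varphi(k_1 k_2),\varphi(k_1)\varphi(k_2)) \leq \varepsilon, \qquad
 d_X^{\text{Ham}}(\varphi(1_N),\Id_X) \leq \varepsilon,
\]
together with $d_X^{\text{Ham}}(\varphi(k_1),\varphi(k_2)) \geq 1-\varepsilon$ for all distinct $k_1,k_2 \in K$, are literally the corresponding inequalities for $\psi$ evaluated on $K$, all of which hold by hypothesis. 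Thus $\varphi$ establishes the required condition for $N$, and $N$ is sofic.

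I do not expect any genuine obstacle here: the argument reduces to a one-line restriction, and the only thing to be careful about is the routine bookkeeping that $1_N = 1_M$ and that products of elements of $K$ agree in $N$ and in $M$ — both immediate from the notion of submonoid. This stands in contrast to potential closure properties of soficity under quotients or extensions, which would instead require constructing genuinely new approximating maps rather than merely restricting an existing one.
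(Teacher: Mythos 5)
Your proof is correct and is essentially identical to the paper's own argument: both restrict the $(K,1-\varepsilon)$-injective $(K,\varepsilon)$-morphism $\psi\colon M\to\Map(X)$ furnished by the soficity of $M$ to the submonoid $N$, using that $1_N=1_M$ and that products of elements of $K$ agree in $N$ and $M$. Nothing further is needed.
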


\begin{proof}
Let $M$ be a sofic monoid and $N$  a submonoid of $M$. 
Fix a finite subset $K \subset N$ and
$\varepsilon > 0$. As $M$ is sofic, there exists a non-empty finite  set $X$ and  a
$(K,1- \varepsilon)$-injective  $(K,\varepsilon)$-morphism $\varphi\colon M \to \Map(X)$. Then the restriction map
$\varphi\vert_N \colon N \to \Map(X)$ is a 
$(K,1 - \varepsilon)$-injective $(K,\varepsilon)$-morphism. 
This shows that the monoid $N$ is sofic.
\end{proof}

\begin{proposition}
\label{p:locally-sofic}
Every locally sofic monoid is sofic.
\end{proposition}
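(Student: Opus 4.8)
The plan is to exploit the fact that the defining condition for soficity constrains a map $\varphi$ only on the finite set $K$, on the finitely many products $k_1 k_2$ with $k_1,k_2 \in K$, and on the identity $1_M$. All of these elements lie in the submonoid of $M$ generated by $K$, which is finitely generated and hence sofic by hypothesis; so I would produce a good map on this submonoid and then extend it arbitrarily to $M$.

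Concretely, I would fix a finite subset $K \subset M$ and $\varepsilon > 0$, and let $N \subset M$ be the submonoid generated by $K$. Since $K$ is finite, $N$ is finitely generated, and therefore sofic because $M$ is locally sofic. Applying Definition~\ref{d:sofic-semigroup} to $N$, to the finite subset $K \subset N$, and to $\varepsilon$, I would obtain a non-empty finite set $X$ together with a $(K,1-\varepsilon)$-injective $(K,\varepsilon)$-morphism $\psi \colon N \to \Map(X)$.

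The remaining step is to transport $\psi$ to $M$: I would define $\varphi \colon M \to \Map(X)$ by $\varphi(m) := \psi(m)$ for $m \in N$ and, say, $\varphi(m) := \Id_X$ for $m \in M \setminus N$, the choice outside $N$ being immaterial. Because $K \subset N$, $1_M = 1_N \in N$, and $k_1 k_2 \in N$ for all $k_1,k_2 \in K$, every value of $\varphi$ occurring in the conditions of Definition~\ref{d:almost-momorphism} agrees with the corresponding value of $\psi$; thus $\varphi$ inherits from $\psi$ the estimates $d_X^{\text{Ham}}(\varphi(k_1 k_2),\varphi(k_1)\varphi(k_2)) \leq \varepsilon$, $d_X^{\text{Ham}}(\varphi(1_M),\Id_X) \leq \varepsilon$, and $d_X^{\text{Ham}}(\varphi(k_1),\varphi(k_2)) \geq 1-\varepsilon$ for distinct $k_1,k_2 \in K$. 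Hence $\varphi$ is a $(K,1-\varepsilon)$-injective $(K,\varepsilon)$-morphism from $M$ to $\Map(X)$, and since $K$ and $\varepsilon$ are arbitrary, this shows $M$ is sofic.

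I do not expect any genuine obstacle: the argument amounts to the observation that soficity is a condition involving only finitely many elements at once, so it can be detected on finitely generated submonoids. The one point to keep straight is that the chosen finitely generated submonoid must contain not merely $K$ but also all products $k_1 k_2$ and the identity element, which is exactly why I take the submonoid \emph{generated} by $K$ rather than $K$ itself.
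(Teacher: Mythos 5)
Your argument is correct and is essentially identical to the paper's proof: both pass to the submonoid $N$ generated by $K$, invoke soficity of $N$ (which holds since $N$ is finitely generated and $M$ is locally sofic) to get a $(K,1-\varepsilon)$-injective $(K,\varepsilon)$-morphism $\psi\colon N\to\Map(X)$, and extend $\psi$ arbitrarily to $M$. Your added remark that all the relevant elements ($K$, the products $k_1k_2$, and $1_M$) lie in $N$ is exactly the point that makes the arbitrary extension harmless.
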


\begin{proof}
Let $M$ be a locally sofic monoid. Let $K \subset M$ be a finite subset and $\varepsilon > 0$.
Denote by $N$ the submonoid of $M$ generated by $K$. As $N$ is sofic, there exist a 
non-empty finite  set $X$ and
a $(K,1 - \varepsilon)$-injective $(K,\varepsilon)$-morphism $\psi \colon N \to \Map(X)$.
By extending arbitrarily $\psi$ to $M$,  
we get a $(K,1 - \varepsilon)$-injective  $(K,\varepsilon)$-morphism $\varphi \colon M \to \Map(X)$. 
This shows that $M$ is sofic.
 \end{proof}

\begin{proposition}
\label{p:prod-sofic}
Let $(M_i)_{i\in I}$ be a family of sofic monoids. 
Then  the  product monoid $M := \prod_{i \in I}M_i$ is also sofic.
\end{proposition}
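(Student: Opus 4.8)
Prove that a product of sofic monoids is sofic.

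Let me think about this carefully.\begin{proof}[Proof proposal]
The plan is to reduce the possibly infinite product to a finite one and then feed the data into the Hamming product formula \eqref{e;hamming-product}. Fix a finite subset $K \subset M$ and $\varepsilon > 0$. Since two elements of $M = \prod_{i \in I} M_i$ agree exactly when all their coordinates agree, each pair of distinct elements of $K$ differs in at least one index; as $K$ is finite there are only finitely many such pairs, so selecting one separating index per pair produces a \emph{finite} subset $I_0 \subset I$ (which we may take to be non-empty, adjoining an arbitrary index if needed) such that the coordinate projections $\pi_i \colon M \to M_i$, $i \in I_0$, separate the points of $K$. Write $n := |I_0|$ and $K_i := \pi_i(K) \subset M_i$, a finite set for each $i \in I_0$.

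Next I would fix the approximation parameters. Choose $\delta > 0$ small enough that simultaneously $1 - (1-\delta)^n \leq \varepsilon$ and $\delta \leq \varepsilon$. Since each $M_i$ is sofic, for every $i \in I_0$ there exist a non-empty finite set $X_i$ and a $(K_i, 1-\delta)$-injective $(K_i,\delta)$-morphism $\varphi_i \colon M_i \to \Map(X_i)$. Set $X := \prod_{i \in I_0} X_i$ and let $\Phi \colon \prod_{i \in I_0}\Map(X_i) \to \Map(X)$ be the natural morphism introduced above; note that $\Phi$ is in fact a monoid morphism, since $\Phi\big((\Id_{X_i})_{i \in I_0}\big) = \Id_X$. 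I then define $\varphi \colon M \to \Map(X)$ by $\varphi := \Phi \circ \big( (\varphi_i \circ \pi_i)_{i \in I_0} \big)$, i.e. $\varphi(x) = \Phi\big( (\varphi_i(\pi_i(x)))_{i \in I_0}\big)$. In contrast with the locally sofic case, no arbitrary extension is required here, because the projections $\pi_i$ are globally defined on $M$.

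It remains to check that $\varphi$ is a $(K,1-\varepsilon)$-injective $(K,\varepsilon)$-morphism. For the morphism conditions, since each $\pi_i$ is a monoid morphism and $\Phi$ is a monoid morphism, both $\varphi(k_1k_2)$ and $\varphi(k_1)\varphi(k_2)$ are images under $\Phi$ of tuples whose $i$-th coordinates are $\varphi_i(\pi_i(k_1)\pi_i(k_2))$ and $\varphi_i(\pi_i(k_1))\varphi_i(\pi_i(k_2))$; as $\pi_i(k_1),\pi_i(k_2) \in K_i$, these differ in the Hamming metric of $\Map(X_i)$ by at most $\delta$, so \eqref{e;hamming-product} bounds $d_X^{\text{Ham}}(\varphi(k_1k_2),\varphi(k_1)\varphi(k_2))$ by $1 - (1-\delta)^n \leq \varepsilon$. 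The same computation applied to $\varphi(1_M) = \Phi\big((\varphi_i(1_{M_i}))_{i \in I_0}\big)$ and $\Id_X = \Phi\big((\Id_{X_i})_{i \in I_0}\big)$ gives $d_X^{\text{Ham}}(\varphi(1_M),\Id_X) \leq \varepsilon$. For injectivity, given distinct $k_1,k_2 \in K$, there is by construction an index $i_0 \in I_0$ with $\pi_{i_0}(k_1) \neq \pi_{i_0}(k_2)$; the $(K_{i_0},1-\delta)$-injectivity of $\varphi_{i_0}$ makes the $i_0$-th factor in \eqref{e;hamming-product} at most $\delta$, while all remaining factors are at most $1$, so the product is at most $\delta$ and hence $d_X^{\text{Ham}}(\varphi(k_1),\varphi(k_2)) \geq 1 - \delta \geq 1 - \varepsilon$.

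The only genuinely non-routine step is the first one, namely passing from the possibly infinite index set $I$ to the finite subset $I_0$; this is precisely where the finiteness of $K$ is indispensable. Everything afterwards is a direct computation with the product formula \eqref{e;hamming-product} and the defining inequalities for $\varphi_i$.
\end{proof}
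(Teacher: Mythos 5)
Your proof is correct and follows essentially the same route as the paper's: reduce to a finite subset $I_0 \subset I$ of coordinates separating the points of $K$, choose the soficity parameter $\delta$ for the factors so that $1-(1-\delta)^{|I_0|} \le \varepsilon$ and $\delta \le \varepsilon$, and combine the approximating maps via the product formula \eqref{e;hamming-product}. The estimates for the morphism defect, the identity, and the injectivity (using the single separating coordinate $i_0$) match the paper's computation.
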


\begin{proof}
For each $i \in I$, let  $\pi_i \colon M \to M_i$ denote the
projection  morphism.
Fix a finite subset $K \subset M$ and $\varepsilon > 0$.
Then there exists a finite subset
$J \subset I$ such that the projection 
$\pi_J   \colon M \to M_J := \prod_{j \in J}M_j$ is
injective on $K$. Choose a constant $0 < \eta < 1$  small enough so that
\begin{equation}
\label{e;eta-1}
1 - (1 - \eta)^{|J|} \leq \varepsilon
\end{equation}
and $\eta\leq \varepsilon$.
\par
 Since the monoid $M_j$ is sofic for each $j\in J$,
there exist a nonempty finite set $X_j$ and a $(\pi_j(K),1 - \eta)$-injective $(\pi_j(K), \eta)$-morphism
$\varphi_j \colon M_j \to \Map(X_j)$.
Consider the nonempty finite set $X := \prod_{j \in J}X_j$ and the map $\varphi \colon M \to \Map(X)$ defined by
$$
\varphi(m)(x) := (\varphi_j(m_j)(x_j))_{j \in J}
$$
for all $m = (m_i)_{i \in I} \in M$ and $x = (x_j)_{j \in J} \in X$.
For all $k= (k_i)_{i \in I},k' = (k_i')_{i \in I} \in K$, we have 
\begin{align*}
d_X^{\text{Ham}}(\varphi(kk'), \varphi(k)\varphi(k'))  & =
1 - \prod_{j \in J} \left(1 - d_{X_j}^{\text{Ham}}(\varphi_j(k_jk_j'),\varphi_j(k_j)\varphi_j(k_j'))\right) 
&& \text{(by \eqref{e;hamming-product})} \\  
& \leq 1 - (1 - \eta)^{|J|} \\
% && \text{(since $\varphi_j$ is a $(K_j,\eta)$-morphism for all $j \in J$)} \\ 
& \leq \varepsilon && \text{(by \eqref{e;eta-1})}.
\end{align*}
We also have
\begin{align*}
d_X^{\text{Ham}}(\varphi(1_M), \Id_X)  & =
1 - \prod_{j \in J} \left(1 - d_{X_j}^{\text{Ham}}(\varphi_j(1_{M_j}),\Id_{X_j})\right) 
&& \text{(by \eqref{e;hamming-product})} \\  
& \leq 1 - (1 - \eta)^{|J|} \\
% && \text{(since $\varphi_j$ is a $(K_j,\eta)$-morphism for all $j \in J$)} \\ 
& \leq \varepsilon && \text{(by \eqref{e;eta-1})}.
\end{align*}
On the other hand, if $k$ and $k'$  are distinct elements in $K$, then there exists $j_0 \in J$ such that $k_{j_0} \neq k_{j_0}'$. This 
implies 
\begin{align*}
d_X^{\text{Ham}}(\varphi(k), \varphi(k')) 
&= 1 - \prod_{j \in J}\left(1 - d_{X_j}^{\text{Ham}}(\varphi_j(k_j), \varphi_j(k_j'))\right) 
&& \text{(by \eqref{e;hamming-product})} \\  
&\geq 1 - \left(1 - d_{X_{j_0}}^{\text{Ham}}(\varphi_{j_0}(k_{j_0}),\varphi_{j_0}(k_{j_0}'))\right) \\ 
&  \geq 1 -  \eta 
&& \text{(since $\varphi_{j_0}$ is  $(K_{j_0},1 - \eta)$-injective)} \\ 
& \geq 1 - \varepsilon && \text{(since $\eta \leq \varepsilon$)}. 
\end{align*}
It follows that $\varphi$ is a $(K,1 - \varepsilon)$-injective $(K,\varepsilon)$-morphism. 
This shows that $M$ is sofic.
\end{proof}

\begin{corollary}
Let $(M_i)_{i \in I}$ be a family of sofic monoids.
Then their direct sum
$M = \oplus_{i \in I} M_i$ is also sofic.
\end{corollary}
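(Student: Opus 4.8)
The plan is to reduce the statement to the two structural results already established, namely Proposition~\ref{p:prod-sofic} (soficity is preserved under arbitrary direct products) and Proposition~\ref{p;subgroups-sofic} (every submonoid of a sofic monoid is sofic). The key observation is that the direct sum $\oplus_{i \in I} M_i$ is, by definition, the subset of the direct product $\prod_{i \in I} M_i$ consisting of those families $(m_i)_{i \in I}$ for which $m_i = 1_{M_i}$ for all but finitely many indices $i \in I$. So the whole argument rests on recognizing $\oplus_{i \in I} M_i$ as a submonoid of $\prod_{i \in I} M_i$ and then invoking the two propositions in sequence.

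First I would verify that $\oplus_{i \in I} M_i$ is indeed a submonoid of $\prod_{i \in I} M_i$. The identity element of the product is the family all of whose coordinates equal $1_{M_i}$, and this trivially lies in the direct sum (its support is empty). For closure under the operation, if $m = (m_i)_{i \in I}$ and $m' = (m_i')_{i \in I}$ both have cofinite identity support, say $m_i = 1_{M_i}$ for $i \notin J$ and $m_i' = 1_{M_i}$ for $i \notin J'$ with $J, J' \subset I$ finite, then for every $i \notin J \cup J'$ one has $(mm')_i = m_i m_i' = 1_{M_i} 1_{M_i} = 1_{M_i}$; since $J \cup J'$ is finite, $mm'$ again lies in the direct sum. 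Thus $\oplus_{i \in I} M_i$ is a submonoid of $\prod_{i \in I} M_i$.

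The remaining steps are immediate. Since each $M_i$ is sofic by hypothesis, Proposition~\ref{p:prod-sofic} shows that the product monoid $\prod_{i \in I} M_i$ is sofic. Then, because $\oplus_{i \in I} M_i$ is a submonoid of this sofic monoid, Proposition~\ref{p;subgroups-sofic} shows that $\oplus_{i \in I} M_i$ is itself sofic, which is the desired conclusion.

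I do not expect any genuine obstacle here: the result is a formal corollary, and the only point requiring a moment's care is the submonoid verification above, particularly the closure computation, which works precisely because the product of two identity elements is again the identity. Everything else is a direct application of previously established facts.
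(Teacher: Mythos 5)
Your proof is correct and follows exactly the paper's argument: identify $\oplus_{i \in I} M_i$ as a submonoid of $\prod_{i \in I} M_i$ and apply Proposition~\ref{p:prod-sofic} followed by Proposition~\ref{p;subgroups-sofic}. The explicit verification that the direct sum is a submonoid is a harmless addition that the paper leaves implicit.
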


\begin{proof}
This immediately follows from Proposition~\ref{p;subgroups-sofic} and Proposition~\ref{p:prod-sofic}
since $M = \oplus_{i \in I} M_i$ is a submonoid of the product monoid $\prod_{i \in I} M_i$. 
\end{proof}
\begin{corollary}
\label{c:projective-sofic}
If a monoid $M$ is the limit of a projective system of sofic monoids then $M$ is sofic.
\end{corollary}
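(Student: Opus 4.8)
The plan is to realize $M$ as a submonoid of the product $\prod_{i \in I} M_i$ and then invoke the two results already established, namely Proposition~\ref{p;subgroups-sofic} (submonoids of sofic monoids are sofic) and Proposition~\ref{p:prod-sofic} (products of sofic monoids are sofic). Recall that if $M = \varprojlim M_i$ is the projective limit of a projective system $(M_i, \varphi_{ij})$ of sofic monoids, where $\varphi_{ij} \colon M_j \to M_i$ are the bonding morphisms for $i \leq j$, then by the very construction of a projective limit of monoids, $M$ is canonically a submonoid of $\prod_{i \in I} M_i$. Concretely, $M$ is the set of all compatible families $(m_i)_{i \in I} \in \prod_{i \in I} M_i$ satisfying $\varphi_{ij}(m_j) = m_i$ whenever $i \leq j$, and the monoid operation on $M$ is the one inherited from the componentwise product.

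First I would record that $\prod_{i \in I} M_i$ is a monoid under componentwise multiplication, with identity $(1_{M_i})_{i \in I}$, and that the compatibility conditions defining the projective limit are preserved under this operation: if $(m_i)_{i \in I}$ and $(m_i')_{i \in I}$ are compatible families then so is their product $(m_i m_i')_{i \in I}$, since each $\varphi_{ij}$ is a monoid morphism and hence $\varphi_{ij}(m_j m_j') = \varphi_{ij}(m_j)\varphi_{ij}(m_j') = m_i m_i'$. The identity family is trivially compatible. This verifies that $M$ is genuinely a submonoid of $\prod_{i \in I} M_i$, not merely a subset.

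With this identification in hand, the conclusion is immediate. By Proposition~\ref{p:prod-sofic}, the product monoid $\prod_{i \in I} M_i$ is sofic, being a product of the sofic monoids $M_i$. Since $M$ is a submonoid of this sofic monoid, Proposition~\ref{p;subgroups-sofic} yields at once that $M$ is sofic. This is essentially the same two-line argument used to deduce that direct sums of sofic monoids are sofic in the preceding corollary.

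I do not expect any serious obstacle here; the content of the statement is entirely formal once one unwinds the definition of a projective limit. The only point requiring a modicum of care is the verification that the projective limit really does embed as a \emph{submonoid} (closed under the operation and containing the identity) of the product, rather than as some less structured subset, but this follows directly from the fact that the bonding maps are monoid morphisms, as indicated above. If the paper has not already fixed a precise convention for the projective limit of monoids, it would be worth stating in one clause that $M$ denotes the set of compatible families with the componentwise operation, so that the embedding into $\prod_{i \in I} M_i$ is unambiguous.
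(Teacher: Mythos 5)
Your proposal is correct and follows exactly the same route as the paper: the paper's proof simply observes that the projective limit is a submonoid of the product $\prod_{i \in I} M_i$ and then cites Proposition~\ref{p;subgroups-sofic} and Proposition~\ref{p:prod-sofic}. Your additional verification that the set of compatible families is closed under the componentwise operation and contains the identity is a worthwhile (if routine) elaboration of a step the paper leaves implicit.
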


\begin{proof}
If $M$  is the limit of a projective system of monoids $(M_i)_{i \in I}$ then $M$ is a submonoid of the product $\prod_{i \in I} M_i$.
Thus, it follows from Proposition~\ref{p;subgroups-sofic} and Proposition~\ref{p:prod-sofic} that $M$ is sofic 
if every $M_i$, $i \in I$, is sofic.
\end{proof}

Recall that an inductive system of monoids, denoted $(M_i,\psi_{ji})$, consists of the following data: a directed set $I$, a family $(M_i)_{i \in I}$ of monoids and, for all $i, j \in I$ such that $i < j$, a monoid homomorphism $\psi_{ji} \colon M_i \to M_j$. Moreover these homomorphisms must satisfy $\psi_{ii} = \Id_{M_i}$ and $\psi_{kj} \circ \psi_{ji} = \psi_{ki}$ for all $i < j < k$ in $I$. Then the associated limit is the monoid $M:= (\coprod_{i \in I} M_i)/\sim$ (here $\coprod$ denotes a disjoint union of sets) where $\sim$ is the equivalence relation on $\coprod_{i \in I} M_i$ defined as follows: for $x_i \in M_i$ and $x_j \in M_j$, $i,j \in I$, one has $x_i \sim x_j$ provided there exists $\ell \in I$ such that $i \leq \ell$ and $j \leq \ell$ and $\psi_{\ell i}(x_i) = \psi_{\ell j}(x_j)$ in $M_\ell$. Denoting by $[x_i]:= \{x_j \in M_j: x_i \sim x_j, j \in I\} \in M$ the equivalence class of $x_i \in M_i$, the multiplication in $M$ is defined by $[x_i][y_j]:= [\psi_{\ell i}(x_i)\psi_{\ell j}(y_j)]$, where $\ell \in I$ is such that $i \leq \ell$ and $j \leq \ell$. This means that the canonical map $x_i \mapsto [x_i]$ is a monoid homomorphism from
$M_i$ into $M$ for all $i \in I$. 

\begin{proposition}
\label{p:inductive-sofic}
If a monoid $M$ is the limit of an inductive system of sofic monoids then $M$ is sofic.
\end{proposition}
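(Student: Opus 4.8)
The plan is to reduce soficity of the limit monoid $M$ to soficity of a single monoid $M_m$ of the system. For a fixed finite $K\subset M$ and $\varepsilon>0$ I would construct a set-theoretic section $s\colon M\to M_m$ that is injective and multiplicative on $K$, and then compose it with a sofic approximation of $M_m$. Throughout, denote by $\iota_i\colon M_i\to M$, $\iota_i(x)=[x]$, the canonical monoid homomorphisms; since $[x]=[\psi_{ji}(x)]$ one has $\iota_i=\iota_j\circ\psi_{ji}$ whenever $i\le j$.

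First I would lift $K$. As $K$ is finite and $I$ is directed, there are an index $\ell\in I$ and elements $y_k\in M_\ell$ ($k\in K$) with $\iota_\ell(y_k)=k$; I set $y_{1_M}:=1_{M_\ell}$ and, if $1_M\in K$, take this as the lift of $1_M$. Since distinct elements of $K$ have distinct images under $\iota_\ell$, the map $k\mapsto y_k$ is injective. The obstruction is that this lift need not respect products: if $k_1k_2=k_1'k_2'$ in $M$, the elements $y_{k_1}y_{k_2}$ and $y_{k_1'}y_{k_2'}$ of $M_\ell$ have the same image under $\iota_\ell$ but need not be equal in $M_\ell$.

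This is where the main work lies, and it is resolved by pushing forward. Because $\iota_\ell(y_{k_1}y_{k_2})=k_1k_2$, any two of the finitely many products $y_{k_1}y_{k_2}$ that represent the same class of $M$, as well as any product $y_{k}y_{k'}$ representing some $p\in K$ (represented also by $y_p$) and the pair $1_{M_\ell}$, $y_{1_M}$, are identified by $\sim$; hence each such coincidence is realised after applying a suitable $\psi_{n\ell}$. Only finitely many coincidences must be enforced, so directedness of $I$ yields a single index $m\ge\ell$ such that, writing $\hat y_k:=\psi_{m\ell}(y_k)\in M_m$, the product $\hat y_{k_1}\hat y_{k_2}=\psi_{m\ell}(y_{k_1}y_{k_2})$ depends only on the class $k_1k_2\in M$, equals $\hat y_p$ when $k_1k_2=p\in K$, and equals $1_{M_m}$ when $k_1k_2=1_M$; moreover $\hat y_{1_M}=1_{M_m}$.

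Now I would define $s\colon M\to M_m$ by $s(1_M):=1_{M_m}$, $s(k):=\hat y_k$ for $k\in K$, $s(k_1k_2):=\hat y_{k_1}\hat y_{k_2}$ for every product of two elements of $K$ (well defined by the previous paragraph), and arbitrarily elsewhere. Using soficity of $M_m$ I pick a non-empty finite set $X$ and a $(\hat K,1-\varepsilon)$-injective $(\hat K,\varepsilon)$-morphism $\varphi_m\colon M_m\to\Map(X)$, where $\hat K:=\{\hat y_k:k\in K\}$, and set $\varphi:=\varphi_m\circ s$. Then $\varphi(1_M)=\varphi_m(1_{M_m})$ is at Hamming distance $\le\varepsilon$ from $\Id_X$; for distinct $k_1,k_2\in K$ the lifts $\hat y_{k_1},\hat y_{k_2}\in\hat K$ are distinct, since $\iota_m(\hat y_{k_i})=\iota_\ell(y_{k_i})=k_i$, whence $d^{\text{Ham}}_X(\varphi(k_1),\varphi(k_2))\ge 1-\varepsilon$; and for all $k_1,k_2\in K$ one has $\varphi(k_1k_2)=\varphi_m(\hat y_{k_1}\hat y_{k_2})$ while $\varphi(k_1)\varphi(k_2)=\varphi_m(\hat y_{k_1})\varphi_m(\hat y_{k_2})$, which are at distance $\le\varepsilon$ because $\varphi_m$ is a $(\hat K,\varepsilon)$-morphism and $\hat y_{k_1},\hat y_{k_2}\in\hat K$. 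Thus $\varphi$ is a $(K,1-\varepsilon)$-injective $(K,\varepsilon)$-morphism, proving $M$ sofic. The hard part is precisely the passage to a single index $m$ at which the lift of $K$ becomes multiplicative; everything afterwards is a direct transfer of the soficity data of $M_m$ through $s$.
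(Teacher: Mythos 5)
Your proof is correct and follows essentially the same route as the paper's: using directedness you lift $K$ (and its pairwise products) to a single monoid of the system, push forward far enough that the finitely many product coincidences are realised, and then transport a sofic approximation of that monoid back to $M$. The only cosmetic difference is that the paper lifts all of $H=K\cup K^2$ at once and arranges $(kk')_\ell=k_\ell k'_\ell$, while you lift $K$ and define the lifts of products as actual products; these are the same argument.
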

\begin{proof}
Let $(M_i,\psi_{ji})$ be an inductive system of sofic monoids and denote by $M$ its limit. 
Let $K \subset M$ ba a finite set and $\varepsilon > 0$.
Let us set $H:= K \cup K^2$. For every $h \in H$ we can find $i = i(h) \in I$ and $h_i \in M_i$ such that $h = [h_i]$. Let $\kappa \in I$ be such that $i(h) \leq \kappa$ for all $h \in H$. 
We then set $h_{\kappa}:= \psi_{\kappa i}(h_i) \in M_{\kappa}$ for all $h \in H$.
Thus $h = [h_{\kappa}]$ for all $h \in H$. Let now $k,k' \in K$. We have $kk' \in H$ and $kk'= [k_{\kappa}][k'_{\kappa}] = [k_{\kappa} k'_\kappa]$. Thus we can find $j = j(k,k') \in I$ such that $\kappa \leq j$ and $\psi_{j \kappa}((kk')_\kappa) = \psi_{j \kappa}(k_\kappa k'_\kappa)$ in $M_j$. 
Let $\ell \in I$ be such that $j(k,k') \leq \ell$ for all $k,k' \in K$ and set
$h_\ell:= \psi_{\ell \kappa}(h_\kappa) \in M_\ell$ for all $h \in H$.
Again, we have $h = [h_\ell]$ for all $h \in H$.  Also note that $(kk')_\ell = k_\ell 
k'_\ell$ for all $k,k' \in K$. Finally, we set $K_\ell:= \{k_\ell: k \in K\}\subset M_\ell$.

Since $M_\ell$ is sofic, we can find a non-empty finite set $X$ and a $(K_\ell,1-\varepsilon)$-injective $(K_\ell,\varepsilon)$-morphism $\varphi \colon M_\ell \to \Map(X)$. We then define a map $\overline{\varphi} \colon M \to \Map(X)$ by setting
\[
\overline{\varphi}(s):= \begin{cases} \varphi(s_\ell) & \mbox{ if } s \in H\\
\Id_X & \mbox{ otherwise.}
\end{cases}
\]
Let now $k,k' \in K$. We then have
\[
d_X^{\text{Ham}}(\overline{\varphi}(kk'),\overline{\varphi}(k)\overline{\varphi}(k')) =  d_X^{\text{Ham}}(\varphi(k_\ell k'_\ell), \varphi(k_\ell)\varphi(k'_\ell)) < \varepsilon.
\]
Suppose now that $k \neq k'$. Then $k_ell \neq k'_\ell$ so that
\[
d_X^{\text{Ham}}(\overline{\varphi}(k),\overline{\varphi}(k')) =  d_X^{\text{Ham}}(\varphi(k_\ell), \varphi(k'_\ell)) \geq 1 - \varepsilon.
\]
This shows that $\overline{\varphi}$ is a $(K,1-\varepsilon)$-injective $(K,\varepsilon)$-morphism.
It follows that $M$ is sofic.
\end{proof}

Recall that a directed family of submonoids of a monoid $M$ is a family $(M_i)_{i \in I}$ of submonoids of $M$ such that for all $i,j \in I$ there exists $k \in I$ such that $M_i \cup M_j \subset M_k$. Note that setting $i < j$ whenever $M_i \subset M_j$, we have that 
a directed family of submonoids yields an inductive system $(M_i,\psi_{ji})$ where
$\psi_{ji}$ is the inclusion morphism $M_i \hookrightarrow M_j$ for $i < j$.

\begin{corollary}
\label{c:increasing-union-sofic}
If a monoid $M$ is the union of a directed family of sofic submonoids then $M$ is also sofic.
\end{corollary}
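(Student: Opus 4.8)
The plan is to identify $M$ with the inductive limit of the given directed family and then apply Proposition~\ref{p:inductive-sofic}. As observed in the remark preceding the statement, a directed family of submonoids $(M_i)_{i \in I}$ of $M$ gives rise to an inductive system $(M_i,\psi_{ji})$ in which, for $i < j$ (that is, $M_i \subset M_j$), the transition morphism $\psi_{ji} \colon M_i \hookrightarrow M_j$ is the inclusion. Let me write $\widehat{M}$ for the inductive limit of this system. Because every $\psi_{ji}$ is an inclusion, all the $M_i$ are literally subsets of the common monoid $M$, and this makes the equivalence relation defining $\widehat{M}$ transparent.

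First I would define a map $\Theta \colon \widehat{M} \to M$ by $\Theta([x_i]) := x_i$, where the element $x_i \in M_i$ on the right is read as an element of $M$ via $M_i \subset M$. This is well defined: if $x_i \sim x_j$, then some $\ell \in I$ satisfies $i \leq \ell$, $j \leq \ell$ and $\psi_{\ell i}(x_i) = \psi_{\ell j}(x_j)$ in $M_\ell$, which for inclusions says exactly that $x_i = x_j$ in $M_\ell \subset M$. The same bookkeeping shows $\Theta$ is a monoid morphism: for an upper bound $\ell$ of $i,j$ one has $[x_i][y_j] = [\psi_{\ell i}(x_i)\psi_{\ell j}(y_j)] = [x_i y_j]$, with the product computed inside the submonoid $M_\ell$, hence agreeing with the product in $M$; and $1_{\widehat{M}} = [1_{M_i}]$ maps to $1_M$.

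Next I would verify that $\Theta$ is a bijection. Surjectivity is precisely the hypothesis $M = \bigcup_{i \in I} M_i$, since any $s \in M$ lies in some $M_i$ and equals $\Theta([s])$. For injectivity, if $\Theta([x_i]) = \Theta([x_j])$, that is $x_i = x_j$ in $M$, I would use directedness to choose $\ell \in I$ with $M_i \cup M_j \subset M_\ell$; then $x_i$ and $x_j$ are the same element of $M_\ell$, so $\psi_{\ell i}(x_i) = \psi_{\ell j}(x_j)$ and $x_i \sim x_j$. Thus $\Theta$ is an isomorphism, exhibiting $M$ as the inductive limit of a system of sofic monoids, and Proposition~\ref{p:inductive-sofic} gives that $M$ is sofic.

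The argument is essentially formal, and the only step needing any care is the injectivity of $\Theta$: this is the one place where both the directedness of the family and the fact that the transition maps are genuine inclusions are used, since one must upgrade an equality of elements of $M$ into an identification inside the inductive limit. I would flag this as the main---albeit routine---obstacle. Alternatively, one can bypass the inductive limit entirely: given a finite $K \subset M$, directedness yields a single sofic submonoid $M_{i_0} \supset K$, and extending a suitable $(K,1-\varepsilon)$-injective $(K,\varepsilon)$-morphism $M_{i_0} \to \Map(X)$ arbitrarily to $M$ works exactly as in the proof of Proposition~\ref{p:locally-sofic}.
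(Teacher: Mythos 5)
Your proof is correct and follows the same route as the paper, which simply observes that $M$ is the limit of the inductive system associated to the directed family and invokes Proposition~\ref{p:inductive-sofic}; you have merely spelled out the (routine) verification that the canonical map from the inductive limit to $M$ is an isomorphism. The alternative you sketch at the end (choosing a single $M_{i_0}$ containing $K$ by directedness and extending as in Proposition~\ref{p:locally-sofic}) is also valid and arguably more direct, but it is not the paper's argument.
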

\begin{proof}
The monoid $M$ is then the limit of the associated inductive system.
\end{proof}

\begin{remark}
Since every monoid is the union of the directed family of its finitely generated submonoids, from
Corollary \ref{c:increasing-union-sofic} we recover the fact
that every locally sofic monoid is sofic (cf. Proposition \ref{p:locally-sofic}).
\end{remark}

\begin{proposition}
\label{p:locally-emb-sofic}
Every monoid that is locally embeddable into the class of sofic monoids is itself sofic.
\end{proposition}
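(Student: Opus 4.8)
The plan is to prove soficity straight from Definition~\ref{d:sofic-semigroup}, by composing the almost-embedding furnished by local embeddability with a sofic approximation of the (sofic) target monoid. First I would fix a finite subset $K \subset M$ and a real number $\varepsilon > 0$; the task is then to produce a non-empty finite set $X$ together with a $(K, 1-\varepsilon)$-injective $(K,\varepsilon)$-morphism $M \to \Map(X)$. Since $M$ is locally embeddable into the class of sofic monoids, applying that definition to the finite set $K$ yields a sofic monoid $N$ together with a map $\psi \colon M \to N$ whose restriction to $K$ is injective, which is multiplicative on $K$ in the sense that $\psi(k_1 k_2) = \psi(k_1)\psi(k_2)$ for all $k_1, k_2 \in K$, and which satisfies $\psi(1_M) = 1_N$.

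Next I would exploit soficity of $N$, feeding into its definition the finite subset $L := \psi(K) \subset N$: this produces a non-empty finite set $X$ and a $(L, 1-\varepsilon)$-injective $(L,\varepsilon)$-morphism $\theta \colon N \to \Map(X)$. I would then set $\varphi := \theta \circ \psi \colon M \to \Map(X)$ and claim that $\varphi$ is the desired $(K,1-\varepsilon)$-injective $(K,\varepsilon)$-morphism, so that soficity of $M$ follows.

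Finally I would verify the three estimates. For the morphism property, given $k_1, k_2 \in K$, I would use multiplicativity of $\psi$ on $K$ to rewrite $\varphi(k_1 k_2) = \theta(\psi(k_1 k_2)) = \theta(\psi(k_1)\psi(k_2))$; since $\psi(k_1), \psi(k_2) \in L$, the $(L,\varepsilon)$-morphism condition for $\theta$ then bounds $d_X^{\text{Ham}}(\varphi(k_1 k_2), \varphi(k_1)\varphi(k_2))$ by $\varepsilon$. Likewise $\varphi(1_M) = \theta(\psi(1_M)) = \theta(1_N)$, which is within $\varepsilon$ of $\Id_X$ by the identity clause of the $(L,\varepsilon)$-morphism condition. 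For injectivity, given distinct $k_1, k_2 \in K$, injectivity of $\psi$ on $K$ guarantees that $\psi(k_1)$ and $\psi(k_2)$ are distinct elements of $L$, whence the $(L, 1-\varepsilon)$-injectivity of $\theta$ yields $d_X^{\text{Ham}}(\varphi(k_1), \varphi(k_2)) = d_X^{\text{Ham}}(\theta(\psi(k_1)), \theta(\psi(k_2))) \geq 1-\varepsilon$.

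The argument is essentially a single composition, so I do not expect any serious obstacle; the only points requiring genuine care are bookkeeping ones. Namely, one must apply soficity of $N$ to the precise finite subset $L = \psi(K)$ rather than to some larger set, and one must invoke property~(2) of local embeddability to replace $\psi(k_1 k_2)$ by the product $\psi(k_1)\psi(k_2)$ before applying the morphism estimate for $\theta$ — this replacement is exactly what allows the $(L,\varepsilon)$-morphism condition, which only constrains pairs drawn from $L$, to control the relevant Hamming distance.
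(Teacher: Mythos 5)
Your proposal is correct and follows essentially the same route as the paper: obtain the partial embedding $\psi$ into a sofic monoid $N$ from local embeddability, take a sofic approximation $\theta$ of $N$ relative to $\psi(K)$, and check that the composite is a $(K,1-\varepsilon)$-injective $(K,\varepsilon)$-morphism. Your verification of the three estimates (using multiplicativity of $\psi$ on $K$, the identity clause, and injectivity of $\psi$ on $K$) is exactly the content the paper leaves to the reader with the word ``clearly.''
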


\begin{proof}
Let $M$ be a  monoid that is locally embeddable into the class of sofic monoids. 
Let $K \subset M$ be a finite subset and $\varepsilon > 0$.
We want to show that there exist a non-empty finite set $X$ and
a $(K,1 - \varepsilon)$-injective $(K,\varepsilon)$-morphism $\varphi \colon M \to \Map(X)$.
Without loss of generality, we may assume $1_M \in K$. 
By definition of local embeddability, there exist a sofic monoid $N$ and a map $\psi \colon M \to N$
which is injective on $K$ and satisfies
\begin{equation}
\label{e:psi-local-morphism}
\psi(k_1 k_2) = \psi(k_1) \psi(k_2)
\quad \text{for all $k_1,k_2 \in K$}
\end{equation}
and $\psi(1_M) = 1_N$.
Let $K' := \psi(K)$. Since $N$ is sofic, there exist a non-empty finite set $X$ and a 
$(K',1 - \varepsilon)$-injective $(K',\varepsilon)$-morphism $\varphi' \colon N \to \Map(X)$.
Then the composite map $\varphi' \circ \psi \colon M \to \Map(X)$ is   clearly  a
$(K,1 - \varepsilon)$-injective $(K,\varepsilon)$-morphism. 
This shows that $M$ is a sofic monoid.
\end{proof}

\begin{corollary}
\label{c:residually-sofic}
Every locally residually sofic monoid is sofic.
In particular, every residually sofic monoid is sofic.
\end{corollary}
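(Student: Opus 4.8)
The plan is to deduce this corollary by combining three results already established in the excerpt, taking $\PP$ to be the property of being a sofic monoid. The chain of implications I would use is
\[
\text{locally residually sofic} \implies \text{locally embeddable into sofic} \implies \text{sofic}.
\]

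First I would verify that the property $\PP = {}$``being sofic'' is closed under finite products: this is a special case of Proposition~\ref{p:prod-sofic}, which asserts that any product of sofic monoids is sofic, and in particular any \emph{finite} product of sofic monoids is sofic. This closure is precisely the hypothesis needed to invoke Proposition~\ref{p:residually-LEP} with this choice of $\PP$. Applying that proposition then gives that every locally residually sofic monoid $M$ is locally embeddable into the class of sofic monoids. Finally, Proposition~\ref{p:locally-emb-sofic} tells us that any monoid locally embeddable into the class of sofic monoids is itself sofic, which yields the first assertion of the corollary.

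For the ``in particular'' statement, I would reduce residual soficity to local residual soficity. The point is that the restriction of a monoid morphism to a submonoid is again a monoid morphism, so any submonoid of a residually sofic monoid is itself residually sofic. In particular, if $M$ is residually sofic then every finitely generated submonoid of $M$ is residually sofic, that is, $M$ is locally residually sofic; the conclusion then follows from the first part.

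I expect no serious obstacle here, since the substantive content of the corollary has already been front-loaded into Propositions~\ref{p:residually-LEP}, \ref{p:prod-sofic}, and \ref{p:locally-emb-sofic}. The only point requiring a moment's attention is checking the closure of soficity under finite products so that Proposition~\ref{p:residually-LEP} genuinely applies with $\PP$ equal to soficity, and this is immediate from Proposition~\ref{p:prod-sofic}.
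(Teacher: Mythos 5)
Your proposal is correct and follows exactly the paper's route: the paper's own proof cites the same three results (Proposition~\ref{p:residually-LEP}, Proposition~\ref{p:prod-sofic}, and Proposition~\ref{p:locally-emb-sofic}) in the same combination, with Proposition~\ref{p:prod-sofic} supplying the finite-product hypothesis needed to apply Proposition~\ref{p:residually-LEP} to the property of being sofic. Your added remark that residual soficity passes to submonoids, hence implies local residual soficity, correctly justifies the ``in particular'' clause that the paper leaves implicit.
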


\begin{proof}
This immediately follows from Proposition~\ref{p:residually-LEP}, Proposition~\ref{p:prod-sofic},
and Proposition~\ref{p:locally-emb-sofic}.
\end{proof}

% SECTION 4
\section{Examples of  sofic monoids}
\label{sec:finite-are-sofic}

\begin{proposition}
\label{t:finite-are-sofic}
Every finite monoid is sofic.
\end{proposition}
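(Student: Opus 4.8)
The plan is to use the left regular representation of $M$ and then to let the amplification already carried out in Proposition~\ref{p:equiv-def-sofic} do the remaining work. The one-element monoid is trivially sofic, so I may assume $|M| \geq 2$. I would take $X := M$ and consider the map $L \colon M \to \Map(M)$ sending each $m \in M$ to the left multiplication $L_m \colon t \mapsto mt$. Associativity gives $L_{mm'} = L_m \circ L_{m'}$ for all $m,m' \in M$, and $L_{1_M} = \Id_M$, so $L$ is a genuine monoid morphism.

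Since $L$ is an honest morphism, it is automatically a $(K,\varepsilon)$-morphism for every finite $K \subset M$ and every $\varepsilon > 0$; in fact the two Hamming inequalities in Definition~\ref{d:almost-momorphism} hold with value $0$. What I would then check is the Hamming injectivity: for distinct $m,m' \in M$ the maps $L_m$ and $L_{m'}$ already differ at the point $1_M$, since $L_m(1_M) = m \neq m' = L_{m'}(1_M)$, whence $d_M^{\text{Ham}}(L_m, L_{m'}) \geq 1/|M|$. Setting $\alpha := 1/|M|$, which lies in $(0,1)$ because $|M| \geq 2$, the map $L$ is therefore $(K,\alpha)$-injective for every finite $K \subset M$.

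In this way the single data $X = M$, $\varphi = L$, $\alpha = 1/|M|$ verify condition (c) of Proposition~\ref{p:equiv-def-sofic} simultaneously for all finite $K$ and all $\varepsilon > 0$, and the implication (c) $\Rightarrow$ (a) of that proposition then delivers the soficity of $M$. The only genuine difficulty is that the naive embedding $L$ achieves a Hamming injectivity constant as small as $1/|M|$, far below the $1 - \varepsilon$ demanded in Definition~\ref{d:sofic-semigroup}; this gap is exactly what the diagonal amplification from $\Map(X)$ into $\Map(X^n)$ inside Proposition~\ref{p:equiv-def-sofic} is designed to close, so I expect no further obstacle once that proposition is invoked.
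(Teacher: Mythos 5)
Your proof is correct and takes essentially the same route as the paper: both rest on the Cayley (left regular) representation $m \mapsto L_m$ into $\Map(M)$, the observation that distinct maps of a finite set are at Hamming distance at least $1/|M|$, and the amplification supplied by condition (c) of Proposition~\ref{p:equiv-def-sofic}. The only cosmetic difference is that the paper first reduces to the soficity of $\Map(X)$ via Proposition~\ref{p;subgroups-sofic} and applies condition (c) to the identity morphism there, whereas you verify condition (c) directly for the Cayley map itself.
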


\begin{proof}
Any monoid $M$ is isomorphic to a submonoid of the symmetric monoid $\Map(M)$ via the Cayley map
$m \mapsto L_m$ that sends every $m \in M$ to the left multiplication by $m$.
As every submonoid of a sofic monoid is itself sofic by Proposition~\ref{p;subgroups-sofic},
 it suffices to prove that the symmetric monoid of any finite set is sofic. 
\par
Let $X$ be a finite set of cardinality $ |X| \geq 1$ 
and let $\alpha := 1/|X|$.
Then, for every $\varepsilon > 0$ and every $K \subset \Map(X)$,   the identity morphism 
$\Id_{\Map(X)} \colon  \Map(X) \to \Map(X)$ is a $(K,\alpha)$-injective $(K,\varepsilon)$-morphism.
 Thus, the monoid $\Map(X)$ satisfies condition (c) in Proposition~\ref{p:equiv-def-sofic}. 
This shows that   $\Map(X)$ is sofic.  
\end{proof}

From Proposition~\ref{t:finite-are-sofic}, Proposition~\ref{p:locally-emb-sofic}, and
Corollary~\ref{c:loc-rf-are-LEF}, we deduce the following result.

\begin{corollary}
\label{c:lef-are-sofic}
Every LEF-monoid is sofic.
In particular, every locally residually finite monoid, and hence every residually finite monoid and every locally finite monoid, is sofic.\qed
\end{corollary}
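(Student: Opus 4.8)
The plan is to chain together the three results cited just before the statement, so that the corollary becomes a purely formal bookkeeping consequence. First I would record the key subclass inclusion: by Proposition~\ref{t:finite-are-sofic}, every finite monoid is sofic, so the class of finite monoids is contained in the class of sofic monoids. This inclusion is what makes the argument go through.

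Next I would observe that local embeddability transports along such a subclass inclusion. Indeed, suppose $M$ is an LEF-monoid, that is, $M$ is locally embeddable into the class of finite monoids. Then for every finite subset $K \subset M$ the witnessing data --- a finite monoid $N$ together with a map $\varphi \colon M \to N$ that is injective on $K$, multiplicative on $K$, and sends $1_M$ to $1_N$ --- already serve as witnesses for local embeddability into the \emph{larger} class of sofic monoids, precisely because $N$ is sofic by the first step. Hence $M$ is locally embeddable into the class of sofic monoids. No estimate or new construction is required here; it is only a matter of reinterpreting the same witnesses against a bigger target class.

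Having established that every LEF-monoid is locally embeddable into the class of sofic monoids, I would then apply Proposition~\ref{p:locally-emb-sofic} verbatim to conclude that $M$ is sofic. For the ``in particular'' assertions, I would invoke Corollary~\ref{c:loc-rf-are-LEF}, which guarantees that every locally residually finite monoid is an LEF-monoid, and that residually finite monoids and locally finite monoids are special cases; feeding each of these into the soficity statement just obtained yields the remaining claims.

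The only conceivable subtlety is the transport step, namely checking that witnesses for embeddability into finite monoids are automatically witnesses for embeddability into sofic monoids. Since this reduces to the trivial containment of classes supplied by Proposition~\ref{t:finite-are-sofic}, I expect no genuine obstacle: the entire corollary is a direct consequence of the three stated results and should require essentially no further work.
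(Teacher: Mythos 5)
Your proposal is correct and follows exactly the paper's route: the paper deduces the corollary from Proposition~\ref{t:finite-are-sofic} (finite monoids are sofic, so LEF witnesses are witnesses for local embeddability into the class of sofic monoids), Proposition~\ref{p:locally-emb-sofic}, and Corollary~\ref{c:loc-rf-are-LEF} for the ``in particular'' clauses. Nothing further is needed.
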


\begin{proposition}
\label{t:commutative-are-sofic}
Every commutative monoid is sofic.
\end{proposition}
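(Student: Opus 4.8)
The plan is to reduce the claim about arbitrary commutative monoids to a situation already covered by the machinery developed in the previous section, namely local embeddability into a class known to be sofic. Since every monoid is the directed union of its finitely generated submonoids, Corollary~\ref{c:increasing-union-sofic} (or equivalently Proposition~\ref{p:locally-sofic}) reduces the problem to the case of a finitely generated commutative monoid. So first I would fix a finitely generated commutative monoid $M$ and aim to show it is sofic; the general case then follows immediately.

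For a finitely generated commutative monoid, the natural strategy is to prove that $M$ is residually finite, and then invoke Corollary~\ref{c:lef-are-sofic}, which tells us that every residually finite monoid is sofic. The residual finiteness of finitely generated commutative monoids (or commutative semigroups with identity adjoined) is a classical fact: it goes back to work of Mal'cev and Budach, and a clean modern reference is the theory of finitely generated commutative semigroups, which are all residually finite. Concretely, to separate two distinct elements $s_1 \neq s_2$ of $M$ by a morphism to a finite commutative monoid, one uses that a finitely generated commutative monoid embeds appropriately into structures built from $\N^k$ together with congruences of finite index, and finite-index congruences suffice to distinguish any given pair of points. Thus the key lemma I would isolate is: \emph{every finitely generated commutative monoid is residually finite}.

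The main obstacle is establishing that residual finiteness lemma cleanly, since a finitely generated commutative monoid is a quotient $\N^k / \rho$ by a finitely generated congruence $\rho$, and one must produce, for each pair of distinct classes, a monoid morphism to a finite commutative monoid separating them. The clean route is Redei's theorem, which guarantees that every congruence on $\N^k$ is finitely generated, combined with the fact that finitely generated commutative monoids have the separation property with respect to finite quotients. Alternatively, one can argue directly: given distinct $s_1, s_2 \in M = \N^k/\rho$, the preimages are disjoint unions of $\rho$-classes, and by a compactness/Noetherianity argument on the finitely generated congruence one can refine $\rho$ to a finite-index congruence still separating $s_1$ and $s_2$, yielding a morphism onto a finite commutative monoid. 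Assembling these pieces, finite generation plus residual finiteness gives soficity of $M$ via Corollary~\ref{c:lef-are-sofic}, and Proposition~\ref{p:locally-sofic} lifts this to arbitrary commutative monoids.

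I would note as a remark that this argument shows more: every commutative monoid is in fact an LEF-monoid, since local residual finiteness already implies local embeddability into finite monoids by Corollary~\ref{c:loc-rf-are-LEF}. This is a strictly stronger conclusion than mere soficity and explains why commutative monoids sit comfortably inside the well-behaved part of the theory, in contrast to the bicyclic monoid treated later.
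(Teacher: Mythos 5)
Your proposal is correct and follows essentially the same route as the paper: the paper's proof is a one-liner invoking Mal'cev's theorem that every commutative semigroup is locally residually finite (i.e., every finitely generated commutative monoid is residually finite) together with Corollary~\ref{c:lef-are-sofic}. Your explicit reduction to the finitely generated case and your sketch of the residual finiteness argument via R\'edei's theorem are just an unpacking of the same two ingredients, and your closing remark that commutative monoids are in fact LEF-monoids matches what the paper's chain of results already yields.
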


\begin{proof}
This follows from  
Corollary~\ref{c:lef-are-sofic},
since, by a result of Mal'cev \cite{malcev} (see also \cite{lallement}, \cite{carlisle}), every   commutative semigroup is locally residually finite.
\end{proof}

\begin{corollary}
\label{c:free}
Every free monoid is sofic.
\end{corollary}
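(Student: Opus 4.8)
The plan is to realize the free monoid on a set $S$ as a submonoid of the free group $F(S)$ on the same set, and then to deduce soficity from two structural results already at our disposal: that free groups are sofic, and that every submonoid of a sofic monoid is sofic (Proposition \ref{p;subgroups-sofic}). This also matches the logic announced in the introduction, where soficity of free monoids is presented as a consequence of the fact that a monoid embeddable into a sofic group is sofic.

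First I would set up the embedding. Write the free monoid on $S$ as the set $S^*$ of all finite words over the alphabet $S$, with concatenation as the product and the empty word as identity. By the universal property of the free monoid, the inclusion $S \hookrightarrow F(S)$ extends to a unique monoid morphism $\iota \colon S^* \to F(S)$ sending a word $s_1 \cdots s_n$ to the corresponding product in $F(S)$. The key point is that $\iota$ is injective: a word of $S^*$ is already reduced as an element of $F(S)$, since no inverse letters occur in it, and the product of two such positive words involves no cancellation; hence distinct words of $S^*$ remain distinct reduced words in $F(S)$. Thus $\iota$ identifies $S^*$ with the submonoid of $F(S)$ consisting of the positive reduced words.

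Next I would invoke soficity of free groups. Free groups are residually finite (a classical fact), hence residually finite as monoids, since their finite quotient groups are finite monoids and group morphisms are monoid morphisms; therefore free groups are sofic by Corollary \ref{c:lef-are-sofic}. (Alternatively, free groups are sofic as groups and so sofic as monoids by Proposition \ref{p:sofic-groups-monoids}.) Applying Proposition \ref{p;subgroups-sofic} to the submonoid $\iota(S^*) \cong S^*$ of $F(S)$ then yields that $S^*$ is sofic, which is the desired conclusion.

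The only points requiring care are the injectivity of $\iota$ and, when $S$ is infinite, the fact that free groups of arbitrary rank are still residually finite; both are standard. In fact the infinite case can be bypassed altogether: $S^*$ is the union of the directed family of its free submonoids $T^*$ with $T \subseteq S$ finite, so soficity in the finitely generated case already gives the general statement through Corollary \ref{c:increasing-union-sofic} (equivalently, Proposition \ref{p:locally-sofic}). I expect the embedding step, and in particular the verification that $\iota$ is injective, to be the main—though entirely elementary—obstacle, since everything else is a direct appeal to the results established above.
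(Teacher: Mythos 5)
Your argument is correct, but it takes a different route from the paper's. The paper's proof of Corollary~\ref{c:free} is a one-liner: free monoids are residually finite, hence sofic by Corollary~\ref{c:lef-are-sofic}. You instead embed the free monoid $S^*$ into the free group $F(S)$, observe that $F(S)$ is sofic (residually finite, or sofic as a group together with Proposition~\ref{p:sofic-groups-monoids}), and conclude by Proposition~\ref{p;subgroups-sofic}. Both routes are sound and both are in fact foreshadowed in the paper: the introduction announces exactly your embedding argument (``every monoid that can be embedded into a sofic group is itself sofic''), while the remark following Proposition~\ref{p:adjoining-identity-sofic} gives yet a third derivation via adjoining an identity to the free semigroup $\Sigma^+$. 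The trade-off is that your approach carries the small extra burden of checking injectivity of $\iota$ (positive words are reduced and concatenation involves no cancellation, as you correctly note) and of handling infinite rank (which you correctly dispose of via Corollary~\ref{c:increasing-union-sofic} or Proposition~\ref{p:locally-sofic}), but in exchange it places the corollary inside the more general principle that any monoid embeddable into a sofic group is sofic; the paper's route is shorter, resting on the classical fact that free monoids are themselves residually finite, and stays entirely within monoid-theoretic results.
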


\begin{proof}
This follows from Corollary~\ref{c:lef-are-sofic}
since every free monoid is residually finite.
\end{proof}

\begin{corollary}
Let $K$ be a field and let $n \geq 1$ be an integer.
Then the multiplicative monoid $\M_n(K)$ formed by all $n \times n$ matrices with entries in $K$ is sofic.
\end{corollary}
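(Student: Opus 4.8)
The plan is to show that $\M_n(K)$ is locally residually finite and then invoke Corollary~\ref{c:lef-are-sofic}, which guarantees that every locally residually finite monoid is sofic. By definition of ``locally residually finite'', this reduces to proving that every finitely generated submonoid $M \subseteq \M_n(K)$ is residually finite.

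So I would fix a finite set of generators $A_1, \dots, A_m \in \M_n(K)$, let $M$ be the submonoid they generate, and let $R \subseteq K$ be the subring generated by $1$ together with all the entries of the $A_i$. Then $R$ is a finitely generated commutative $\Z$-algebra (in positive characteristic the structure map $\Z \to K$ factors through the prime field, but $R$ is still finitely generated as a $\Z$-algebra), and since $R \subseteq K$ it is an integral domain. By construction $M \subseteq \M_n(R)$, and since a submonoid of a residually finite monoid is again residually finite (just restrict the separating morphisms), it suffices to prove that the multiplicative monoid $\M_n(R)$ is residually finite.

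The key algebraic input is the arithmetic form of the Nullstellensatz: a finitely generated $\Z$-algebra is a Jacobson ring, and every maximal ideal $\mathfrak{m}$ of such a ring has finite residue field $R/\mathfrak{m}$. Because $R$ is Jacobson its Jacobson radical equals its nilradical, and since $R$ is a domain the nilradical is zero; hence the intersection of all maximal ideals of $R$ is $\{0\}$. Consequently, for every nonzero $r \in R$ there is a maximal ideal $\mathfrak{m}$ with $r \notin \mathfrak{m}$ and $R/\mathfrak{m}$ finite. Now, given two distinct matrices $B, C \in \M_n(R)$, some entry of $B - C$ is a nonzero element $r \in R$; choosing such an $\mathfrak{m}$ and reducing entrywise gives a monoid morphism $\M_n(R) \to \M_n(R/\mathfrak{m})$ onto the finite monoid $\M_n(R/\mathfrak{m})$ that separates $B$ and $C$. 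Thus $\M_n(R)$ is residually finite, hence so is its submonoid $M$.

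This shows $\M_n(K)$ is locally residually finite, and Corollary~\ref{c:lef-are-sofic} then completes the argument. The one genuinely nontrivial step is the appeal to the arithmetic Nullstellensatz, namely that the finite residue-field quotients of a finitely generated $\Z$-algebra domain separate its points; this is exactly the mechanism underlying Mal'cev's theorem that finitely generated linear groups are residually finite, applied here at the level of the full matrix monoid rather than to a group of invertible matrices. Everything else (passing to a finitely generated subring, reducing entrywise, and descending to the submonoid) is routine.
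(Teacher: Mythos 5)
Your proposal is correct and follows exactly the same route as the paper: establish that $\M_n(K)$ is locally residually finite and then apply Corollary~\ref{c:lef-are-sofic}. The only difference is that the paper simply cites Mal'cev \cite{malcev-representation} for the local residual finiteness, whereas you supply a correct self-contained proof of that step (reduction to a finitely generated subring $R \subset K$, which is a Jacobson domain whose maximal ideals have finite residue fields and intersect in $\{0\}$, followed by entrywise reduction modulo a suitable maximal ideal).
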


\begin{proof}
This follows from  
Corollary~\ref{c:lef-are-sofic},
since, by a result of Mal'cev \cite{malcev-representation},
the multiplicative monoid $\M_n(K)$ is locally residually finite.
\end{proof}

\begin{remark}
In notes by Stallings \cite{stallings-notes},
it is shown that the field $K$ in Mal'cev result can be replaced by any commutative unital ring.
\end{remark}

\begin{proposition}
\label{p:canc-amen-are-sofic}
All cancellative one-sided amenable monoids are sofic.
\end{proposition}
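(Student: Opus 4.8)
The plan is to reduce the statement to the known soficity of amenable groups, exploiting the permanence properties already established. As observed in the introduction, every submonoid of a sofic monoid is sofic, so it suffices to embed a cancellative one-sided amenable monoid into a sofic group. My strategy is therefore threefold: (i) embed $M$ into an amenable group $G$; (ii) observe that $G$, being amenable, is sofic; and (iii) conclude from the submonoid permanence property.

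For step (i), suppose $M$ is cancellative and, say, left amenable (the right-amenable case follows by passing to the opposite monoid $M^{\mathrm{op}}$, which is cancellative and left amenable and whose group of quotients is the opposite, hence again amenable, of that of $M$). The key point is that one-sided amenability of a cancellative monoid already forces a one-sided Ore condition: left amenability implies that $M$ is right reversible, i.e. $Ms \cap Mt \neq \emptyset$ for all $s,t \in M$. A right reversible cancellative monoid satisfies the left Ore condition and hence embeds into its group of left quotients $G = M^{-1}M$. Moreover, in this situation the left amenability of $M$ is equivalent to the amenability of $G$, so that $G$ is an amenable group containing $M$ as a submonoid.

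Granting the embedding, steps (ii) and (iii) are immediate from the material already developed. Amenable groups are sofic as groups (they are, in particular, locally residually amenable, a class noted to be sofic in Section~\ref{s:introduction}), so by Proposition~\ref{p:sofic-groups-monoids} the group $G$ is sofic as a monoid. Since $M$ is a submonoid of $G$, Proposition~\ref{p;subgroups-sofic} then yields that $M$ is sofic.

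The only nonroutine ingredient, and hence the main obstacle, is the embedding in step (i): that a cancellative left-amenable monoid is right reversible, embeds into its group of left quotients, and that this group is amenable. All of this is classical in the theory of amenability of cancellative semigroups, and I would simply invoke it---for instance via the standard characterization that a cancellative semigroup is left amenable if and only if it is right reversible and its group of left quotients is amenable---rather than reprove it. Once this input is in place, the soficity of $M$ follows purely formally from Proposition~\ref{p:sofic-groups-monoids} and Proposition~\ref{p;subgroups-sofic}.
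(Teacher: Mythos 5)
Your proposal is correct and follows essentially the same route as the paper: embed the cancellative one-sided amenable monoid into an amenable group (the paper cites Wilde--Witz, Corollary~3.6, for the left-amenable case and passes to the opposite monoid for the right-amenable case, exactly as you do), then combine the soficity of amenable groups with Proposition~\ref{p:sofic-groups-monoids} and the submonoid permanence of Proposition~\ref{p;subgroups-sofic}. The extra detail you give on the Ore condition is just an unpacking of the same cited embedding theorem.
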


\begin{proof}
It is known \cite[Corollary~3.6]{wilde-witz} that every cancellative left-amenable monoid is isomorphic to a submonoid of an amenable group. As the opposite semigroup of a right-amenable semigroup is left-amenable and every group is isomorphic to its opposite, we deduce that every cancellative right-amenable semigroup is also isomorphic to a submonoid of an amenable group. Thus, the result follows from Proposition~\ref{p;subgroups-sofic} and the fact that every amenable group is sofic as a group (see for instance \cite[Proposition 7.5.6]{book}) and hence sofic as a monoid by Proposition~\ref{p:sofic-groups-monoids}. 
\end{proof}

\begin{proposition}
\label{p:adjoining-identity-sofic}
Let $S$ be a semigroup and let $M = M(S)$ denote the monoid obtained from $S$ by adjoining an identity element. Then $M$ is sofic.
\end{proposition}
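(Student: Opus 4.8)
The plan is to verify Definition~\ref{d:sofic-semigroup} directly, producing for each finite $K$ and each $\varepsilon>0$ an explicit finite set $X$ together with a $(K,1-\varepsilon)$-injective $(K,\varepsilon)$-morphism $\varphi\colon M\to\Map(X)$; no amplification will be needed. Enlarging $K$ if necessary, I assume $1_M\in K$ and set $K_S:=K\setminus\{1_M\}\subseteq S$ and $H:=K_S\cup K_S^2\subseteq S$, a finite subset. The single structural feature of $M=M(S)$ that I will exploit is that $S$ is a subsemigroup of $M$ with $1_M\notin S$; consequently, for $g,h\in K_S$ the product $gh$ lies in $H\subseteq S$ and, in particular, is never equal to $1_M$.

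For the construction, take $X:=Y\sqcup P\sqcup\{z\}$, where $P:=\{p_u:u\in H\}$ is a set of distinct \emph{record} points indexed by $H$, $z$ is a single \emph{sink}, and $Y$ is an auxiliary set whose cardinality will be chosen very large compared with $|H|$. Define $\varphi(1_M):=\Id_X$ and, for $s\in H$, let $\varphi(s)$ be the map sending every $y\in Y$ to the record $p_s$, sending each record $p_u$ to $p_{su}$ if $su\in H$ and to $z$ otherwise, and fixing $z$; extend $\varphi$ arbitrarily (say by $\Id_X$) to the remaining elements of $M$, which play no role. The idea is that $\varphi$ carries all of the injectivity on the large transient block $Y$ by means of constant maps, while the genuine (partial) left-translation is confined to the negligible block $P$ and is only ever used for a single composition step.

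For the verification, the products involving $1_M$ are handled exactly, since $\varphi(1_M)=\Id_X$. For distinct $g,h\in K_S$ the maps $\varphi(g)$ and $\varphi(h)$ are the constants $p_g\neq p_h$ on $Y$, hence differ on all of $Y$, giving $d_X^{\text{Ham}}(\varphi(g),\varphi(h))\geq|Y|/|X|$; the same bound separates $\varphi(1_M)=\Id_X$ from each constant $\varphi(g)$. For the morphism property with $g,h\in K_S$, both $\varphi(gh)$ and $\varphi(g)\circ\varphi(h)$ send every $y\in Y$ to the single point $p_{gh}$ --- here I use that $gh\in H$, so that $\varphi(gh)$ is again the constant map with value $p_{gh}$, which is exactly where the hypothesis $1_M\notin S$ is essential --- so these two maps agree off $P$, whence $d_X^{\text{Ham}}(\varphi(gh),\varphi(g)\varphi(h))\leq|P|/|X|=|H|/|X|$. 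Choosing $|Y|$ large enough that $(|H|+1)/|X|\leq\varepsilon$ makes $\varphi$ simultaneously a $(K,\varepsilon)$-morphism and $(K,1-\varepsilon)$-injective, as required.

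The point I expect to be the real obstacle is conceptual rather than computational: one cannot hope to realize the multiplication of $S$ faithfully by far-apart maps, since that would in effect force soficity of $S$ itself (taking $S$ to be a group, it would decide the existence of non-sofic groups). The construction circumvents this by never attempting to encode the product of two elements of $K_S$ on the large block $Y$; there the composition $\varphi(g)\circ\varphi(h)$ is made to agree with $\varphi(gh)$ only because both collapse $Y$ to the single record $p_{gh}$. This is consistent precisely because $gh\in S$, so that $\varphi(gh)$ is permitted to be a constant map far from $\Id_X$ --- in sharp contrast with the bicyclic monoid, where the relation $qp=1$ would instead demand that such a constant map be close to $\Id_X$. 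Thus the adjoined identity, that is, the property that no product of two elements of $S$ equals $1_M$, is exactly what makes the scheme succeed.
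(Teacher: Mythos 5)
Your construction is correct and is essentially the paper's own proof with the pieces renamed: your large transient block $Y$, record set $P\cong H=K_S\cup K_S^2$, and sink $z$ correspond exactly to the paper's large set $Z$, its record set $Y=K\cup K^2\subset M$, and its junk point $y_0$, and the verification (exact agreement on products involving $1_M$, agreement of $\varphi(gh)$ and $\varphi(g)\varphi(h)$ on the large block via the constant value $p_{gh}$, separation on the large block via distinct constants) is the same. Your closing remark correctly identifies the role of the hypothesis $1_M\notin S$, which is precisely the point the paper makes when contrasting this proposition with the bicyclic monoid.
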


Recall that $M = S \cup \{1_M\}$, where $1_M \notin S$ satisfies $1_Ms=s1_M=s$ for all $s \in M$ and $S$ is a subsemigroup of $M$. When $S$ is a semigroup which is not a monoid then $M$ is the so-called \emph{minimal monoid} of $S$.

\begin{proof}[Proof of Proposition \ref{p:adjoining-identity-sofic}]
Let $K$ be a finite subset of $M$ and $\varepsilon > 0$.
Let $Y := K \cup K^2$ denote the subset of $M$ consisting of all elements that are in $K$ or may be written as the product of two elements in $K$.
Choose an arbitrary element $y_0 \notin Y$ and a finite set $Z$ disjoint from $Y \cup \{y_0\}$.
Let $X := Y \cup \{y_0\} \cup Z$ and consider the map $\varphi \colon M \to \Map(X)$ defined as follows.
We  take $\varphi(1_M) = \Id_X$ and,  for $s \in S$, define $\varphi(s) \in \Map(X)$ by
\[
\varphi(s)x =
\begin{cases}
s & \text{ if } s \in Y \text{ and } x \in Z\\
sx & \text{ if } s \in Y, x \in Y, \text{ and } sx \in Y\\
y_0 &\text{ otherwise}
\end{cases}
\]
for all $x \in X$.
\par  
For $k_1,k_2 \in K \setminus \{1_M\}$,
we have $k_1k_2 \in Y \setminus \{1_M\}$ so that 
\[
(\varphi(k_1)\varphi(k_2))(z) = \varphi(k_1)(k_2) = k_1 k_2 = \varphi(k_1k_2)(z).
\]
As $\varphi(k_1 k_2) = \varphi(k_1) \varphi(k_2)$ if $k_1 = 1_M$ or $k_2 = 1_M$, we deduce that
 \[
d_X^{\text{Ham}} (\varphi(k_1 k_2),  \varphi(k_1) \varphi(k_2))\leq 1 - \frac{|Z|}{|X|} 
\quad \text{for all  } k_1,k_2 \in K.
\]
 On the other hand, if $z \in Z$,
 we have $\varphi(1_M)(z) = z$ and $\varphi(k)(z) = k$
 for all $k  \in K \setminus \{1_M\}$.
It follows that
\[
d_X^{\text{Ham}}(\varphi(k_1),\varphi(k_2)) \geq \frac{|Z|}{|X|} \quad \text{for all distinct  } k_1,k_2 \in K.   
\]
Consequently, $\varphi$ is a $(K,1 - \varepsilon)$-injective $(K,\varepsilon)$-morphism for $|Z|$ large enough.
This shows that the monoid $M$ is sofic.
\end{proof}

\begin{remark}
One may rephrase Proposition \ref{p:adjoining-identity-sofic} by saying that every monoid $M$ in which the equation $x y = 1_M$ implies $x = y = 1_M$ is sofic.

Note also that, given any set $\Sigma$, the free monoid $M:=\Sigma^*$ is isomorphic to the monoid $M(S)$ obtained from  the free semigroup $S:=\Sigma^+ = \Sigma \Sigma^*$ by adjoining an identity element. Thus from Proposition \ref{p:adjoining-identity-sofic} we recover Corollary \ref{c:free}.
\end{remark}

% SECTION 5
\section{Non-soficity of the bicyclic monoid}
\label{sec:bicyclic}

 The \emph{bicyclic monoid} is the monoid $B$ given by the 
presentation $B = \langle p,q : pq = 1 \rangle$.
Every element $s \in B$ may be uniquely written in the form
$s = q^a p^b$, where $a = a(s)$ and $b = b(s)$ are non-negative integers.
 The bicyclic monoid may also be viewed as a submonoid of the symmetric monoid $\Map(\N)$ of the set of non-negative integers by regarding $p$ and $q$ as the maps respectively defined by
\[
p(n) =
\begin{cases}
n - 1 &\text{ if } n \geq 1 \\
0 & \text{ if } n = 0
\end{cases}
\quad
\text{and}
\quad
q(n) = n + 1
\quad
\text{for all }  n \in \N.
\]

\begin{theorem}
\label{t:bicyclic-not-sofic}
The bicyclic monoid $B = \langle p,q : pq = 1 \rangle$ is not sofic.
\end{theorem}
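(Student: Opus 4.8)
The plan is to exploit the single defining relation of $B$, namely that $pq = 1_B$ while $qp \neq 1_B$, and to show that this asymmetry cannot survive inside a finite symmetric monoid. The crux is a ``direct finiteness'' phenomenon for finite transformation monoids: for any non-empty finite set $X$ and any $f, g \in \Map(X)$, the maps $fg$ and $gf$ have exactly the same number of fixed points, so that $d_X^{\text{Ham}}(fg, \Id_X) = d_X^{\text{Ham}}(gf, \Id_X)$. I would prove this by exhibiting an explicit bijection between $\Fix(fg)$ and $\Fix(gf)$, namely $x \mapsto g(x)$ with inverse $y \mapsto f(y)$. Indeed, if $f(g(x)) = x$ then $gf(g(x)) = g(f(g(x))) = g(x)$, so $g$ maps $\Fix(fg)$ into $\Fix(gf)$; symmetrically $f$ maps $\Fix(gf)$ into $\Fix(fg)$; and on these sets the two maps are mutually inverse, since $f(g(x)) = x$ for $x \in \Fix(fg)$ and $g(f(y)) = y$ for $y \in \Fix(gf)$. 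Hence $|\Fix(fg)| = |\Fix(gf)|$.

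Granting this lemma, I would argue by contradiction. Fix the finite subset $K := \{1_B, p, q, qp\}$, whose four elements are pairwise distinct since their normal forms $q^a p^b$ are $(a,b) = (0,0),(0,1),(1,0),(1,1)$, and fix any $\varepsilon$ with $0 < \varepsilon < 1/5$. If $B$ were sofic, there would exist a non-empty finite set $X$ and a $(K, 1-\varepsilon)$-injective $(K,\varepsilon)$-morphism $\varphi \colon B \to \Map(X)$; write $P := \varphi(p)$ and $Q := \varphi(q)$.

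The remaining steps are bookkeeping with the triangle inequality for the Hamming metric. From $pq = 1_B$ and the $(K,\varepsilon)$-morphism estimate applied to $k_1 = p$, $k_2 = q$, together with $d_X^{\text{Ham}}(\varphi(1_B), \Id_X) \le \varepsilon$, one gets $d_X^{\text{Ham}}(PQ, \Id_X) \le 2\varepsilon$. The lemma then yields $d_X^{\text{Ham}}(QP, \Id_X) \le 2\varepsilon$. Applying the morphism estimate to $k_1 = q$, $k_2 = p$ gives $d_X^{\text{Ham}}(\varphi(qp), QP) \le \varepsilon$, so after one further use of $d_X^{\text{Ham}}(\varphi(1_B), \Id_X) \le \varepsilon$ we obtain $d_X^{\text{Ham}}(\varphi(qp), \varphi(1_B)) \le 4\varepsilon$. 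On the other hand, $qp$ and $1_B$ are distinct elements of $K$, so $(K, 1-\varepsilon)$-injectivity forces $d_X^{\text{Ham}}(\varphi(qp), \varphi(1_B)) \ge 1 - \varepsilon$. Combining the two bounds gives $1 - \varepsilon \le 4\varepsilon$, that is $\varepsilon \ge 1/5$, contradicting the choice of $\varepsilon$. Therefore $B$ is not sofic.

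I expect the only genuine difficulty to be isolating and proving the fixed-point-counting lemma; everything downstream is a routine chain of triangle inequalities, and the numerical choices (a four-element $K$ and the threshold $1/5$) are not sharp and could be relaxed. It is worth noting that the argument in fact proves more: no sofic monoid can contain a pair $p, q$ with $pq = 1$ and $qp \neq 1$, so every sofic monoid is directly finite, and the bicyclic monoid is simply this obstruction in its purest form.
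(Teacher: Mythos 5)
Your proposal is correct and follows essentially the same route as the paper: the same key lemma that $fg$ and $gf$ have equally many fixed points (the paper proves it via the injectivity of $g$ on $\Fix(fg)$, you via the explicit mutually inverse bijections $g$ and $f$, which is the same idea), the same test set $K=\{1_B,p,q,qp\}$, the same threshold $\varepsilon<1/5$, and the same chain of triangle inequalities leading to $1-\varepsilon\le 4\varepsilon$. Your closing remark that the argument shows every sofic monoid is directly finite is a valid and worthwhile observation, consistent with the paper's Corollary on monoids containing the bicyclic monoid.
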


Let us first establish the following result.

\begin{lemma}
\label{e:fg-close-implies-gf-close}
Let $X$ be a non-empty finite set and let $f,g \in \Map(X)$. 
Then one has $d_X^{\text{Ham}}(fg, \Id_X) = d_X^{\text{Ham}}(gf,\Id_X)$.
\end{lemma}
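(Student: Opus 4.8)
The plan is to translate the statement about Hamming distances into a statement about counting fixed points, and then to show that $fg$ and $gf$ have exactly the same number of fixed points. Concretely, since
\[
d_X^{\text{Ham}}(fg,\Id_X) = 1 - \frac{|A|}{|X|}, \qquad d_X^{\text{Ham}}(gf,\Id_X) = 1 - \frac{|B|}{|X|},
\]
where $A := \{x \in X : f(g(x)) = x\}$ and $B := \{y \in X : g(f(y)) = y\}$ are the fixed-point sets of $fg$ and $gf$ respectively, it suffices to prove the purely combinatorial fact that $|A| = |B|$.

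To establish this, I would build mutually inverse bijections between $A$ and $B$ directly out of $f$ and $g$. First I would check that $g$ carries $A$ into $B$: if $x \in A$, so that $f(g(x)) = x$, then setting $y := g(x)$ gives $g(f(y)) = g(f(g(x))) = g(x) = y$, whence $y \in B$. Symmetrically, $f$ carries $B$ into $A$: if $y \in B$, so that $g(f(y)) = y$, then setting $x := f(y)$ gives $f(g(x)) = f(g(f(y))) = f(y) = x$, whence $x \in A$. Thus the restrictions $g|_A \colon A \to B$ and $f|_B \colon B \to A$ are well defined.

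Next I would verify that these two restrictions are inverse to one another. For $x \in A$ the defining relation reads $f(g(x)) = x$, which says precisely that $f|_B \circ g|_A = \Id_A$; and for $y \in B$ the defining relation reads $g(f(y)) = y$, which says $g|_A \circ f|_B = \Id_B$. Hence $g|_A$ is a bijection from $A$ onto $B$, so $|A| = |B|$, and the claimed equality of Hamming distances follows.

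I do not anticipate any serious obstacle here; the single point that requires care is confirming that $g$ (respectively $f$) genuinely lands inside the opposite fixed-point set \emph{before} one is entitled to invoke the defining relation, and this is exactly the short computation carried out above. Everything else is the elementary remark that $d_X^{\text{Ham}}(\,\cdot\,,\Id_X)$ simply records the proportion of non-fixed points.
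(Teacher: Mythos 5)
Your proof is correct and follows essentially the same route as the paper: both arguments reduce the claim to comparing the fixed-point sets of $fg$ and $gf$ and use the key computation $g(f(g(x))) = g(x)$ to transport fixed points of $fg$ to fixed points of $gf$ via $g$. The only cosmetic difference is that you package this as a pair of mutually inverse bijections, whereas the paper proves one inequality using the injectivity of $g$ on the fixed-point set and then invokes symmetry.
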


\begin{proof}
By definition, the set $X_0:= \{x \in X: fg(x) = x\}$ satisfies $1 - |X_0|/|X| = d_X^{\text{Ham}}(fg, \Id_X)$. As the restriction of $g$ to $X_0$ is injective,
we have that $\vert g(X_0) \vert = \vert X_0 \vert$. 
Let now $x \in g(X_0)$ and let us denote by $y$ the unique element in $X_0$ such that $x = g(y)$.

Then we have $gf(x) = gfg(y) = g(y) = x$. We deduce that
$d_X^{\text{Ham}}(gf, \Id_X) \leq  1 - |g(X_0)|/|X|  = 1 - |X_0|/|X| = d_X^{\text{Ham}}(fg, \Id_X)$.
By exchanging $f$ and $g$, we similarly get
$d_X^{\text{Ham}}(fg, \Id_X) \leq d_X^{\text{Ham}}(gf, \Id_X)$.
It follows that $d_X^{\text{Ham}}(fg, \Id_X) = d_X^{\text{Ham}}(gf, \Id_X)$.
\end{proof}

\begin{remark}
One can give an alternative proof of Lemma \ref{e:fg-close-implies-gf-close}
based on the properties of the trace of square matrices.  
Indeed, consider the monoid monomorphism $\rho \colon \Map(X) \to \M_X(K)$, where $\M_X(K)$ is the multiplicative monoid of $X \times X$-matrices with entries in a field $K$, that sends each 
$f \in \Map(X)$ to the characteristic map of its graph $\{(f(x),x): x \in X\} \subset X \times X$.
In other word, $\Phi(f)$ is the $X \times X$ matrix with $0-1$ entries such that 
$\Phi(f)_{y,x} = 1$ if and only if $y = f(x)$.
Observe that $\Tr(\Phi(f))$ is the number of fixed points of $f$ if $K$ has characteristic $0$.
We deduce that
$$
d_X^{\text{Ham}}(fg, \Id_X) = 1 - \frac{\Tr(\Phi(fg))}{|X|} = 1 - \frac{\Tr(\Phi(f)\Phi(g))}{|X|} 
$$
for all $f,g \in \Map(X)$,
and hence $d_X^{\text{Ham}}(fg, \Id_X) = d_X^{\text{Ham}}(gf,\Id_X)$
since $\Tr(\Phi(f)\Phi(g)) = \Tr(\Phi(g)\Phi(f))$.
\end{remark}

\begin{proof}[Proof of Theorem \ref{t:bicyclic-not-sofic}]
Let  $K := \{1,p,q,qp\}$ and $0 < \varepsilon < \dfrac{1}{5}$.
Suppose that $X$ is a non-empty finite set and that $\varphi \colon B \to \Map(X)$ 
is a $(K,1 - \varepsilon)$-injective $(K,\varepsilon)$-morphism.
Consider the maps  $f:=\varphi(p)$ and $g:= \varphi(q)$.
 We then have
\begin{align*}
d_X^{\text{Ham}}(fg,\Id_X) &= d_X^{\text{Ham}}(\varphi(p)\varphi(q),\Id_X) \\
&\leq d_X^{\text{Ham}}(\varphi(pq),\Id_X) + d_X^{\text{Ham}}(\varphi(pq),\varphi(p)\varphi(q)) 
&& \text{(by the triangle inequality)} \\
&= d_X^{\text{Ham}}(\varphi(1_B),\Id_X) + d_X^{\text{Ham}}(\varphi(pq),\varphi(p)\varphi(q)) 
&& \text{(since $pq = 1_B$)} \\
&\leq 2\varepsilon
&& \text{(since $\varphi$ is a $(K,\varepsilon)$-morphism)}. \\
\end{align*}
Applying Lemma~\ref{e:fg-close-implies-gf-close}, we obtain
\begin{equation}
\label{e:gf-close-id}
d_X^{\text{Ham}}(gf,\Id_X) \leq 2 \varepsilon.
\end{equation}
Finally,  using again the triangle inequality, we get
\begin{align*}
d_X^{\text{Ham}}(\varphi(qp),\varphi(1_B)) 
&\leq d_X^{\text{Ham}}(\varphi(qp),gf) 
+ d_X^{\text{Ham}}(gf,\Id_X) + d_X^{\text{Ham}}(\varphi(1_B),\Id_X) \\ 
% && \text{(by the triangle inequality)} \\
&\leq d_X^{\text{Ham}}(\varphi(qp),\varphi(q) \varphi(p)) + 2 \varepsilon + d_X^{\text{Ham}}(\varphi(1_B),\Id_X)
\quad  \text{(by \eqref{e:gf-close-id})} \\
&\leq 4 \varepsilon 
\qquad \qquad  \text{(since $\varphi$ is a $(K,\varepsilon)$-morphism)}.
\end{align*} 
This contradicts the fact that $\varphi$ is $(K,1 - \varepsilon)$-injective since $qp$ and $1_B$ are distinct elements of $K$  and
$4 \varepsilon < 1 - \varepsilon$.
Consequently, the monoid $B$ is not sofic.
\end{proof}

\begin{remark}
Since the bicyclic monoid $B$ is the homomorphic image of the free monoid based on $p$ and $q$, which is sofic (cf. Corollary \ref{c:free}), we deduce that the class of sofic monoids is not closed under taking images by monoid homomorphisms.
\end{remark}

\begin{corollary}
\label{c:amenable-non-sofic-exist}
There exist finitely presented amenable inverse monoids that are not sofic.
\end{corollary}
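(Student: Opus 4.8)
The plan is to show that the bicyclic monoid $B = \langle p,q : pq = 1 \rangle$ is itself the desired example. By Theorem~\ref{t:bicyclic-not-sofic} it is already known to be non-sofic, so it remains only to check three properties: that $B$ is finitely presented, that it is an inverse monoid, and that it is amenable.

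Finite presentability is immediate, since the very description of $B$ presents it on the two generators $p,q$ subject to the single relation $pq = 1$. For the inverse property I would argue directly from the definition recalled in Section~\ref{sec:background}. Writing every element uniquely as $s = q^a p^b$ and using the identity $p^b q^b = 1$ (which follows from $pq = 1$), I would verify that $x := q^b p^a$ satisfies $sxs = s$ and $xsx = x$, so that $B$ is regular. For uniqueness I would note that the idempotents of $B$ are precisely the elements $q^a p^a$ and that they commute, since $q^a p^a \cdot q^c p^c = q^{\max(a,c)} p^{\max(a,c)}$; thus $B$ is a regular monoid whose idempotents commute, and by the standard characterization of inverse semigroups this makes $B$ an inverse monoid, with the inverse necessarily unique.

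The real work, and the step I expect to be the genuine obstacle, is amenability. Here the cancellative hypothesis of Proposition~\ref{p:canc-amen-are-sofic} fails (for instance $p \cdot 1 = p \cdot qp = p$ while $1 \neq qp$), so amenability cannot be imported from an ambient group, and the naive Følner argument must be handled with care because right translations in $B$ are not injective. I would establish amenability by exhibiting the explicit Følner sequence $F_N := \{q^a p^b : 0 \le a,b \le N\}$, with $|F_N| = (N+1)^2$, and verifying the \emph{strong} Følner condition, that is, controlling the $\ell^1$ distance $\beta_N(s) := \sum_{u} \lvert m_{N,s}(u) - \mathbf{1}_{F_N}(u) \rvert$ between the pushforward of the counting measure on $F_N$ under right multiplication by $s$ and the counting measure itself, where $m_{N,s}(u)$ denotes the number of $t \in F_N$ with $ts = u$. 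A direct computation gives $\beta_N(p) = \beta_N(q) = 2(N+1)$, and since pushforward is an $\ell^1$-contraction one obtains the subadditivity $\beta_N(s_1 s_2) \le \beta_N(s_1) + \beta_N(s_2)$, whence $\beta_N(q^a p^b) \le 2(a+b)(N+1)$. As this is $O(N)$ for each fixed $s$ while $|F_N|$ grows quadratically, the strong Følner condition holds and $B$ is right amenable; being inverse, $B$ is then left amenable as well through its inversion anti-automorphism $q^a p^b \mapsto q^b p^a$, hence amenable. Alternatively one could invoke the fact that an inverse semigroup whose maximal group homomorphic image is amenable is itself amenable, together with the observation that $q^a p^b \mapsto a - b$ realizes $\Z$ as the maximal group image of $B$.

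Putting these three facts together with Theorem~\ref{t:bicyclic-not-sofic}, the bicyclic monoid $B$ is a finitely presented amenable inverse monoid that is not sofic, which proves the corollary.
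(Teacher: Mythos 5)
Your proposal is correct and takes the same route as the paper: exhibit the bicyclic monoid, already shown non-sofic in Theorem~\ref{t:bicyclic-not-sofic}, as the desired example. The only difference is that the paper simply cites Duncan--Namioka for the fact that $B$ is an amenable inverse monoid, whereas you verify this from scratch (regularity plus commuting idempotents $q^ap^a$ for the inverse property, and a correct F\o lner/Day-type argument with $F_N=\{q^ap^b: 0\le a,b\le N\}$ together with the inversion anti-automorphism for two-sided amenability); your computations check out.
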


\begin{proof}
It is known that the bicyclic monoid is an amenable inverse monoid
(see for example \cite[Example 2, page 311]{duncan-namioka}).
\end{proof}

\begin{remark}
It follows from Corollary~\ref{c:amenable-non-sofic-exist}
that we cannot remove the cancellativity hypothesis in 
Proposition~\ref{p:canc-amen-are-sofic}.
\end{remark}

\begin{corollary}
\label{c:containig-bicyclic-implies-nonsofic}
Every monoid containing a submonoid isomorphic to the bicyclic monoid is non-sofic.
\end{corollary}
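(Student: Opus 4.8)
The plan is to argue by contraposition, using the two ingredients already available: the non-soficity of the bicyclic monoid (Theorem~\ref{t:bicyclic-not-sofic}) and the hereditary property of soficity under passage to submonoids (Proposition~\ref{p;subgroups-sofic}). Let $M$ be a monoid containing a submonoid $N \subset M$ with $N \cong B$, the bicyclic monoid. Suppose, for contradiction, that $M$ is sofic. Then by Proposition~\ref{p;subgroups-sofic} the submonoid $N$ is sofic as well.

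The one point to verify explicitly is that soficity is an isomorphism invariant, so that $N \cong B$ forces $B$ to be sofic. This is immediate from Definition~\ref{d:sofic-semigroup}: if $\theta \colon B \to N$ is a monoid isomorphism and $\varphi \colon N \to \Map(X)$ is a $(K',1-\varepsilon)$-injective $(K',\varepsilon)$-morphism for $K' = \theta(K)$, then $\varphi \circ \theta \colon B \to \Map(X)$ is a $(K,1-\varepsilon)$-injective $(K,\varepsilon)$-morphism, since $\theta$ strictly preserves products and the identity element and hence neither creates nor destroys any of the defining inequalities. Thus $B$ inherits soficity from $N$.

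But this contradicts Theorem~\ref{t:bicyclic-not-sofic}, which asserts that $B$ is \emph{not} sofic. Hence the assumption that $M$ is sofic is untenable, and $M$ must be non-sofic.

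There is essentially no obstacle here: the statement is a formal consequence of results already established, and the only substantive input beyond Proposition~\ref{p;subgroups-sofic} and Theorem~\ref{t:bicyclic-not-sofic} is the transport of soficity along an isomorphism, which is routine. In writing this up one could even compress the argument to a single sentence invoking the two cited results together with the invariance of soficity under isomorphism, and the proof could legitimately be terminated with \qed immediately after the statement if one regards the isomorphism invariance as self-evident.
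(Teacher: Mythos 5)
Your proposal is correct and follows exactly the paper's own argument, which simply cites Proposition~\ref{p;subgroups-sofic} together with Theorem~\ref{t:bicyclic-not-sofic}; your explicit verification that soficity is an isomorphism invariant is a detail the paper leaves implicit, and it is handled correctly.
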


\begin{proof}
This immediately follows from Proposition~\ref{p;subgroups-sofic} and 
Theorem~\ref{t:bicyclic-not-sofic}.
\end{proof}

\begin{corollary}
\label{c:map-X-not-sofic}
Let $X$ be an infinite set. Then the symmetric monoid $\Map(X)$ is not sofic.
\end{corollary}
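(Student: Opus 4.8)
The plan is to reduce the statement to the non-soficity of the bicyclic monoid established in Theorem~\ref{t:bicyclic-not-sofic}. By Corollary~\ref{c:containig-bicyclic-implies-nonsofic}, it suffices to exhibit a submonoid of $\Map(X)$ isomorphic to the bicyclic monoid $B$. Since we have already observed that $B$ embeds as a submonoid of $\Map(\N)$ (via the maps $p$ and $q$ described at the beginning of this section), the whole problem comes down to producing an injective monoid morphism $\Map(\N) \to \Map(X)$; composing the embedding $B \hookrightarrow \Map(\N)$ with such a morphism then yields the desired copy of $B$ inside $\Map(X)$.

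To build this morphism, I would first use the hypothesis that $X$ is infinite to fix an injection $\iota \colon \N \to X$, and write $Y := \iota(\N)$ for its image and $Z := X \setminus Y$ for the complement. Given $f \in \Map(\N)$, I would define its extension $\widetilde{f} \in \Map(X)$ by transporting the action of $f$ to $Y$ through $\iota$ and letting $\widetilde{f}$ fix every point of $Z$; explicitly,
\[
\widetilde{f}(x) =
\begin{cases}
\iota\big(f(\iota^{-1}(x))\big) & \text{if } x \in Y,\\
x & \text{if } x \in Z.
\end{cases}
\]
I would then check the three required properties: $\widetilde{\Id_\N} = \Id_X$, the compatibility $\widetilde{fg} = \widetilde{f}\,\widetilde{g}$, and injectivity of the assignment $f \mapsto \widetilde{f}$.

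The verification is routine but is the only place where anything must be confirmed. For the compatibility, on $Y$ both sides evaluate to $\iota\big(f(g(\iota^{-1}(x)))\big)$ because $\widetilde{g}$ maps $Y$ into $Y$, while on $Z$ both sides fix $x$ since $\widetilde{g}$ fixes $Z$ pointwise and $\widetilde{f}$ fixes it thereafter; the crucial point is precisely that acting by the identity on the complement $Z$ is stable under composition. Injectivity is immediate, since $\widetilde{f} = \widetilde{g}$ forces agreement on $Y$ and hence $f = g$ after removing $\iota$. Thus $f \mapsto \widetilde{f}$ is an injective monoid morphism $\Map(\N) \to \Map(X)$, so $\Map(X)$ contains a submonoid isomorphic to $\Map(\N)$ and in particular a submonoid isomorphic to $B$. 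Applying Corollary~\ref{c:containig-bicyclic-implies-nonsofic} (equivalently, Proposition~\ref{p;subgroups-sofic} together with Theorem~\ref{t:bicyclic-not-sofic}), we conclude that $\Map(X)$ is not sofic.
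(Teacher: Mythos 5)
Your proposal is correct and follows essentially the same route as the paper: the paper's proof also reduces to Corollary~\ref{c:containig-bicyclic-implies-nonsofic} by observing that $\Map(X)$ contains a submonoid isomorphic to $\Map(\N)$, which in turn contains a copy of the bicyclic monoid. You merely make explicit the standard embedding $\Map(\N) \hookrightarrow \Map(X)$ (extend by the identity off a fixed copy of $\N$ in $X$) that the paper leaves implicit, and your verification of it is accurate.
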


\begin{proof}
This follows from Corollary~\ref{c:containig-bicyclic-implies-nonsofic}
since $\Map(X)$ contains a submonoid isomorphic to $\Map(\N)$ and 
hence a submonoid isomorphic to the bicyclic monoid.
\end{proof}

\begin{corollary}
Let $K$ be a field and let $E$ be an infinite-dimensional vector space over $K$.
Let $\LL(E)$ denote the monoid consisting of all endomorphisms of $E$ with the composition of maps as the monoid operation. Then $\LL(E)$  is not sofic.
\end{corollary}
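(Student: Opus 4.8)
The plan is to reduce to Corollary~\ref{c:containig-bicyclic-implies-nonsofic}: since that result guarantees that any monoid containing a copy of the bicyclic monoid $B$ is non-sofic, it suffices to realize $B$ as a submonoid of $\LL(E)$. The idea is to mimic inside $\LL(E)$ the shift representation of $B$ on $\Map(\N)$ recalled just before Theorem~\ref{t:bicyclic-not-sofic}. Since $E$ is infinite-dimensional, I would first choose a countable linearly independent family $(e_n)_{n \in \N}$ in $E$ and extend it to a basis $(e_n)_{n \in \N} \cup (f_j)_{j \in J}$ of $E$.

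Next I would define two endomorphisms $P, Q \in \LL(E)$ by prescribing their values on this basis. On the family $(e_n)$ I let $Q$ be the forward shift $Q(e_n) = e_{n+1}$ and $P$ the backward shift $P(e_0) = 0$, $P(e_n) = e_{n-1}$ for $n \geq 1$; on the remaining basis vectors I set $P(f_j) = Q(f_j) = f_j$. A direct check gives $PQ = \Id_E$: indeed $PQ(e_n) = P(e_{n+1}) = e_n$ for every $n$, and $PQ(f_j) = f_j$. Since $\Id_E = 1_{\LL(E)}$, the universal property of the presentation $B = \langle p, q : pq = 1\rangle$ yields a monoid morphism $\Theta \colon B \to \LL(E)$ with $\Theta(p) = P$ and $\Theta(q) = Q$, whose image is the submonoid of $\LL(E)$ generated by $P$ and $Q$.

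It remains to prove that $\Theta$ is injective, which is the crucial point and the only place where a genuine computation is needed. Recall that every element of $B$ is uniquely of the form $q^a p^b$ with $a, b \geq 0$, so injectivity of $\Theta$ amounts to showing that the operators $Q^a P^b$ are pairwise distinct. Evaluating on the chosen vectors, one finds $Q^a P^b(e_n) = e_{n-b+a}$ for $n \geq b$ and $Q^a P^b(e_n) = 0$ for $n < b$, while $Q^a P^b(f_j) = f_j$. From this formula the pair $(a,b)$ can be recovered from $Q^a P^b$: the exponent $b$ is exactly the number of basis vectors $e_n$, $n \in \N$, that are mapped to $0$ by $Q^a P^b$, and then $a$ is read off from $Q^a P^b(e_b) = e_a$. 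Hence distinct pairs $(a,b)$ yield distinct operators, so $\Theta$ is injective and its image is a submonoid of $\LL(E)$ isomorphic to $B$. Corollary~\ref{c:containig-bicyclic-implies-nonsofic} then shows that $\LL(E)$ is not sofic.

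I expect the main obstacle to be precisely the injectivity of $\Theta$, that is, confirming that the submonoid generated by $P$ and $Q$ is a faithful copy of $B$ rather than a proper quotient; the explicit action on the $e_n$ handles this cleanly, the complementary basis vectors $(f_j)$ playing no role beyond making $P$ and $Q$ honest endomorphisms of all of $E$ while preserving the defining relation $PQ = \Id_E$.
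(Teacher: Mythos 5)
Your argument is correct, and it reaches the conclusion by a more direct route than the paper. The paper's proof is a two-line reduction: it sends each $f \in \Map(X)$ (where $(e_x)_{x \in X}$ is a basis of $E$) to the endomorphism $e_x \mapsto e_{f(x)}$, thereby embedding the full symmetric monoid $\Map(X)$ into $\LL(E)$, and then invokes Corollary~\ref{c:map-X-not-sofic} (itself a consequence of Corollary~\ref{c:containig-bicyclic-implies-nonsofic} via the realization of the bicyclic monoid inside $\Map(\N)$ by $p(0)=0$, $p(n)=n-1$, $q(n)=n+1$). You instead build a copy of $B$ inside $\LL(E)$ in one step, with the genuinely linear feature that $P(e_0)=0$ rather than $P(e_0)=e_0$ --- so your $P$ is not induced by a self-map of the basis, and your embedding is not the composite of the paper's two embeddings. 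The trade-off: the paper's proof is shorter because it reuses Corollary~\ref{c:map-X-not-sofic}, whereas yours is self-contained modulo Corollary~\ref{c:containig-bicyclic-implies-nonsofic} and requires the injectivity verification you carry out (recovering $b$ as the number of annihilated basis vectors $e_n$ and then $a$ from $Q^aP^b(e_b)=e_a$), which is correct and is exactly the point that cannot be skipped when realizing $B$ as a quotient-free image of the presentation $\langle p,q : pq=1\rangle$. Both arguments ultimately rest on Theorem~\ref{t:bicyclic-not-sofic} and Proposition~\ref{p;subgroups-sofic}.
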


\begin{proof}
Let $(e_x)_{x \in X}$ be a basis of $E$.
Then the symmetric monoid $\Map(X)$ embeds into $\LL(E)$ via the map that sends each $f \in \Map(X)$ to the unique endomorphism $u$ of $E$ such that $u(e_x) = e_{f(x)}$ for all $x \in X$.
Thus $\LL(E)$ is not sofic by Corollary~\ref{c:map-X-not-sofic}.
\end{proof}

\begin{remark}
It would be interesting to give an example of a cancellative non-sofic monoid. Note that the
bicyclic monoid is neither left nor right-cancellative.
\end{remark}

% SECTION 6
\section{A graph-theoretic characterization of finitely generated left-cancellative sofic monoids}
\label{sec:graph-characterization}

In this section, we relate the notion of soficity for finitely generated monoids to a certain finiteness condition on their Cayley graphs.

Let $M$ be a finitely generated monoid and let $\Sigma \subset M$ be a finite generating subset of $M$. Let $\GG = (V,E)$ be a $\Sigma$-labeled graph. 
For $r \in \N$, we denote by $V(r) \subset V$ the set of vertices $v \in V$ for which there exists a $\Sigma$-labeled pointed graph isomorphism 
\begin{equation}
\label{eq:psi}
\psi_{v,r} \colon B_r(1_M) \to B_r(v),
\end{equation}
where $B_r(1_M) := B_r^{\CC(M,\Sigma)}(1_M)$ is the ball of radius $r$ centered at $1_M$ in the Cayley graph $\CC(M,\Sigma)$ and $B_r(v) := B_r^\GG(v)$ is the ball of radius $r$ centered at $v$ in $\GG$. Note that since $\psi_{v,r}$ is pointed we have
\begin{equation}
\label{eq:psi-pointed}
\psi_{v,r} (1_M) = v.
\end{equation}
Moreover, if it exists, $\psi_{v,r}$ is necessarily unique. 
Note also the obvious inclusions   
 \[
V = V(0)  \supset V(1) \supset V(2) \supset \dots.
\]

We say that the pair $(M,\Sigma)$ satisfies the \emph{Weiss condition} provided that,
for every $r \in \N$ and every $\delta > 0$, there exists a finite $\Sigma$-labeled graph 
$\GG = (V,E)$ satisfying
\begin{equation}
\label{eq:condition-weiss}
\vert V(r) \vert \geq (1-\delta) \vert V \vert.
\end{equation}

\begin{theorem}
\label{t:weiss-characterization}
Let $M$ be a finitely generated monoid and let $\Sigma \subset M$ be a finite generating subset of $M$. Then the following holds:
\begin{enumerate}[{\rm (1)}]
\item 
if $(M, \Sigma)$ satisfies the Weiss condition, then $M$ is sofic;
\item 
if $M$ is left-cancellative and sofic, then $(M, \Sigma)$ satisfies the Weiss condition. 
\end{enumerate}
\end{theorem}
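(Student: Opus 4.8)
The plan is to set up a dictionary between sofic approximations $\varphi\colon M\to\Map(X)$ and finite $\Sigma$-labeled graphs whose $r$-balls agree with the ball $B_r(1_M)$ of the Cayley graph $\CC(M,\Sigma)$ at a large fraction of vertices, the vertex set of the graph playing the role of the set $X$. I fix once and for all the maximal $\Sigma$-word length $\ell$ of the elements of the finite set $K$ at hand, so that $K$ and all products $k_1k_2$ with $k_1,k_2\in K$ lie in $B_\ell(1_M)$; the radius $r$ will always be chosen much larger than $\ell$.

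For implication (1), I assume the Weiss condition and fix a finite $K\subset M$ (which I may take to contain $1_M$) and $\varepsilon>0$. First I would choose $r\ge 2\ell$, then $\delta>0$ small relative to $\varepsilon/|K|^2$, and invoke the Weiss condition to produce a finite $\Sigma$-labeled graph $\GG=(V,E)$ with $|V(r)|\ge(1-\delta)|V|$. Setting $X:=V$, I define $\varphi\colon M\to\Map(X)$ on $K$ by $\varphi(k)(v):=\psi_{v,r}(k)$ for $v\in V(r)$ and $\varphi(k)(v):=v$ otherwise (with $\varphi$ fixed arbitrarily off $K$); since $\psi_{v,r}(1_M)=v$, the map $\varphi(1_M)$ agrees with $\Id_X$ on $V(r)$. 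The $(K,1-\varepsilon)$-injectivity is then immediate: distinct $k,k'\in K$ are distinct vertices of $B_r(1_M)$, so the bijection $\psi_{v,r}$ separates them at every $v\in V(r)$, whence $d_X^{\text{Ham}}(\varphi(k),\varphi(k'))\ge 1-\delta\ge 1-\varepsilon$. The substantive step is the $(K,\varepsilon)$-morphism estimate: for $k_1,k_2\in K$ and a good vertex $v$ I would read off, from the uniqueness and the mutual compatibility along connecting paths of the pointed label-graph isomorphisms centered at $v$ and at the neighboring vertex $\psi_{v,r}(k_2)$, an exact identity relating $\varphi(k_1)\bigl(\varphi(k_2)(v)\bigr)$ to $\varphi(k_1k_2)(v)$; the set of vertices where this identity can fail then has size $O(\delta\,|V|)$, so the Hamming distance is $\le\varepsilon$ by the choice of $\delta$. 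This exhibits $\varphi$ as a $(K,1-\varepsilon)$-injective $(K,\varepsilon)$-morphism and shows $M$ is sofic.

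For implication (2), I assume $M$ left-cancellative and sofic, and fix $r\in\N$ and $\delta>0$, aiming to build a finite graph witnessing the Weiss condition. I would put $K:=B_r(1_M)$ (a finite set, as $\Sigma$ is finite) and, choosing $\varepsilon>0$ small and using Proposition~\ref{p:equiv-def-sofic} with an injectivity constant close to $1$, obtain a $(K,1-\varepsilon)$-injective $(K,\varepsilon)$-morphism $\varphi\colon M\to\Map(X)$. Here left-cancellativity enters decisively: since left multiplication by each generator is injective on $M$, the maps $\varphi(\sigma)$ are forced to be almost injective, and after discarding a small set of points I may assume each $\varphi(\sigma)$ is a bijection of $X$; this lets me define a finite $\Sigma$-labeled graph $\GG$ on the vertex set $V:=X$ whose $\sigma$-edges follow $\varphi(\sigma)$. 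For all but an $O(\varepsilon)$-fraction of vertices $x$, the candidate pointed map $B_r(1_M)\to B_r(x)$ assembled from $\varphi$ along all $\Sigma$-words of length $\le r$ is well defined (approximate-morphism property), injective ($(K,1-\varepsilon)$-injectivity), and---crucially because $M$ is left-cancellative, so that $B_r(1_M)$ is rigid and admits no collapsing of vertices or edges---a genuine label-graph isomorphism rather than a proper quotient. A union bound over the boundedly many words of length $\le r$ then yields $|V(r)|\ge(1-\delta)|V|$ once $\varepsilon$ is small enough, which is the Weiss condition.

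The main obstacle, in both directions, is the reconciliation of the \emph{one-sided} (right-multiplicative) structure carried by the Cayley-graph edges $(s,\sigma,s\sigma)$ with the \emph{two-sided} approximate-morphism condition defining soficity. In (1) this is the crux of the morphism estimate: I must verify, purely from the compatibility of the canonical pointed isomorphisms $\psi_{v,r}$, that the composite $\varphi(k_1)\circ\varphi(k_2)$ reproduces $\varphi(k_1k_2)$---with the product taken in the correct order---on all but a negligible set of vertices, and control that exceptional set. In (2) the dual obstacle is to certify that the locally reconstructed balls are honest copies of $B_r(1_M)$; this is precisely where left-cancellativity is indispensable, since without it the reconstructed ball could be a proper quotient, and it is this asymmetry that explains why the converse implication, unlike the direct one, requires the hypothesis.
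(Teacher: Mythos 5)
Your overall architecture is the same as the paper's: in both directions the finite set $X$ and the vertex set $V$ are identified, with $\varphi(s)(v):=\psi_{v,r}(s)$ on the good vertices in one direction and $\sigma$-edges $(v,\sigma,\varphi(\sigma)(v))$ in the other. But each direction has a genuine gap precisely at the step you defer.

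In (1), the ``exact identity'' you promise to read off is the entire content of the implication, and it does not follow merely from ``uniqueness and mutual compatibility'' of the $\psi_{v,r}$. To evaluate $\varphi(k_1)\bigl(\varphi(k_2)(v)\bigr)$ you must apply $\varphi(k_1)$ at the intermediate vertex $u:=\psi_{v,r}(k_2)$, and by your own definition $\varphi(k_1)(u)=\psi_{u,r}(k_1)$ only if $u$ is itself a good vertex, i.e.\ $B_{r'}(u)\cong B_{r'}(1_M)$ for a suitable $r'$. Since $\psi_{v,r}$ only gives $B_{r'}(u)\cong B_{r'}(k_2)$, what is actually needed is that left translation $L_{k_2}$ induces a labeled-graph isomorphism $B_{r'}(1_M)\to B_{r'}(k_2)$ inside $\CC(M,\Sigma)$; this is the content of the paper's Lemmas~\ref{l:1} and~\ref{l:2} and requires a separate argument (it is exactly what fails for the bicyclic monoid). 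Moreover, granting it, the identity one obtains is $\varphi(k_1)\bigl(\varphi(k_2)(v)\bigr)=\psi_{v,r}(k_2k_1)=\varphi(k_2k_1)(v)$ --- the factors come out in the opposite order, because Cayley edges $(s,\sigma,s\sigma)$ encode right multiplication while $fg$ in $\Map(X)$ means $f\circ g$. You correctly flag this order problem as the crux but never resolve it; the paper absorbs it by systematically writing $v^{f}$ and composing exponents left to right, which amounts to working in $\Map(V)^{op}$.

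In (2), the step ``left-cancellativity forces the $\varphi(\sigma)$ to be almost injective, so I may assume each $\varphi(\sigma)$ is a bijection'' is false. Take $M=\N$, $\Sigma=\{1\}$, $X=\{0,1\}\times\Z/N\Z$ with $N>r$, and $f(i,j):=(0,j+1)$; then $\varphi(n):=f^{n}$ is an exact monoid morphism and satisfies $d_X^{\text{Ham}}(\varphi(m),\varphi(n))=1$ for all distinct $m,n\in\{0,1,\dots,r\}$, yet $f$ is $2$-to-$1$ and hence at Hamming distance at least $1/2$ from every bijection of $X$. Fortunately this step is unnecessary: the edge set $\{(v,\sigma,\varphi(\sigma)(v))\}$ makes perfectly good sense for non-injective $\varphi(\sigma)$, and the injectivity you actually need is that of the reconstruction map $s\mapsto\varphi(s)(v)$ on $B_r(1_M)$ for most $v$, which follows from the $(K,1-\varepsilon)$-injectivity of $\varphi$ with $K\supset B_r(1_M)$ by a union bound over the $|B_r(1_M)|^2$ pairs --- this is the paper's condition (**), paired with a union bound over $|B_r(1_M)|\cdot|\Sigma|$ incidences for the local morphism condition (*). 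With the bijectivity step deleted and replaced by these two union bounds, your sketch of (2) becomes the paper's proof.
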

\begin{proof}[Proof of Theorem \ref{t:weiss-characterization}.(1)]
Suppose that the pair $(M,\Sigma)$ satisfies the Weiss condition. In order to prove that
$M$ is sofic, let $K \subset M$ be a finite subset and $\varepsilon > 0$. We want to show that
there exist a non-empty finite set $X$ and a $(K,1 - \varepsilon)$-injective 
$(K,\varepsilon)$-morphism $\varphi \colon M \to \Map(X)$. 
Choose  $r_0 \in \N$ large enough so that  $K \cup K^2 \subset B_{r_0}(1_M)$.
Let $\GG = (V,E)$ be a finite $\Sigma$-labeled graph satisfying condition \eqref{eq:condition-weiss} for $r = 2r_0$ and $\delta = \varepsilon$. 
Note that \eqref{eq:condition-weiss}
now becomes
\begin{equation}
\label{eq:condition-weiss-2}
\vert V(r) \vert \geq (1-\varepsilon) \vert V \vert.
\end{equation}
Consider the map $\varphi \colon M \to \Map(V)$ defined by setting
\[
v^{\varphi(s)} = 
\begin{cases}
\psi_{v,r}(s) & \mbox{ if } s \in B_r(1_M) \mbox{ and } v \in V(r)\\
v & \mbox{ otherwise}
\end{cases}
\]
where $\psi_{v,r}$ is as in \eqref{eq:psi}. Let us show that $\varphi$ has the required
properties by taking $X:=V$.
\par
Let $k_1, k_2 \in K$ and $v \in V(r)$. 
We have 
\[
\begin{split}
v^{\varphi(k_1k_2)} & = \psi_{v,r}(k_1k_2)\\
& = \psi_{\psi_{v,r}(k_1),r}(k_2) \ \ \ \mbox{(by \eqref{eq:psi-2})}\\
& = (\psi_{v,r}(k_1))^{\varphi(k_2)}\\
& = (v^{\varphi(k_1)})^{\varphi(k_2)}\\
& = v^{\varphi(k_1)\varphi(k_2)}.
\end{split}
\]
This shows that the maps $\varphi(k_1k_2)$ and $\varphi(k_1)\varphi(k_2)$ coincide on $V(r)$.
Therefore, we deduce 
from \eqref{eq:condition-weiss-2}   that $d_V^{\text{Ham}}(\varphi(k_1k_2),\varphi(k_1)\varphi(k_2))
\leq \varepsilon$.
\par
Finally, suppose that $k_1$ and $k_2$ are distinct elements in $K$. 
We  have
\[
v^{\varphi(k_1)} = \psi_{v,r}(k_1) \neq \psi_{v,r}(k_2) = v^{\varphi(k_2)}
\]
since $\psi_{v,r}$ is injective.
From \eqref{eq:condition-weiss-2}, we then  deduce that 
$d_V^{\text{Ham}}(\varphi(k_1),\varphi(k_2))
\geq 1 -\varepsilon$. It follows that $\varphi$ is   $(K,1 - \varepsilon)$-injective.
\par 
Consequently, the monoid $M$ is sofic. This proves Theorem \ref{t:weiss-characterization}.(1).
\end{proof}

In order to prove Theorem \ref{t:weiss-characterization}.(2) let us first establish some auxiliary results.

\begin{lemma}
\label{semilemme}
Let $M$ be a sofic monoid, $K \subset M$ a finite subset, and $\varepsilon > 0$.
Then there exist a non-empty finite set $X$ and a 
$(K,1 - \varepsilon)$-injective $(K,\varepsilon)$-morphism 
$\varphi \colon M \to \Map(X)$ satisfying  $\varphi(1_M) = \Id_X$.
\end{lemma}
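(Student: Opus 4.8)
The plan is to take any sofic approximation for $K$ and surgically redefine it at the single element $1_M$, setting its value there to $\Id_X$ exactly, while controlling the resulting damage to the morphism and injectivity estimates by the triangle inequality. Concretely, I would first enlarge the test set to $\widehat{K} := K \cup \{1_M\}$ and invoke Definition~\ref{d:sofic-semigroup} with the halved tolerance $\varepsilon' := \varepsilon/2$. This yields a non-empty finite set $X$ and a $(\widehat K, 1 - \varepsilon')$-injective $(\widehat K, \varepsilon')$-morphism $\varphi_0 \colon M \to \Map(X)$. The key quantitative input I retain from $\varphi_0$ is the estimate $d_X^{\text{Ham}}(\varphi_0(1_M), \Id_X) \le \varepsilon'$, which is part of the very notion of a $(\widehat K, \varepsilon')$-morphism.

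Next I would define $\varphi \colon M \to \Map(X)$ by $\varphi(1_M) := \Id_X$ and $\varphi(s) := \varphi_0(s)$ for every $s \in M \setminus \{1_M\}$, so that $\varphi$ and $\varphi_0$ differ only at the point $1_M$. By construction $\varphi(1_M) = \Id_X$, as required, and it remains to check that $\varphi$ is still a $(K, 1 - \varepsilon)$-injective $(K, \varepsilon)$-morphism. For the morphism property I would run through the cases according to whether $k_1$, $k_2$, or their product equals $1_M$. When $k_1 = 1_M$ or $k_2 = 1_M$ the two sides $\varphi(k_1 k_2)$ and $\varphi(k_1)\varphi(k_2)$ coincide exactly, since $\Id_X$ is a two-sided identity in $\Map(X)$; when neither factor nor the product is $1_M$, both sides agree with those of $\varphi_0$, so the bound $\varepsilon' \le \varepsilon$ is inherited.

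The only genuinely new case---and the one I expect to be the crux---is $k_1, k_2 \neq 1_M$ with $k_1 k_2 = 1_M$, which really can occur in a non-cancellative monoid (for instance $pq = 1$ in the bicyclic monoid). Here $\varphi(k_1 k_2) = \Id_X$ while $\varphi(k_1)\varphi(k_2) = \varphi_0(k_1)\varphi_0(k_2)$, and the triangle inequality together with $k_1 k_2 = 1_M$ gives
\[
d_X^{\text{Ham}}(\Id_X, \varphi_0(k_1)\varphi_0(k_2)) \le d_X^{\text{Ham}}(\Id_X, \varphi_0(1_M)) + d_X^{\text{Ham}}(\varphi_0(k_1 k_2), \varphi_0(k_1)\varphi_0(k_2)) \le \varepsilon' + \varepsilon' = \varepsilon .
\]
This is precisely why the halving $\varepsilon' = \varepsilon/2$ is needed: the surgery at $1_M$ can only hurt a product estimate when that product collapses to $1_M$, and then the loss is at most one extra $\varepsilon'$.

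Finally, for injectivity I would take distinct $k_1, k_2 \in K$. If neither is $1_M$, then $\varphi(k_i) = \varphi_0(k_i)$ and the $(\widehat K, 1 - \varepsilon')$-injectivity of $\varphi_0$ gives $d_X^{\text{Ham}}(\varphi(k_1), \varphi(k_2)) \ge 1 - \varepsilon' \ge 1 - \varepsilon$. If one of them is $1_M$, say $k_1 = 1_M$, then the reverse triangle inequality combining $d_X^{\text{Ham}}(\varphi_0(1_M), \varphi_0(k_2)) \ge 1 - \varepsilon'$ with $d_X^{\text{Ham}}(\varphi_0(1_M), \Id_X) \le \varepsilon'$ yields $d_X^{\text{Ham}}(\Id_X, \varphi_0(k_2)) \ge 1 - 2\varepsilon' = 1 - \varepsilon$; here I use that $1_M, k_2 \in \widehat K$ are distinct, which is exactly the reason for enlarging the test set to $\widehat K$ at the outset. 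Assembling these estimates shows that $\varphi$ is the desired $(K, 1 - \varepsilon)$-injective $(K, \varepsilon)$-morphism with $\varphi(1_M) = \Id_X$. The whole argument is elementary once the collapsing case $k_1 k_2 = 1_M$ is isolated, as that is the only place where non-cancellativity intervenes and it is absorbed by the factor of two in the tolerance.
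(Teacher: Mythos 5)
Your proposal is correct and follows essentially the same route as the paper's proof: halve the tolerance, redefine the approximation at $1_M$ to be exactly $\Id_X$, and absorb the only delicate case $k_1 k_2 = 1_M$ with $k_1,k_2 \neq 1_M$ via the triangle inequality, losing at most $\varepsilon/2$ in each estimate. The only cosmetic difference is your enlargement of the test set to $K \cup \{1_M\}$, which is harmless but not actually needed, since the bound $d_X^{\text{Ham}}(\varphi_0(1_M),\Id_X) \leq \varepsilon'$ is already part of the definition of a $(K,\varepsilon')$-morphism and the injectivity case involving $1_M$ only arises when $1_M \in K$ to begin with.
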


\begin{proof}
Since $M$ is sofic, we can find a non-empty finite set $X$ and a 
$(K,1 - \varepsilon/2)$-injective $(K,\varepsilon/2)$-morphism 
$\varphi' \colon M \to \Map(X)$. Let us show that the map $\varphi \colon M \to \Map(X)$
defined by setting $\varphi(s) = \varphi'(s)$ for all $s \in M \setminus \{1_M\}$ and
$\varphi(1_M) = \Id_X$ satisfies our requirements.
\par
Let $k_1,k_2 \in K$. 
If $k_1,k_2,k_1 k_2 \in K \setminus \{1_M\}$, we have 
\[
d_X^{\text{Ham}}(\varphi(k_1 k_2), \varphi(k_1)\varphi(k_2)) = d_X^{\text{Ham}}(\varphi'(k_1 k_2), \varphi'(k_1)\varphi'(k_2)) \leq \varepsilon/2 \leq \varepsilon.
\]
If $k_1 = 1_M$, then
$
d_X^{\text{Ham}}(\varphi(k_1 k_2), \varphi(k_1)\varphi(k_2))
= d_X^{\text{Ham}}(\varphi(k_2),   \varphi(k_2)) = 0
$.
Similarly, if $k_2 = 1_M$, then 
$
d_X^{\text{Ham}}(\varphi(k_1 k_2), \varphi(k_1)\varphi(k_2)) = d_X^{\text{Ham}}(\varphi(k_1), \varphi(k_1)) =0
$.
\par
Suppose now that $k_1 \not= 1_M \not= k_2$ and $k_1 k_2 = 1_M$.
Then we have 
\[
 d_X^{\text{Ham}}(\varphi(k_1 k_2), \varphi(k_1)\varphi(k_2))
 = d_X^{\text{Ham}}(\Id_X,\varphi'(k_1)\varphi'(k_2))
  \]
  which implies, by using the triangle inequality,
\begin{align*}
 d_X^{\text{Ham}}(\varphi(k_1 k_2), \varphi(k_1)\varphi(k_2)) 
 &\leq d_X^{\text{Ham}}(\varphi'(1_M),\Id_X) + d_X^{\text{Ham}}(\varphi'(1_M),\varphi'(k_1)\varphi'(k_2)) \\
 &\leq \varepsilon/2 + \varepsilon/2 = \varepsilon. 
\end{align*}
This shows that $\varphi$ is a $(K,\varepsilon)$-morphism.
To complete the proof, it remains only to check that $\varphi$ is $(1 - \varepsilon)$-injective.
 \par
Suppose that $k_1$ and $k_2$ are distinct elements in $K \setminus \{1_M\}$.
Then we have
\[
d_X^{\text{Ham}}(\varphi(k_1),  \varphi(k_2))
= d_X^{\text{Ham}}(\varphi'(k_1),  \varphi'(k_2))
\geq 1 - \varepsilon/2 \geq 1 - \varepsilon.
\]
On the other hand, if $1_M \in K$ and $1_M \not= k \in K$, then we have
\begin{align*}
d_X^{\text{Ham}}(\varphi(1_M),  \varphi(k))
&= d_X^{\text{Ham}}(\Id_X,  \varphi'(k)) \\
& \geq d_X^{\text{Ham}}(\varphi'(1_M),\varphi'(k)) - d_X^{\text{Ham}}(\varphi'(1_M),\Id_X)    
&& \text{(by the triangle inequality)} \\
&\geq 1 - \varepsilon/2 - \varepsilon/2 = 1 - \varepsilon. 
\end{align*}
Consequently, $\varphi$ is $(1 - \varepsilon)$-injective.
 \end{proof}
 
\begin{lemma}
\label{l:1} 
Let $M$ be a finitely generated monoid and let $\Sigma \subset M$ be a finite generating subset of $M$. Suppose that $M$ is left-cancellative and let $\GG = (V,E)$ be a $\Sigma$-labeled
graph. Let $r \in \N$ and $v \in V$. If $v \in V(2r)$ then $B_r(v) \subset V(r)$. 
\end{lemma}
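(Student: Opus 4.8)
The plan is to prove the statement directly: fixing $v \in V(2r)$ and an arbitrary vertex $w \in B_r(v)$, I will exhibit a pointed $\Sigma$-labeled graph isomorphism $B_r(1_M) \to B_r(w)$, which is exactly what membership $w \in V(r)$ requires. Since $v \in V(2r)$, I have at my disposal the pointed isomorphism $\psi := \psi_{v,2r} \colon B_{2r}(1_M) \to B_{2r}(v)$ with $\psi(1_M) = v$. As $w \in B_r(v) \subset B_{2r}(v)$, the vertex $m := \psi^{-1}(w)$ is well defined, and because a label graph isomorphism carries paths to paths of the same length, $\psi$ preserves the distance to the base point; hence $w \in B_r(v)$ forces $m \in B_r(1_M)$. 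The idea is then to factor the desired isomorphism through $B_r(m)$, realized inside the Cayley graph $\CC(M,\Sigma)$.

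The heart of the argument is the following two-step factorization
\[
B_r(1_M) \xrightarrow{L_m} B_r(m) \xrightarrow{\psi} B_r(w).
\]
For the first arrow I would use left multiplication $L_m \colon s \mapsto ms$. Since the Cayley graph edges have the form $(s,\sigma,s\sigma)$ and $(ms)\sigma = m(s\sigma)$, the map $L_m$ sends edges to edges preserving labels; moreover $L_m(B_r(1_M)) = B_r(m)$ by the description $B_r(m) = \{m u : u \in B_r(1_M)\}$, and $L_m$ is injective on $M$ precisely because $M$ is left-cancellative. For the second arrow I would restrict $\psi$ to $B_r(m)$. Here I must check that $\psi$ carries $B_r(m)$ bijectively onto $B_r(w)$: this follows because $m \in B_r(1_M)$ forces $B_r(m) \subset B_{2r}(1_M)$ and $w \in B_r(v)$ forces $B_r(w) \subset B_{2r}(v)$, so that both balls may be computed inside the respective radius-$2r$ balls, and $\psi$, being an isomorphism of these, matches inner balls to inner balls around the corresponding points $m$ and $w=\psi(m)$. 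Composing, $\psi \circ L_m \colon B_r(1_M) \to B_r(w)$ is a pointed label graph isomorphism sending $1_M \mapsto m \mapsto w$, which gives $w \in V(r)$.

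The step that genuinely requires left-cancellativity, and which I expect to be the main point to verify, is that $L_m$ is an isomorphism of the \emph{induced} subgraphs rather than merely a label-preserving bijection. Bijectivity onto $B_r(m)$ uses injectivity of $L_m$ (left-cancellativity), and I also need that $L_m$ reflects edges: if $(mu,\sigma,mu')$ is an edge of the induced subgraph on $B_r(m)$, then $mu' = (mu)\sigma = m(u\sigma)$, whence $u' = u\sigma$ again by left-cancellativity, so $(u,\sigma,u')$ is an edge on $B_r(1_M)$. Without the hypothesis either of these could fail, which is exactly why left-cancellativity appears. The remaining verifications—that the balls nest as claimed, and that a graph isomorphism between the radius-$2r$ balls restricts correctly to the radius-$r$ balls around corresponding points—are routine consequences of the fact that label graph isomorphisms preserve path lengths, and I would dispatch them by the concatenation-of-paths estimates sketched above (any length-$\le r$ path out of $m$ stays within $B_{2r}(1_M)$, and likewise for $w$ within $B_{2r}(v)$).
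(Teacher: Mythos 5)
Your proof is correct and takes essentially the same route as the paper's: both factor the desired isomorphism as $\psi_{v,2r}\circ L_m\vert_{B_r(1_M)} \colon B_r(1_M)\to B_r(m)\to B_r(w)$, with left-cancellativity guaranteeing that $L_m$ restricts to a label graph isomorphism onto $B_r(m)$. The paper states this more tersely, while you spell out the edge-reflection and ball-nesting verifications, but the argument is identical in substance.
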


\begin{proof}
Let $u \in B_r(v)$. 
We clearly have $B_r(u) \subset B_{2r}(v)$. 
Moreover, if $\psi_{v,2r} \colon B_{2r}(1_M) \to B_{2r}(v)$ is the unique  $\Sigma$-labeled pointed graph isomorphism, then setting $s:= \psi_{v,2r}^{-1}(u)$ we have $s \in B_r(1_M)$ so that $st \in B_{2r}(1_M)$ for all $t \in B_r(1_M)$. Since $M$ is left-cancellative the map $L_s \vert_{B_r(1_M)}$
yields a $\Sigma$-labeled pointed graph isomorphism from $B_r(1_M)$ onto $B_r(s)$. 
It follows that the composite map
\begin{equation}
\label{eq:psi-u-r}
\psi_{u,r} = \psi_{v,2r} \circ L_s \vert_{B_r(1_M)} \colon B_r(1_M) \to B_r(u)
\end{equation}
is also a $\Sigma$-labeled pointed graph isomorphism. 
We deduce that $u \in V(r)$.
This shows that $B_r(v) \subset V(r)$. 
\end{proof}

Note that the map $\psi_{u,r}$ defined in \eqref{eq:psi-u-r} explicitly is given by 
$\psi_{u,r}(t) = \psi_{v,2r}(st)$ for all $t \in B_r(1_M)$. 

\begin{lemma}
\label{l:2}
Let $M$ be a finitely generated monoid and let $\Sigma \subset M$ be a finite generating subset of $M$. Suppose that $M$ is left-cancellative and let $\GG = (V,E)$ be a $\Sigma$-labeled
graph. Let $r \in \N$, $s,t \in B_r(1_M)$ and $v \in V(2r)$. 
Then we have
\begin{equation}
\label{eq:psi-1}
\psi_{v,2r}(s) \in V(r)
\end{equation}
and
\begin{equation}
\label{eq:psi-2}
\psi_{v,2r}(st) = \psi_{\psi_{v,2r}(s),r}(t).
\end{equation}
\end{lemma}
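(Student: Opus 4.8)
The plan is to derive both statements directly from Lemma~\ref{l:1} and from the explicit description of the local isomorphism recorded immediately after its proof.

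First I would prove \eqref{eq:psi-1}. Since $s \in B_r(1_M)$, there is a path of length $\leq r$ from $1_M$ to $s$ in the Cayley graph $\CC(M,\Sigma)$, and every vertex on it lies within distance $r \leq 2r$ of $1_M$, so the whole path sits inside $B_{2r}(1_M)$. As $\psi_{v,2r}$ is a $\Sigma$-labeled \emph{pointed} graph isomorphism, it carries edges to edges and fixes the base point, hence transports this path to a path of the same length from $v = \psi_{v,2r}(1_M)$ to $\psi_{v,2r}(s)$ inside $B_{2r}(v) \subset \GG$. Consequently $\psi_{v,2r}(s) \in B_r(v)$. Because $v \in V(2r)$, Lemma~\ref{l:1} gives $B_r(v) \subset V(r)$, and therefore $\psi_{v,2r}(s) \in V(r)$, which is exactly \eqref{eq:psi-1}. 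In particular, the map $\psi_{\psi_{v,2r}(s),r}$ on the right-hand side of \eqref{eq:psi-2} is well defined.

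Next I would establish \eqref{eq:psi-2}. Set $u := \psi_{v,2r}(s)$; by the previous step $u \in V(r)$ and $u \in B_r(v)$, so $u$ is precisely a vertex of the kind considered in Lemma~\ref{l:1}, with $\psi_{v,2r}^{-1}(u) = s$ by injectivity of $\psi_{v,2r}$. The explicit formula noted after the proof of Lemma~\ref{l:1} then reads $\psi_{u,r}(t) = \psi_{v,2r}(st)$ for all $t \in B_r(1_M)$, where left-cancellativity of $M$ is what guarantees that $L_s$ restricts to an isomorphism of the relevant balls. Since $s,t \in B_r(1_M)$, concatenating the two defining paths shows $st \in B_{2r}(1_M)$, so $\psi_{v,2r}(st)$ is defined; substituting $u = \psi_{v,2r}(s)$ yields $\psi_{\psi_{v,2r}(s),r}(t) = \psi_{v,2r}(st)$, which is \eqref{eq:psi-2}.

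The only genuinely delicate points are bookkeeping: that a pointed label-graph isomorphism preserves path length (so that it maps $B_r(1_M)$ into $B_r(v)$ and the center to the center), that left-cancellativity is exactly what makes $L_s$ a ball isomorphism in the cited formula, and that $st \in B_{2r}(1_M)$. Once these are in place, both \eqref{eq:psi-1} and \eqref{eq:psi-2} follow immediately from Lemma~\ref{l:1} and its explicit formula, so I expect no substantial obstacle beyond this careful invocation of the earlier lemma.
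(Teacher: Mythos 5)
Your proof is correct and follows the paper's route: \eqref{eq:psi-1} is obtained exactly as in the paper, by noting $\psi_{v,2r}(s)\in B_r(v)$ and invoking Lemma~\ref{l:1}. For \eqref{eq:psi-2} you take a slightly more direct path than the paper does: you quote the explicit formula $\psi_{u,r}(t)=\psi_{v,2r}(st)$ recorded after Lemma~\ref{l:1} (which identifies the composite $\psi_{v,2r}\circ L_s\vert_{B_r(1_M)}$ with the canonical $\psi_{u,r}$ via the uniqueness of pointed label-graph isomorphisms), whereas the paper re-derives the same identity by pushing the path spelling $t$ forward under $\psi_{v,2r}$ and pulling it back under $\psi_{u,r}$. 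Both arguments rest on the same facts, and your version is legitimate precisely because the paper has already asserted uniqueness of $\psi_{u,r}$; no gap.
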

\begin{proof}
Clearly $\psi_{v,2r}(s) \in B_r(v)$ so that \eqref{eq:psi-1} follows from Lemma \ref{l:1}.
To prove \eqref{eq:psi-2}, let us set $u:=\psi_{v,2r}(s)$.
Now, if $t = 1_M$, then \eqref{eq:psi-2} follows trivially since, keeping in mind \eqref{eq:psi-pointed}:
\[
\psi_{v,2r}(st) = \psi_{v,2r}(s) = \psi_{\psi_{v,2r}(s),r}(1_{M}) = \psi_{\psi_{v,2r}(s),r}(t).
\]
If $t \neq 1_M$, we can find $1 \leq r' \leq r$ and $\sigma_1, \sigma_2, \ldots, \sigma_{r'} \in \Sigma$ such that
$t = \sigma_1\sigma_2 \cdots \sigma_{r'}$. Consider the path
\[
\pi_1 = ((s,\sigma_1,s\sigma_1),(s\sigma_1,\sigma_2, s\sigma_1\sigma_2), \ldots, (s\sigma_1\sigma_2 \cdots \sigma_{r'-1}, \sigma_{r'}, st))
\]
and observe that it is entirely contained in $B_{2r}(1_M)$, since $s,t \in B_r(1_M)$. The morphism $\psi_{v,2r}$
maps $\pi_1$ into the path
\[
\begin{split}
\overline{\pi}_1 & = ((\psi_{v,2r}(s),\sigma_1, \psi_{v,2r}(s\sigma_1),(\psi_{v,2r}(s\sigma_1),\sigma_2, \psi_{v,2r}(s\sigma_1\sigma_2)), \ldots\\
& \ \ \ \ \ \ \ \ \ \ \ \ \ldots (\psi_{v,2r}(s\sigma_1\sigma_2 \cdots \sigma_{r'-1}), \sigma_{r'}, \psi_{v,2r}(st))).
\end{split}
\]
Recalling that $u=\psi_{v,2r}(s) \in B_r(v) \subset V(r)$, consider the inverse image of the
path $\overline{\pi}_1$ under the morphism $\psi_{u,r}$: 
\[
\pi_2 = ((1_{S},\sigma_1,\sigma_1),(\sigma_1,\sigma_2, \sigma_1\sigma_2), \ldots, (\sigma_1\sigma_2 \cdots \sigma_{r'-1}, \sigma_{r'}, t)).
\]
In particular, we have $\psi_{u,r}^{-1}(\psi_{v,2r}(st)) = t$, that is, $\psi_{v,2r}(st) =
\psi_{u,r}(t) = \psi_{\psi_{v,2r}(s),r}(t)$.
\end{proof}

\begin{proof}[Proof of Theorem \ref{t:weiss-characterization}.(2)]
Suppose that the monoid $M$ is left-cancellative and sofic. Let us show that the pair
$(M,\Sigma)$ satisfies the Weiss condition.
Fix $\delta > 0$ and $r \in \N$. 
Set $K := B_{2r+1}(1_M)$ and
\begin{equation}
\label{eq:delta-espilon}
\varepsilon := \left(|B_r(1_M)| \cdot |\Sigma| + |B_r(1_M)|^2\right)^{-1}\delta.
\end{equation}
Since $M$ is sofic, it follows from Lemma \ref{semilemme} 
that we can find a non-empty finite set $V$ and a
$(K,1 - \varepsilon)$-injective 
$(K, \varepsilon)$-morphism
$\varphi \colon M \to \Map(V)$ 
such that
\begin{equation}
\label{eq: identite}
\varphi(1_M) = \Id_V.
\end{equation}
In the sequel, it will be convenient to use the notation $v^f$ instead of $f(v)$ to denote the image of an element $v \in V$ by $f \in \Map(V)$.
\par
Consider the finite $\Sigma$-labeled graph $\GG = (V,E)$ whose edge set 
$E \subset V \times \Sigma \times V$ consists of all triples $(v,\sigma,v^{\varphi(\sigma)})$, 
where $v \in V$ and $\sigma \in \Sigma$.
 Note that $\GG$ may have loops and multiple edges.
 However,   
if $v \in V$ and $\sigma  \in \Sigma$ are fixed, then there exists a unique edge in $\GG$ with initial vertex $v$ and label $\sigma$.
\par
For each $v \in V$, denote by $\psi_v \colon B_{2r}(1_M) \to V$ the map defined by setting
\[
\psi_v(s) = v^{\varphi(s)}
\]
for all $s \in B_{2r}(1_M)$. 
Note that, by virtue of \eqref{eq: identite}, we have 
\begin{equation}
\label{e:psi-identite}
\psi_v(1_M) = v^{\varphi(1_M)} = v^{\Id_V} = v
\end{equation} 
for all $v \in V$.
\par
Denote by $V_0 \subset V$ the set of vertices $v \in V$ satisfying the following conditions:\\

\begin{enumerate}
\item[(*)] 
$\psi_v(s\sigma) = \psi_{\psi_v(s)}(\sigma)$ for all $s \in B_r(1_M)$ and $\sigma \in \Sigma$,\\
\item[(**)]
 $\psi_v(s) \neq \psi_v(t)$ for all distinct $s,t \in B_r(1_M)$.\\
\end{enumerate}

Let us show that $V_0 \subset V(r)$. Suppose that $v \in V_0$ and let $s \in B_r(1_M)$. 
We first show that
\begin{equation}
\label{eq:psi-surjective}
B_v(r) = \psi_v(B_r(1_M)).
\end{equation}
If $s = 1_M$, then we have $\psi_v(s) = \psi_v(1_M) = v \in B_r(v)$. 
If $s \neq 1_M$, then there exist $1 \leq r' \leq r$ and $\sigma_1, \sigma_2,
\ldots, \sigma_{r'} \in \Sigma$ such that $s = \sigma_1 \sigma_2 \cdots \sigma_{r'}$. 
Consider the sequence of edges
\[
\begin{split}
e_1 & := (v,\sigma_1, v^{\varphi(\sigma_1)}) = (v, \sigma_1, \psi_v(\sigma_1)),\\
e_2 & := (\psi_v(\sigma_1),\sigma_2, (\psi_v(\sigma_1))^{\varphi(\sigma_2)})\\
    & = (\psi_v(\sigma_1),\sigma_2, \psi_{\psi_v(\sigma_1)}(\sigma_2))\\
    & = (\psi_v(\sigma_1),\sigma_2, \psi_v(\sigma_1\sigma_2))) \ \ (\mbox{by } (*)),\\
\cdots & \cdots \\
e_{r'} & := (\psi_v(\sigma_1\sigma_2 \cdots \sigma_{r'-1}), \sigma_{r'}, (\psi_v(\sigma_1\sigma_2 \cdots \sigma_{r'-1}))^{\varphi(\sigma_{r'})})\\
& = (\psi_v(\sigma_1\sigma_2 \cdots \sigma_{r'-1}), \sigma_{r'}, \psi_{\psi_v(\sigma_1\sigma_2 \cdots \sigma_{r'-1})}(\sigma_{r'}))\\
& = (\psi_v(\sigma_1\sigma_2 \cdots \sigma_{r'-1}), \sigma_{r'}, \psi_v(\sigma_1\sigma_2 \cdots \sigma_{r'}))  \ \ (\mbox{by } (*)),\\
& = (\psi_v(\sigma_1\sigma_2 \cdots \sigma_{r'-1}), \sigma_{r'}, \psi_v(s)).
\end{split}
\]
The path $\pi = (e_1,e_2, \ldots, e_{r'})$ connects $v$ to $\psi_v(s)$ and has length $r' \leq r$.
This shows that $\psi_v(B_r(1_M)) \subset B_r(v)$. 
Conversely, suppose that  $u \in B_r(v)$. 
If $u = v$, then we have $u = v = \psi_v(1_M) \in \psi_v(B_r(1_M))$. If $u \neq v$ then
there exist $1 \leq r' \leq r$ and a path $((v,\sigma_1,v_1), (v_1,\sigma_2, v_2), \ldots, (v_{r'-1},\sigma_{r'}, u))$. Using both (*) and (**), we deduce that $u = \psi_v(s)$, where $s = \sigma_1\sigma_2 \ldots \sigma_{r'} \in B_r(1_M)$. This shows that $B_r(v) \subset \psi_v(B_r)$. We deduce
\eqref{eq:psi-surjective}.

Consider the map $\psi_{v,r} \colon B_r(1_M) \to B_r(v)$
obtained by restriction of
$\psi_v$ to $B_r(1_M)$. 
It follows from \eqref{eq:psi-surjective} and
condition (**) that $\psi_{v,r}$ is bijective. 
On the other hand, if $s \in B_r(1_M)$ and $\sigma \in \Sigma$, 
we deduce from  (*) that 
\[
(\psi_v(s),\sigma,\psi_v(s\sigma)) = (\psi_v(s),\sigma, \psi_{\psi_v(s)}(\sigma)) = (\psi_v(s),\sigma, (\psi_v(s))^{\varphi(\sigma)}) \in E.
\]
Moreover, by virtue of \eqref{e:psi-identite}, we have $\psi_{v,r}(1_M) = v$.
Consequently, $\psi_{v,r}$ is a $\Sigma$-labeled pointed graph isomorphism. 
This shows that $V_0 \subset V(r)$.
\par
Our next goal is to estimate from below the cardinality of $V_0$.
Let us start by estimating the cardinality of the set consisting of the vertices $v \in V$
satisfying condition (*).
Let $s \in B_r(1_M)$ and $\sigma \in \Sigma$. 
As $\varphi$ is a $(K,\varepsilon)$-morphism, we have
$d_V^{\text{Ham}}(\varphi(s\sigma),\varphi(s)\varphi(\sigma)) \leq \varepsilon$, so that there exists a subset
$V'(s,\sigma) \subset V$ with $|V'(s,\sigma)| \leq \varepsilon |V|$ such that
\[
\psi_v(s\sigma)  = v^{\varphi(s\sigma)} = v^{\varphi(s)\varphi(\sigma)} = (v^{\varphi(s)})^{\varphi(\sigma)} = (\psi_v(s))^{\varphi(\sigma)} = \psi_{\psi_v(s)}(\sigma)
\]
for all $v \in V \setminus V'(s,\sigma)$.
Setting
\[
V':= \bigcup_{\substack{s \in B_r(1_M)\\ \sigma \in \Sigma}} V'(s,\sigma)
\]
we have $|V'| \leq |B_r(1_M)| \cdot |\Sigma| \cdot \varepsilon |V|$ and condition (*) holds for all
$v \in V \setminus V'$.
\par
Let now $s$ and $t$ be distinct elements in $B_r(1_M)$.   
Since $B_r \subset K$ and $\varphi$ is $(K,1 - \varepsilon)$-injective, 
we have that 
$d_V^{\text{Ham}}(\varphi(s), \varphi(t)) \geq 1 - \varepsilon$. This means that we can find a subset
$V''(s,t) \subset V$ of cardinality $|V''(s,t)| \leq \varepsilon |V|$ such that
\begin{equation}
\label{eq:psi-s-t}
\psi_v(s) = v^{\varphi(s)} \neq v^{\varphi(t)} = \psi_v(t)
\end{equation}
for all $v \in V \setminus V''(s,t)$. Setting
\[
V'' :=  \bigcup_{\substack{s,t \in B_r\\s \neq t}} V''(s,t)
\]
we have $|V''| \leq |B_r(1_M)|^2 \cdot \varepsilon |V|$ and it follows from \eqref{eq:psi-s-t} that condition (**) holds for all $v \in V \setminus V''$.

Consequently, conditions (*) and (**) are both satisfied for all $v \in V \setminus (V' \cup V'')$, that is, $V \setminus (V' \cup V'') \subset V_0$.
As
\[
\vert V'  \cup V'' \vert \leq \left(|B_r(1_M)| \cdot |\Sigma| +  |B_r(1_M)|^2\right) \varepsilon |V| = \delta |V|,
\]
where the equality follows from \eqref{eq:delta-espilon},
we finally get
\[
|V(r)| \geq |V_0| \geq (1 - \delta)|V|.
\]
Thus $(M,\Sigma)$ satisfies the Weiss condition.
This proves Theorem \ref{t:weiss-characterization}.(2).
\end{proof}

From  Theorem \ref{t:weiss-characterization} we deduce the following graph theoretic characterization of soficity for finitely generated left-cancellative monoids.
\begin{corollary}
\label{c:weiss-characterization}
Let $M$ be a finitely generated left-cancellative monoid and let $\Sigma \subset M$ be a finite generating subset of $M$. 
Then the following conditions are equivalent:
\begin{enumerate}[{\rm (a)}]
\item 
the monoid $M$ is sofic;
\item 
the pair $(M,\Sigma)$ satisfies the Weiss condition.
\end{enumerate}
As a consequence, for a finitely generated left-cancellative monoid $M$, the fact that the pair
$(M,\Sigma)$ satisfies the Weiss condition or not, is independent of the particular finite generating subset $\Sigma \subset M$.
\end{corollary}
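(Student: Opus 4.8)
The plan is to derive this corollary directly from the two implications established in Theorem~\ref{t:weiss-characterization}, and then to extract the independence statement as a purely formal consequence of the resulting equivalence.

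First I would prove the equivalence of (a) and (b). The implication (b) $\Rightarrow$ (a) is precisely the content of Theorem~\ref{t:weiss-characterization}.(1): if $(M,\Sigma)$ satisfies the Weiss condition, then $M$ is sofic. I would point out that this direction does not even require the left-cancellativity hypothesis. For the converse (a) $\Rightarrow$ (b), I would invoke Theorem~\ref{t:weiss-characterization}.(2): since $M$ is assumed to be left-cancellative, the hypothesis that $M$ is sofic yields that $(M,\Sigma)$ satisfies the Weiss condition. Combining the two implications gives the equivalence of (a) and (b) for every finite generating subset $\Sigma$ of the left-cancellative monoid $M$.

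For the independence assertion, the key observation is that condition (a)---that $M$ is sofic---is a property of the monoid $M$ alone and makes no reference to the chosen generating subset. Suppose $\Sigma_1$ and $\Sigma_2$ are two finite generating subsets of $M$. By the equivalence just proved, $(M,\Sigma_1)$ satisfies the Weiss condition if and only if $M$ is sofic, and likewise $(M,\Sigma_2)$ satisfies the Weiss condition if and only if $M$ is sofic. Hence $(M,\Sigma_1)$ satisfies the Weiss condition if and only if $(M,\Sigma_2)$ does, which is exactly the claimed independence of the Weiss condition from the particular finite generating subset.

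Since all of the substantive work has already been carried out in Theorem~\ref{t:weiss-characterization}, there is no serious obstacle here; the corollary merely repackages the two implications together with the tautological observation that soficity is intrinsic to $M$. The only point worth isolating is the asymmetry in the hypotheses of the two parts of the theorem: part (1) supplies (b) $\Rightarrow$ (a) for \emph{every} finitely generated monoid, whereas part (2)---the direction (a) $\Rightarrow$ (b)---genuinely consumes the left-cancellativity assumption, which is therefore indispensable for the equivalence and for the independence conclusion drawn from it.
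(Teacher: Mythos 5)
Your proposal is correct and matches the paper's intent exactly: the paper states this corollary as an immediate consequence of Theorem~\ref{t:weiss-characterization}, combining part (1) for (b)~$\Rightarrow$~(a) and part (2) for (a)~$\Rightarrow$~(b), with the independence claim following because soficity does not depend on $\Sigma$. Your remark that left-cancellativity is consumed only in the direction (a)~$\Rightarrow$~(b) is also consistent with the structure of the theorem.
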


Here follow some examples that illustrate Theorem \ref{t:weiss-characterization}.

We first observe that when $M$ is a finite group, then for any generating  
subset $\Sigma \subset M$ the corresponding Cayley graph $\CC(M,\Sigma) = (V,E)$
satisfies $V(r) = V$ so that condition \eqref{eq:condition-weiss} 
is verified for all $r \in \N$ and $\delta > 0$.
Indeed, in this case, the Cayley graph is vertex-homogeneous since the action of the group
$M$ on $\CC(M,\Sigma)$ induced by right-multiplication is vertex-transitive.
However, this is no more the case, in general, for finite sofic monoids.
When $M$ is a finite monoid, given $r \in \N$ and a generating subset $\Sigma\subset M$,
we can only guarantee that the corresponding Cayley graph $\CC(M,\Sigma) =(V,E)$ satisfies 
$\vert V(r) \vert \geq 1$ (since $1_M \in V(r)$).
This is the reason why  some ``geometric amplification'' of these Cayley graphs is necessary
in order to fulfill condition \eqref{eq:condition-weiss}, as shown in the following two examples. 

\begin{example}
\label{e:sz}
Let $M = \{1_M,a\}$ be the monoid with two elements, where $a^2=a$. 
This is the simplest monoid which is not a group (it is not cancellative).
Taking $\Sigma = \{a\}$ as a generating subset, we have that the Cayley graph $\CC=\CC(M,\Sigma)$ consists of the two vertices $1_M$ and $a$, and the two edges $(1_M,a,a)$ and $(a,a,a)$ (the latter is a loop), see Figure \ref{fig:1}. 
Let now $X$ be a finite set such that $x \neq a$ for all $x \in X$, and consider the $\Sigma$-labeled graph $\GG =(V,E)$ where $V = X \cup \{a\}$ and $E = \{(v,a,a): v \in V\}$, see Figure \ref{fig:1}.
Let $r \in \N$ and $\delta >0$. We have that $B_r^\CC(1_M)$ is isomorphic, as a pointed $\Sigma$-labeled graph, to $B_r^\GG(x)$ for all $x \in X$. This shows that $V(r) \supset X$. Thus, provided $|X|$ is large ($|X| \geq 1/\delta$ would suffice), we have $|V(r)| \geq |X| = |V|-1 \geq (1-\delta)|V|$ and condition \eqref{eq:condition-weiss} is satisfied.
It follows from Theorem \ref{t:weiss-characterization} that $M$ is sofic.

\begin{figure}
\unitlength=0,3mm

\textwidth = 16.00cm \textheight = 22.00cm \oddsidemargin= 0.12in
\evensidemargin = 0.12in \setlength{\parindent}{8pt}
\setlength{\parskip}{5pt plus 2pt minus 1pt}
\setloopdiam{17}\setprofcurve{28}
\begin{center}
\thicklines
\begin{picture}(220,120)
\letvertex A=(0,70)\letvertex B=(60,70)
\drawvertex(A){$\bullet$}\drawvertex(B){$\bullet$}
\put(-21,65){{\bf $1_K$}}
\put(50,60){$a$}

\put(84,67){\footnotesize $a$}
\put(30,73){\footnotesize $a$}

\drawundirectededge(A,B){}
\drawundirectedloop[r](B){}

\letvertex C=(160,70)
\letvertex D=(220,70)
\put(213,55){$a$}
\put(244,68){\footnotesize $a$}

\drawundirectededge(C,D){}
\letvertex CC=(160,110)
\letvertex CCC=(160,90)
\letvertex CCCC=(160,70)
\letvertex CCCCD=(160,50)
\letvertex CCCCC=(160,30)

\drawvertex(C){$\bullet$}
\drawvertex(CC){$\bullet$}
\drawvertex(CCC){$\bullet$}
\drawvertex(CCCC){$\bullet$}
\drawvertex(CCCCD){$\vdots$}
\drawvertex(CCCCC){$\bullet$}

\put(147,108){$x$}
\put(145,88){$x'$}
\put(141,68){$x''$}
\put(140,28){$x'''$}

\put(187,94){\footnotesize $a$}
\put(187,82){\footnotesize $a$}
\put(187,72){\footnotesize $a$}
\put(187,54){\footnotesize $a$}

\drawundirectededge(CC,D){}
\drawundirectededge(CCC,D){}
\drawundirectededge(CCCC,D){}
\drawundirectededge(CCCCC,D){}

\drawundirectedloop[r](D){}
\drawvertex(D){$\bullet$}

\letvertex Z=(187,91)
\letvertex ZZ=(189,90)
\drawedge(ZZ,Z){}

\letvertex X=(187,81)
\letvertex XX=(190,80)
\drawedge(XX,X){}

\letvertex Y=(187,70)
\letvertex YY=(189,70)
\drawedge(YY,Y){}

\letvertex T=(187,49)
\letvertex TT=(189,50)
\drawedge(TT,T){}

\letvertex S=(240,68)
\letvertex SS=(240,69)
\drawedge(S,SS){}

\letvertex U=(80,68)
\letvertex UU=(80,69)
\drawedge(U,UU){}

\letvertex Y=(37,70)
\letvertex YY=(39,70)
\drawedge(YY,Y){}

\put(0,0){$\CC = \CC(M,\Sigma)$}
\put(160,0){$\GG = (V,E)$}
\end{picture}
\end{center}
\caption{The labeled graphs $\CC$ and $\GG$ in Example \ref{e:sz}}
\label{fig:1}
\end{figure}
\end{example}

\begin{figure}
\unitlength=0,3mm
\textwidth = 16.00cm \textheight = 22.00cm \oddsidemargin= 0.12in
\evensidemargin = 0.12in \setlength{\parindent}{8pt}
\setlength{\parskip}{5pt plus 2pt minus 1pt}
\setloopdiam{17}\setprofcurve{10}
\begin{center}

\thicklines
\begin{picture}(520,220)

\letvertex A=(0,70)\drawvertex(A){$\bullet$}\put(-12,55){\footnotesize $1_M$}
\letvertex AA=(80,70)\drawvertex(AA){$\bullet$}\put(75,58){$a$}
\letvertex AAA=(0,150)\drawvertex(AAA){$\bullet$}\put(-12,165){$c_0$}
\letvertex AAAA=(80,150)\drawvertex(AAAA){$\bullet$}\put(75,138){$c_1$}
\drawundirectedloop[l](AAA){}
%\drawundirectededge(A,AA){}
\drawundirectededge(A,AAA){}
\drawundirectededge(AAA,AAAA){}
\drawundirectededge(AA,AAA){}
\drawundirectedcurvededge(A,AA){}
\drawundirectedcurvededge(AA,A){}
\drawundirectedcurvededge(AAA,AAAA){}
\drawundirectedcurvededge(AAAA,AAA){}
\put(0,0){$\CC = \CC(M,\Sigma)$}

\letvertex AT=(0,110)
\letvertex ATT=(0,115)
\drawedge(ATT,AT){\footnotesize$c_0$}

\letvertex AB=(-20,147)
\letvertex ABB=(-20,148)
\drawedge(AB,ABB){\footnotesize$c_0$}

\letvertex AS=(35,80)
\letvertex ASS=(40,80)
\drawedge(AS,ASS){\footnotesize$a$}

\letvertex AU=(40,60)
\letvertex AUU=(45,60)
\drawedge(AUU,AU){\footnotesize$a$}

\letvertex AW=(41,110)
\letvertex AWW=(40,111)
\drawedge(AWW,AW){\footnotesize$c_0$}

\letvertex AV=(35,160)
\letvertex AVV=(40,160)
\drawedge(AV,AVV){\footnotesize$a$}

\letvertex AZ=(43,140)
\letvertex AZZ=(45,140)
\drawedge(AZZ,AZ){\footnotesize$a$}

\letvertex AX=(40,150)
\letvertex AXX=(39,150)
\drawedge(AXX,AX){\footnotesize$c_0$}
%============

\letvertex C=(130,200)\drawvertex(C){$\bullet$}\put(125,211){\footnotesize $0$}
\letvertex CC=(190,200)\drawvertex(CC){$\bullet$}\put(195,198){\footnotesize $1$}
\drawundirectedloop[l](C){}
\drawundirectedcurvededge(C,CC){}
\drawundirectedcurvededge(CC,C){}
\drawundirectededge(C,CC){}

\put(130,160){$\GG_1 = (V_1,E_1)$}

\letvertex CV=(157,210)
\letvertex CVV=(158,210)
\drawedge(CV,CVV){\footnotesize$a$}

\letvertex CZ=(158,200)
\letvertex CZZ=(157,200)
\drawedge(CZZ,CZ){\footnotesize$c_0$}

\letvertex CX=(160,190)
\letvertex CXX=(161,190)
\drawedge(CXX,CX){\footnotesize$a$}

\letvertex CQ=(110,197)
\letvertex CQQ=(110,198)
\drawedge(CQ,CQQ){\footnotesize$c_0$}

\letvertex CB=(120,88)
\letvertex CBB=(120,87)
\drawedge(CB,CBB){\footnotesize$c_0$}

\letvertex CD=(158,120)
\letvertex CDD=(157,120)
\drawedge(CDD,CD){\footnotesize$c_0$}

\letvertex CW=(158,82)
\letvertex CWW=(157,83)
\drawedge(CWW,CW){\footnotesize$c_0$}

\letvertex DW=(166,90)
\letvertex DWW=(165,91)
\drawedge(DWW,DW){\footnotesize$a$}

\letvertex DDW=(156,70)
\letvertex DDWW=(155,71)
\drawedge(DDW,DDWW){\footnotesize$a$}

%============

\letvertex B=(120,40)\drawvertex(B){$\bullet$}\put(105,25){\footnotesize $(0,1)$}
\letvertex BB=(200,40) \drawvertex(BB){$\bullet$}\put(187,25){\footnotesize $(1,1)$}
\letvertex BBB=(120,120) \drawvertex(BBB){$\bullet$}\put(105,135){\footnotesize $(0,0)$}
\letvertex BBBB=(200,120)  \drawvertex(BBBB){$\bullet$}\put(187,128){\footnotesize $(1,0)$}
\drawundirectedloop[l](BBB){}
\drawundirectededge(B,BBB){}
\drawundirectededge(BB,BBB){}
\drawundirectededge(BBBB,BBB){}
\drawundirectedcurvededge(BBB,BB){}
\drawundirectedcurvededge(BB,BBB){}
\drawundirectedcurvededge(B,BBBB){}
\drawundirectedcurvededge(BBBB,B){}
\put(130,0){$\GG_2 = (V_2,E_2)$}

\letvertex DQ=(100,117)
\letvertex DQQ=(100,118)
\drawedge(DQ,DQQ){\footnotesize$c_0$}

%============

\letvertex DQ=(260,117)
\letvertex DQQ=(260,118)
\drawedge(DQ,DQQ){\footnotesize$c_0$}

\letvertex EQ=(448,143)
\letvertex EQQ=(448,144)
\drawedge(EQ,EQQ){\footnotesize$a$}

\letvertex EEQ=(432,94)
\letvertex EEQQ=(432,93)
\drawedge(EEQ,EEQQ){\footnotesize$a$}

\letvertex FQ=(389,143)
\letvertex FQQ=(389,144)
\drawedge(FQ,FQQ){\footnotesize$a$}

\letvertex FFQ=(372,94)
\letvertex FFQQ=(372,93)
\drawedge(FFQ,FFQQ){\footnotesize$a$}

\letvertex HD=(358,120)
\letvertex HDD=(357,120)
%\drawedge(HDD,HD){\footnotesize$c_0$}

\letvertex HHD=(378,120)
\letvertex HHDD=(377,120)
\drawedge(HHDD,HHD){\footnotesize$c_0$}

\letvertex L=(302,87)
\letvertex LL=(301,90)
\drawedge(LL,L){\footnotesize$c_0$}

\letvertex SL=(297,147)
\letvertex SLL=(296,144)
\drawedge(SLL,SL){\footnotesize$c_0$}

\letvertex TL=(338,86)
\letvertex TLL=(337,87)
\drawedge(TLL,TL){\footnotesize$c_0$}

\letvertex TS=(383,110)
\letvertex TSS=(381,110)
\drawedge(TS,TSS){\footnotesize$a$}

\letvertex TTS=(378,130)
\letvertex TTSS=(377,130)
\drawedge(TTSS,TTS){\footnotesize$a$}

\letvertex GQ=(330,133)
\letvertex GQQ=(330,134)
\drawedge(GQ,GQQ){\footnotesize$a$}

\letvertex GGQ=(311,104)
\letvertex GGQQ=(311,103)
\drawedge(GGQ,GGQQ){\footnotesize$a$}

\letvertex M=(404,74)
\letvertex MM=(401,75)
\drawedge(MM,M){\footnotesize$c_0$}

\letvertex N=(401,165)
\letvertex NN=(404,166)
\drawedge(N,NN){\footnotesize$c_0$}

\letvertex P=(341,156)
\letvertex PP=(344,157)
\drawedge(P,PP){\footnotesize$c_0$}

\letvertex D=(280,120)\drawvertex(D){$\bullet$}\put(240,135){\footnotesize $(0,0,0)$}
\drawundirectedloop[l](D){}
\letvertex DD=(480,120)\drawvertex(DD){$\bullet$}\put(485,115){\footnotesize $(1,1,1)$}
\letvertex E=(320,60)\drawvertex(E){$\bullet$}\put(300,40){\footnotesize $(1,1,0)$}
\letvertex EE=(380,60)\drawvertex(EE){$\bullet$}\put(360,40){\footnotesize $(1,0,1)$}
\letvertex EEE=(440,60)\drawvertex(EEE){$\bullet$}\put(420,40){\footnotesize $(0,1,1)$}
\letvertex F=(320,180)\drawvertex(F){$\bullet$}\put(300,190){\footnotesize $(0,0,1)$}
\letvertex FF=(380,180)\drawvertex(FF){$\bullet$}\put(360,190){\footnotesize $(0,1,0)$}
\letvertex FFF=(440,180)\drawvertex(FFF){$\bullet$}\put(420,190){\footnotesize $(1,0,0)$}
\drawundirectededge(DD,D){}
\drawundirectededge(E,D){}
\drawundirectededge(EE,D){}
\drawundirectededge(EEE,D){}
\drawundirectededge(F,D){}
\drawundirectededge(FF,D){}
\drawundirectededge(FFF,D){}
\drawundirectedcurvededge(D,DD){}
\drawundirectedcurvededge(DD,D){}
\drawundirectedcurvededge(E,F){}
\drawundirectedcurvededge(F,E){}
\drawundirectedcurvededge(EE,FF){}
\drawundirectedcurvededge(FF,EE){}
\drawundirectedcurvededge(EEE,FFF){}
\drawundirectedcurvededge(FFF,EEE){}
\put(320,0){$\GG_3 = (V_3,E_3)$}
\end{picture}
\end{center}
\caption{The labeled graphs $\CC$, $\GG_1$, $\GG_2$ and $\GG_3$ in Example \ref{e:map-2}}
\label{fig:4}
\end{figure} 
\begin{example}
\label{e:map-2}
Let $M = \Map(\{0,1\})$ be the symmetric monoid of the set $X=\{0,1\}$. Then $M = \{1_M,a,c_0,c_1\}$
where $a(x) = \bar{x}:=1-x$ and $c_y(x) = y$ for all $x,y \in X$. In other words, $a$ is the bijective map which exchanges $0$ and $1$ and $c_y$ is the constant map with value $y$. Taking $\Sigma = \{a,c_0\}$ as a generating subset of $M$, we have that the Cayley graph $\CC=\CC(M,\Sigma)$ consists of the four vertices $m \in M$ and the edges $(1_M,a,a)$, $(a,a,1_M)$, $(c_x,a,c_{\bar{x}})$, for $x \in X$, and $(m,c_0,c_0)$, for $m \in M$, see Figure \ref{fig:4}.

Let $r \in \N$ and $\delta >0$. Let $n \in \N$ and consider now the $\Sigma$-labeled graph $\GG_n = (V_n,E_n)$, where $V_n := X^n$ and $E_n = \{(x,\sigma,x^\sigma): x \in X^n, \sigma \in \Sigma\}$, where for $x = (x_1,x_2, \ldots, x_n) \in X$ and $\sigma \in \Sigma$ we set $x^\sigma = (\sigma(x_1), \sigma(x_2), \ldots, \sigma(x_n)) \in X^n$. In other words, $\GG_n$ is the \emph{Schreier graph} for the diagonal action of $M$ on $X^n$ with respect to $\Sigma$.
In Figure \ref{fig:4} we have drawn $\GG_1$, $\GG_2$, and $\GG_3$.
It is clear that $V_n(r) \supset X^n \setminus \Delta_n$, where $\Delta_n:=\{(x,x,\ldots,x): x \in X\}$ is the diagonal of $X^n$. It follows that, provided $n \geq 1-\log_2(\delta)$, one has $|V_n(r)| \geq |X^n| - |\Delta_n| = 2^n-2 \geq (1 - \delta) |V_n|$, so that condition \eqref{eq:condition-weiss} is satisfied by $\GG_n$.
\end{example}

\begin{example}
\label{e:naturals}
Let $M = \N = \{0,1,2,\ldots\}$ be the additive monoid of the naturals. Taking $\Sigma = \{1\}$
as a generating subset, we have that the Cayley graph $\CC=\CC(\N,\Sigma)$ consists of the vertices $0,1,\ldots$ and the edges $(n,1,n+1)$ for all $n \in \N$, see Figure \ref{fig:2}.
Let $r \in \N$ and $\delta >0$. Let $X$ be a finite set disjoint from $\N$ and consider the $\Sigma$-labeled graph $\GG =(V,E)$ where $V = X \cup \{1,2,\ldots,r\}$ and $E = \{(x,1,1): x \in X\} \cup \{(k,1,k+1):  1 \leq k \leq r-1\}$, see Figure \ref{fig:2}. As in Example \ref{e:sz},
we have $V(r) \supset X$ so that, provided $|X|$ is large ($|X| \geq r/\delta$ would suffice)
condition \eqref{eq:condition-weiss} is satisfied by $\GG$.

\begin{figure}
\unitlength=0,3mm
\textwidth = 16.00cm \textheight = 22.00cm \oddsidemargin= 0.12in
\evensidemargin = 0.12in \setlength{\parindent}{8pt}
\setlength{\parskip}{5pt plus 2pt minus 1pt}
\setloopdiam{17}\setprofcurve{28}
\begin{center}

\thicklines
\begin{picture}(500,120)
\letvertex Z=(-40,70)
\letvertex A=(0,70)\letvertex B=(40,70) \letvertex BB=(80,70) \letvertex BBB=(120,70)  
\letvertex BBBB=(160,70)
\put(-2,55){\footnotesize $0$}
\put(38,55){\footnotesize $1$}
\put(78,55){\footnotesize $2$}
\put(118,58){\footnotesize $n$}
\put(148,58){\footnotesize $n\!\!+\!\!1$}

\drawvertex(A){$\bullet$}\drawvertex(B){$\bullet$}\drawvertex(BB){$\bullet$}
\drawvertex(BBB){$\bullet$} \drawvertex(BBBB){$\bullet$}

\put(170,67){$\cdots$}
\put(90,67){$\cdots$}

\drawundirectededge(A,BB){}
\drawundirectededge(BBB,BBBB){}

\letvertex I=(24,70)
\letvertex II=(26,70)
\drawedge(II,I){}

\letvertex L=(64,70)
\letvertex LL=(66,70)
\drawedge(LL,L){}

\letvertex M=(144,70)
\letvertex MM=(146,70)
\drawedge(MM,M){}

% ===========
\letvertex C=(260,70)
\letvertex D=(320,70)
\letvertex E=(360,70)
\letvertex F=(400,70)
\letvertex G=(440,70)
\letvertex H=(480,70)
\put(412,67){$\cdots$}

\drawundirectededge(D,F){}
\drawundirectededge(G,H){}
\drawundirectededge(C,D){}
\letvertex CC=(260,110)
\letvertex CCC=(260,90)
\letvertex CCCC=(260,70)
\letvertex CCCCD=(260,50)
\letvertex CCCCC=(260,30)

\drawvertex(C){$\bullet$}
\drawvertex(CC){$\bullet$}
\drawvertex(CCC){$\bullet$}
\drawvertex(CCCC){$\bullet$}
\drawvertex(CCCCD){$\vdots$}
\drawvertex(CCCCC){$\bullet$}

\drawvertex(D){$\bullet$}
\drawvertex(E){$\bullet$}
\drawvertex(F){$\bullet$}
\drawvertex(G){$\bullet$}
\drawvertex(H){$\bullet$}

\drawundirectededge(D,F){}
\drawundirectededge(G,H){}

\put(247,108){$x$}
\put(245,88){$x'$}
\put(241,68){$x''$}
\put(240,28){$x'''$}

\put(318,55){\footnotesize $1$}
\put(358,55){\footnotesize $2$}
\put(398,55){\footnotesize $3$}
\put(428,58){\footnotesize $r\!\!-\!\!1$}
\put(478,58){\footnotesize $r$}

\put(410,67){$\cdots$}

\drawundirectededge(CC,D){}
\drawundirectededge(CCC,D){}
\drawundirectededge(CCCC,D){}
\drawundirectededge(CCCCC,D){}

\letvertex Z=(287,91)
\letvertex ZZ=(289,90)
\drawedge(ZZ,Z){}

\letvertex X=(287,81)
\letvertex XX=(290,80)
\drawedge(XX,X){}

\letvertex Y=(287,70)
\letvertex YY=(289,70)
\drawedge(YY,Y){}

\letvertex T=(287,49)
\letvertex TT=(289,50)
\drawedge(TT,T){}

\letvertex S=(344,70)
\letvertex SS=(346,70)
\drawedge(SS,S){}

\letvertex U=(384,70)
\letvertex UU=(386,70)
\drawedge(UU,U){}

\letvertex V=(464,70)
\letvertex VV=(466,70)
\drawedge(VV,V){}
\put(60,0){$\CC = \CC(\N,\Sigma)$}
\put(330,0){$\GG = (V,E)$}
\end{picture}
\end{center}
\caption{The labeled graphs $\CC$ and $\GG$ in Example \ref{e:naturals}}
\label{fig:2}
\end{figure}

Alternatively, let $n \in \N$. Consider first the $\Sigma$-labeled graph $\GG_n = (V_n,E_n)$, where $V_n = \{0,1,2,\ldots,n-1\}$ and $E_n = \{(k,1,k+1): k=0,1,\ldots,n-2\} \cup \{(n-1,1,0)\}$, see Figure \ref{fig:3}. Now if $n \geq r+1$, one has $V_n(r) = V_n$ so that condition \eqref{eq:condition-weiss} is satisfied by $\GG_n$ (note that this is, in fact, independent of
$\delta$).

\begin{figure}
\unitlength=0,3mm
\textwidth = 16.00cm \textheight = 22.00cm \oddsidemargin= 0.12in
\evensidemargin = 0.12in \setlength{\parindent}{8pt}
\setlength{\parskip}{5pt plus 2pt minus 1pt}
\setloopdiam{17}\setprofcurve{28}
\begin{center}
\thicklines

\begin{picture}(500,120)
\letvertex A=(0,40) \drawvertex(A){$\bullet$} \put(-2,25){\footnotesize $0$}
\letvertex AA=(40,40) \drawvertex(AA){$\bullet$} \put(38,25){\footnotesize $1$}
\letvertex AAA=(80,40) \drawvertex(AAA){$\bullet$} \put(78,25){\footnotesize $2$}
\letvertex AAAA=(120,40) \drawvertex(AAAA){$\bullet$} \put(108,28){\footnotesize $r\!\!-\!\!1$}

\letvertex B=(0,80) \drawvertex(B){$\bullet$} \put(-10,90){\footnotesize $n\!\!-\!\!1$}
\letvertex BB=(40,80) \drawvertex(BB){$\bullet$} \put(30,90){\footnotesize $n\!\!-\!\!2$}
\letvertex BBB=(80,80) \drawvertex(BBB){$\bullet$} \put(70,90){\footnotesize $r\!\!+\!\!1$}
\letvertex BBBB=(120,80) \drawvertex(BBBB){$\bullet$} \put(120,90){\footnotesize $r$}

\put(200,50){$\overset{V(r)}{\overbrace{\ \hspace{4.5cm} \ }}$}
\drawundirectededge(A,AAA){}
\drawundirectededge(A,B){}
\drawundirectededge(B,BB){}
\drawundirectededge(BBB,BBBB){}
\drawundirectededge(AAAA,BBBB){}

\letvertex K=(18,80)
\letvertex KK=(16,80)
\drawedge(KK,K){}

\letvertex X=(64,40)
\letvertex XX=(66,40)
\drawedge(XX,X){}

\letvertex T=(36,80)
\letvertex TT=(34,80)
\drawedge(TT,T){}

\letvertex S=(96,80)
\letvertex SS=(98,80)
\drawedge(S,SS){}

\letvertex Y=(24,40)
\letvertex YY=(26,40)
\drawedge(YY,Y){}

\letvertex U=(0,56)
\letvertex UU=(0,58)
\drawedge(U,UU){}

\letvertex V=(120,62)
\letvertex VV=(120,64)
\drawedge(VV,V){}
% ===
\put(50,77){$\cdots$}
\put(90,37){$\cdots$}
\put(290,37){$\cdots$}
\put(410,37){$\cdots$}
% ====================

\letvertex C=(200,40)\drawvertex(C){$\bullet$}\put(198,25){\footnotesize $0$}
\letvertex D=(240,40)\drawvertex(D){$\bullet$}\put(238,25){\footnotesize $1$}
\letvertex E=(280,40)\drawvertex(E){$\bullet$}\put(278,25){\footnotesize $2$}
\letvertex F=(320,40)\drawvertex(F){$\bullet$}\put(300,25){\footnotesize $n\!\!-\!\!r\!\!-\!\!2$}
\letvertex G=(360,40)\drawvertex(G){$\bullet$}\put(350,25){\footnotesize $n\!\!-\!\!r\!\!-\!\!1$}
\letvertex H=(400,40)\drawvertex(H){$\bullet$}\put(398,25){\footnotesize $n\!\!-\!\!r$}
\letvertex I=(440,40)\drawvertex(I){$\bullet$}\put(435,25){\footnotesize $n\!\!-\!\!2$}
\letvertex L=(480,40)\drawvertex(L){$\bullet$}\put(475,25){\footnotesize $n\!\!-\!\!1$}

\drawundirectededge(C,E){}
\drawundirectededge(F,H){}
\drawundirectededge(I,L){}

\letvertex M=(224,40)
\letvertex MM=(226,40)
\drawedge(MM,M){}

\letvertex N=(264,40)
\letvertex NN=(266,40)
\drawedge(NN,N){}

\letvertex O=(344,40)
\letvertex OO=(346,40)
\drawedge(OO,O){}

\letvertex P=(384,40)
\letvertex PP=(386,40)
\drawedge(PP,P){}

\letvertex Q=(464,40)
\letvertex QQ=(466,40)
\drawedge(QQ,Q){}

\put(20,0){$\GG_n = (V_n,E_n)$}
\put(260,0){$\GG_n' = (V_n',E_n')$}
\end{picture}
\end{center}
\caption{The labeled graphs $\GG_n$ and $\GG_n'$ in Example \ref{e:naturals}}
\label{fig:3}
\end{figure}

Finally, consider the $\Sigma$-labeled graph $\GG_n' = (V_n',E_n')$, where $V_n' = \{0,1,2,\ldots,n-1\}$ and $E_n' = \{(k,1,k+1): k=0,1,\ldots,n-2\}$, see Figure \ref{fig:3}. 
Now if $n \geq r/\delta$, one has $V_n(r) = \{0,1,\ldots, n-r-1\}$
and $|V_n(r)| = n-r \geq (1 - \delta)n = (1 - \delta)|V_n|$, so that condition \eqref{eq:condition-weiss} is satisfied also by $\GG_n'$. 
\end{example}

\begin{example}
Let $M$ be a cancellative, right-amenable, finitely generated monoid. 
Let $\Sigma \subset M$ be a finite generating subset.
It follows from \cite[Proposition 3.2]{semieden} and/or \cite[Proposition 2.3]{fekete}
(after passing to the opposite monoid) that given any finite subset $K \subset M$ and $\delta > 0$ one can find a finite subset $\Omega = \Omega(K,\delta) \subset M$ such that 
\begin{equation}
\label{e:interior}
|\Int_K^*(\Omega)| \geq (1 - \delta)|\Omega|,
\end{equation} 
where $\Int_K^*(\Omega) = \{s \in M: sK \subset \Omega\}$. 
Fix $r \in \N^+$ and $\delta > 0$. Set $K:= B_r(1_M)$ and denote by $\GG = (V,E)$ the subgraph
induced by $V := \Omega(K,\delta)$ in $\CC$. Then $sK = B_r(s)$ so that $V(r) \supset \Int_K^*(\Omega)$. From \eqref{e:interior} we deduce that the finite $\Sigma$-labeled graph $\GG$ satisfies \eqref{eq:condition-weiss}.
\end{example}

\begin{example}
Recall that the bicyclic monoid $B$ is not sofic by Theorem~\ref{t:bicyclic-not-sofic} (and not left-cancellative either).
We show that for $\Sigma = \{p,q\}$ the pair $(B,\Sigma)$ does not satisfy the Weiss condition 
by looking at the ball $B_r(1_B)$ (see Figure~\ref{fig:BallB}) in the Cayley graph of $B$ (see Figure~\ref{fig:CayB}). 
\begin{figure}
\unitlength=0,3mm

\textwidth = 16.00cm \textheight = 22.00cm \oddsidemargin= 0.12in
\evensidemargin = 0.12in \setlength{\parindent}{8pt}
\setlength{\parskip}{5pt plus 2pt minus 1pt}
\setloopdiam{17}\setprofcurve{18}

\begin{center}
%\thicklines
\thinlines
\begin{picture}(420,310)
\letvertex A=(10,10)\letvertex AA=(60,10) \letvertex AAA=(110,10)
\letvertex AAAA=(160,10)\letvertex AAAAA=(260,10) \letvertex AAAAAA=(310,10)
\drawvertex(A){$\bullet$}\drawvertex(AA){$\bullet$}
\drawvertex(AAA){$\bullet$}\drawvertex(AAAA){$\bullet$}
\drawvertex(AAAAA){$\bullet$}\drawvertex(AAAAAA){$\bullet$}

\letvertex AZ=(200,10)\letvertex AZZ=(210,10) \letvertex AZZZ=(220,10)
\drawvertex(AZ){$\bullet$}\drawvertex(AZZ){$\bullet$}\drawvertex(AZZZ){$\bullet$}

\letvertex ABZ=(350,10)\letvertex ABZZ=(360,10) \letvertex ABZZZ=(370,10)
\drawvertex(ABZ){$\bullet$}\drawvertex(ABZZ){$\bullet$}\drawvertex(ABZZZ){$\bullet$}

\letvertex BZ=(200,60)\letvertex BZZ=(210,60) \letvertex BZZZ=(220,60)
\drawvertex(BZ){$\bullet$}\drawvertex(BZZ){$\bullet$}\drawvertex(BZZZ){$\bullet$}
\letvertex BBZ=(350,60)\letvertex BBZZ=(360,60) \letvertex BBZZZ=(370,60)
\drawvertex(BBZ){$\bullet$}\drawvertex(BBZZ){$\bullet$}\drawvertex(BBZZZ){$\bullet$}

\letvertex CZ=(200,110)\letvertex CZZ=(210,110) \letvertex CZZZ=(220,110)
\drawvertex(CZ){$\bullet$}\drawvertex(CZZ){$\bullet$}\drawvertex(CZZZ){$\bullet$}
\letvertex ACZ=(350,110)\letvertex ACZZ=(360,110) \letvertex ACZZZ=(370,110)
\drawvertex(ACZ){$\bullet$}\drawvertex(ACZZ){$\bullet$}\drawvertex(ACZZZ){$\bullet$}

\letvertex DZ=(200,210)\letvertex DZZ=(210,210) \letvertex DZZZ=(220,210)
\drawvertex(DZ){$\bullet$}\drawvertex(DZZ){$\bullet$}\drawvertex(DZZZ){$\bullet$}
\letvertex ADZ=(350,210)\letvertex ADZZ=(360,210) \letvertex ADZZZ=(370,210)
\drawvertex(ADZ){$\bullet$}\drawvertex(ADZZ){$\bullet$}\drawvertex(ADZZZ){$\bullet$}

\letvertex Q=(10,150) \letvertex QQ=(10,160) \letvertex QQQ=(10,170) 
\drawvertex(Q){$\bullet$} \drawvertex(QQ){$\bullet$} \drawvertex(QQQ){$\bullet$}

\letvertex B=(10,60)\letvertex BB=(60,60) \letvertex BBB=(110,60)
\letvertex BBBB=(160,60)\letvertex BBBBB=(260,60) \letvertex BBBBBB=(310,60)
\drawvertex(B){$\bullet$}\drawvertex(BB){$\bullet$}
\drawvertex(BBB){$\bullet$}\drawvertex(BBBB){$\bullet$}
\drawvertex(BBBBB){$\bullet$}\drawvertex(BBBBBB){$\bullet$}

\letvertex C=(10,110)\letvertex CC=(60,110) \letvertex CCC=(110,110)
\letvertex CCCC=(160,110)\letvertex CCCCC=(260,110) \letvertex CCCCCC=(310,110)
\drawvertex(C){$\bullet$}\drawvertex(CC){$\bullet$}
\drawvertex(CCC){$\bullet$}\drawvertex(CCCC){$\bullet$}
\drawvertex(CCCCC){$\bullet$}\drawvertex(CCCCCC){$\bullet$}

\letvertex D=(10,210)\letvertex DD=(60,210) \letvertex DDD=(110,210)
\letvertex DDDD=(160,210)\letvertex DDDDD=(260,210) \letvertex DDDDDD=(310,210)
\drawvertex(D){$\bullet$}\drawvertex(DD){$\bullet$}
\drawvertex(DDD){$\bullet$}\drawvertex(DDDD){$\bullet$}
\drawvertex(DDDDD){$\bullet$}\drawvertex(DDDDDD){$\bullet$}

\letvertex CAC=(10,250)
\letvertex CACC=(10,260)
\letvertex CACCC=(10,270)

\drawvertex(CAC){$\bullet$}
\drawvertex(CACC){$\bullet$}
\drawvertex(CACCC){$\bullet$}

%%%%%%%%%%
\letvertex S=(10,37)\letvertex SA=(10,39)\drawedge(SA,S){}
\letvertex SS=(10,87)\letvertex SSA=(10,89)\drawedge(SSA,SS){}
%\letvertex SX=(10,237)\letvertex SXA=(10,239)\drawedge(SXA,SX){}
%\letvertex SV=(10,287)\letvertex SVA=(10,289)\drawedge(SVA,SV){}
%\letvertex SC=(10,337)\letvertex SCA=(10,339)\drawedge(SCA,SC){}

\put(0,35){\footnotesize $q$} \put(0,85){\footnotesize $q$}
%\put(0,235){\footnotesize $q$} \put(0,285){\footnotesize $q$}\put(0,335){\footnotesize $q$}
\put(-5,55){$q$} 
\put(-5,105){$q^2$} \put(-9,205){$q^{m}$}
%%%%%%%%%%%%

\put(-10,-5){$1_B$}
\put(60,-5){$p$} \put(110,-5){$p^2$} \put(160,-5){$p^3$} \put(260,-5){$p^n$} \put(310,-5){$p^{n+1}$}
%\put(-5,45){$q$} \put(-5,95){$q^2$} \put(-5,195){$q^m$}
\put(55,45){$qp$} \put(105,45){$qp^2$} \put(155,45){$qp^3$} \put(255,45){$qp^n$} \put(305,45){$qp^{n+1}$}
\put(55,95){$q^2p$} \put(105,95){$q^2p^2$} \put(155,95){$q^2p^3$} \put(255,95){$q^2p^n$} \put(305,95){$q^2p^{n+1}$}
\put(55,195){$q^mp$} \put(105,195){$q^mp^2$} \put(155,195){$q^mp^3$} \put(255,195){$q^mp^n$} \put(305,195){$q^mp^{n+1}$}

\letvertex P=(32,28) \letvertex PP=(33,28) \drawedge(P,PP){}
\letvertex AP=(82,28) \letvertex APP=(83,28) \drawedge(AP,APP){}
\letvertex BP=(132,28) \letvertex BPP=(133,28) \drawedge(BP,BPP){}
\letvertex CP=(282,28) \letvertex CPP=(283,28) \drawedge(CP,CPP){}

\letvertex PA=(32,78) \letvertex PPA=(33,78) \drawedge(PA,PPA){}
\letvertex APA=(82,78) \letvertex APPA=(83,78) \drawedge(APA,APPA){}
\letvertex BPA=(132,78) \letvertex BPPA=(133,78) \drawedge(BPA,BPPA){}
\letvertex CPA=(282,78) \letvertex CPPA=(283,78) \drawedge(CPA,CPPA){}

\letvertex PB=(32,128) \letvertex PPB=(33,128) \drawedge(PB,PPB){}
\letvertex APB=(82,128) \letvertex APPB=(83,128) \drawedge(APB,APPB){}
\letvertex BPB=(132,128) \letvertex BPPB=(133,128) \drawedge(BPB,BPPB){}
\letvertex CPB=(282,128) \letvertex CPPB=(283,128) \drawedge(CPB,CPPB){}

\letvertex PC=(32,228) \letvertex PPC=(33,228) \drawedge(PC,PPC){}
\letvertex APC=(82,228) \letvertex APPC=(83,228) \drawedge(APC,APPC){}
\letvertex BPC=(132,228) \letvertex BPPC=(133,228) \drawedge(BPC,BPPC){}
\letvertex CPC=(282,228) \letvertex CPPC=(283,228) \drawedge(CPC,CPPC){}

\put(280,85){\footnotesize $q$}\put(280,67){\footnotesize $p$}

\put(30,35){\footnotesize $q$}\put(80,35){\footnotesize $q$}\put(130,35){\footnotesize $q$}
\put(280,35){\footnotesize $q$}

\put(30,17){\footnotesize $p$}\put(80,17){\footnotesize $p$}\put(130,17){\footnotesize $p$}
\put(280,17){\footnotesize $p$}

\put(30,85){\footnotesize $q$}\put(80,85){\footnotesize $q$}\put(130,85){\footnotesize $q$}
\put(280,35){\footnotesize $q$}

\put(30,67){\footnotesize $p$}\put(80,67){\footnotesize $p$}\put(130,67){\footnotesize $p$}
\put(280,17){\footnotesize $p$}

\put(30,135){\footnotesize $q$}\put(80,135){\footnotesize $q$}\put(130,135){\footnotesize $q$}
\put(280,135){\footnotesize $q$}

\put(30,117){\footnotesize $p$}\put(80,117){\footnotesize $p$}\put(130,117){\footnotesize $p$}
\put(280,117){\footnotesize $p$}

\put(30,235){\footnotesize $q$}\put(80,235){\footnotesize $q$}\put(130,235){\footnotesize $q$}
\put(280,235){\footnotesize $q$}

\put(30,217){\footnotesize $p$}\put(80,217){\footnotesize $p$}\put(130,217){\footnotesize $p$}
\put(280,217){\footnotesize $p$}

\drawundirectededge(A,AAAA){}
\drawundirectededge(B,BBBB){}
\drawundirectededge(C,CCCC){}
\drawundirectededge(D,DDDD){}
\drawundirectededge(A,C){}

\drawundirectededge(AAAAA,AAAAAA){}
\drawundirectededge(BBBBB,BBBBBB){}
\drawundirectededge(CCCCC,CCCCCC){}
\drawundirectededge(DDDDD,DDDDDD){}

\letvertex AB=(40,10)\letvertex AAB=(90,10) \letvertex AAAB=(140,10) \letvertex AAAAAB=(295,10)
\drawedge(A,AB){} \drawedge(AA,AAB){}\drawedge(AAA,AAAB){} \drawedge(AAAAA,AAAAAB){}

\letvertex ZAB=(40,60)\letvertex ZAAB=(90,60) \letvertex ZAAAB=(140,60) \letvertex ZAAAAAB=(295,60)
\drawedge(B,ZAB){} \drawedge(BB,ZAAB){}\drawedge(BBB,ZAAAB){} \drawedge(BBBBB,ZAAAAAB){}

\letvertex CAB=(40,110)\letvertex CAAB=(90,110) \letvertex CAAAB=(140,110) 
\letvertex CAAAAAB=(295,110)
\drawedge(C,CAB){} \drawedge(CC,CAAB){}\drawedge(CCC,CAAAB){} \drawedge(CCCCC,CAAAAAB){}

\letvertex DAB=(40,210)\letvertex DAAB=(90,210) \letvertex DAAAB=(140,210) 
\letvertex DAAAAAB=(295,210)
\drawedge(D,DAB){} \drawedge(DD,DAAB){}\drawedge(DDD,DAAAB){} \drawedge(DDDDD,DAAAAAB){}

\letvertex L=(400,10)
\letvertex LL=(400,60)
\letvertex LLL=(400,110)
\letvertex LLLL=(400,210)

\drawundirectedcurvededge(A,AA){}
\drawundirectedcurvededge(AA,AAA){}
\drawundirectedcurvededge(AAA,AAAA){}
\drawundirectedcurvededge(AAAAA,AAAAAA){}
\put(30,17){\footnotesize $p$}\put(80,17){\footnotesize $p$}\put(130,17){\footnotesize $p$}
\put(280,17){\footnotesize $p$}

\drawundirectedcurvededge(B,BB){}
\drawundirectedcurvededge(BB,BBB){}
\drawundirectedcurvededge(BBB,BBBB){}
\drawundirectedcurvededge(BBBBB,BBBBBB){}

\drawundirectedcurvededge(C,CC){}
\drawundirectedcurvededge(CC,CCC){}
\drawundirectedcurvededge(CCC,CCCC){}
\drawundirectedcurvededge(CCCCC,CCCCCC){}

\drawundirectedcurvededge(D,DD){}
\drawundirectedcurvededge(DD,DDD){}
\drawundirectedcurvededge(DDD,DDDD){}
\drawundirectedcurvededge(DDDDD,DDDDDD){}

\thinlines
\letvertex F=(10,300)
\drawedge(A,F){} \thinlines
\drawedge(A,L){} \drawedge(B,LL){} 
\thinlines
\drawedge(C,LLL){}
\thinlines
\drawedge(D,LLLL){}
\end{picture}
\end{center}
\caption{The Cayley graph $\CC(B,\{p,q\})$ of the bicyclic monoid $B$}
\label{fig:CayB}
\end{figure}
Indeed, if $\GG=(V,E)$ is a finite $\Sigma$-labelled graph and $r \geq 2$,
we observe that the edge labelled $p$ starting at a vertex $v \in V(r)$
arrives at a vertex $v' \notin V(r)$. 
\begin{figure}
\unitlength=0,3mm

\textwidth = 16.00cm \textheight = 22.00cm \oddsidemargin= 0.12in
\evensidemargin = 0.12in \setlength{\parindent}{8pt}
\setlength{\parskip}{5pt plus 2pt minus 1pt}
\setloopdiam{17}\setprofcurve{18}

% FIG 2 (CAYLEY OF B)
\begin{center}
%\thicklines
\thinlines
\begin{picture}(420,360)
\letvertex A=(10,10)\letvertex AA=(60,10) \letvertex AAA=(110,10)
\letvertex AAAA=(160,10)\letvertex AAAAA=(260,10) \letvertex AAAAAA=(310,10)
\letvertex AAAAVA=(360,10) \letvertex AAAAAVA=(410,10)
\drawvertex(A){$\bullet$}\drawvertex(AA){$\bullet$}
\drawvertex(AAA){$\bullet$}\drawvertex(AAAA){$\bullet$}
\drawvertex(AAAAA){$\bullet$}\drawvertex(AAAAAA){$\bullet$}
\drawvertex(AAAAVA){$\bullet$}\drawvertex(AAAAAVA){$\bullet$}
\drawundirectedcurvededge(AAAAVA,AAAAAVA){}
\drawundirectedcurvededge(AAAAAA,AAAAVA){}
\letvertex AZ=(200,10)\letvertex AZZ=(210,10) \letvertex AZZZ=(220,10)
\drawvertex(AZ){$\bullet$}\drawvertex(AZZ){$\bullet$}\drawvertex(AZZZ){$\bullet$}

%\letvertex ABZ=(350,10)\letvertex ABZZ=(360,10) \letvertex ABZZZ=(370,10)
%\drawvertex(ABZ){$\bullet$}\drawvertex(ABZZ){$\bullet$}\drawvertex(ABZZZ){$\bullet$}

\letvertex BZ=(200,60)\letvertex BZZ=(210,60) \letvertex BZZZ=(220,60)
\drawvertex(BZ){$\bullet$}\drawvertex(BZZ){$\bullet$}\drawvertex(BZZZ){$\bullet$}
\letvertex BBZ=(350,60)\letvertex BBZZ=(360,60) \letvertex BBZZZ=(370,60)
%\drawvertex(BBZ){$\bullet$}\drawvertex(BBZZ){$\bullet$}\drawvertex(BBZZZ){$\bullet$}

\letvertex CZ=(200,110)\letvertex CZZ=(210,110) \letvertex CZZZ=(220,110)
\drawvertex(CZ){$\bullet$}\drawvertex(CZZ){$\bullet$}\drawvertex(CZZZ){$\bullet$}
\letvertex ACZ=(350,110)\letvertex ACZZ=(360,110) \letvertex ACZZZ=(370,110)
%\drawvertex(ACZ){$\bullet$}\drawvertex(ACZZ){$\bullet$}\drawvertex(ACZZZ){$\bullet$}

\letvertex DZ=(200,210)\letvertex DZZ=(210,210) \letvertex DZZZ=(220,210)
%\drawvertex(DZ){$\bullet$}\drawvertex(DZZ){$\bullet$}\drawvertex(DZZZ){$\bullet$}
\letvertex ADZ=(350,210)\letvertex ADZZ=(360,210) \letvertex ADZZZ=(370,210)
%\drawvertex(ADZ){$\bullet$}\drawvertex(ADZZ){$\bullet$}\drawvertex(ADZZZ){$\bullet$}

\letvertex Q=(10,150) \letvertex QQ=(10,160) \letvertex QQQ=(10,170) 
\drawvertex(Q){$\bullet$} \drawvertex(QQ){$\bullet$} \drawvertex(QQQ){$\bullet$}

\letvertex B=(10,60)\letvertex BB=(60,60) \letvertex BBB=(110,60)
\letvertex BBBB=(160,60)\letvertex BBBBB=(260,60) \letvertex BBBBBB=(310,60)
\drawvertex(B){$\bullet$}\drawvertex(BB){$\bullet$}
\drawvertex(BBB){$\bullet$}\drawvertex(BBBB){$\bullet$}
\drawvertex(BBBBB){$\bullet$}\drawvertex(BBBBBB){$\bullet$}

\letvertex C=(10,110)\letvertex CC=(60,110) \letvertex CCC=(110,110)
\letvertex CCCC=(160,110)\letvertex CCCCC=(260,110) \letvertex CCCCCC=(310,110)
\drawvertex(C){$\bullet$}\drawvertex(CC){$\bullet$}
\drawvertex(CCC){$\bullet$}\drawvertex(CCCC){$\bullet$}
\drawvertex(CCCCC){$\bullet$}\drawvertex(CCCCCC){$\bullet$}

\letvertex D=(10,210)\letvertex DD=(60,210) \letvertex DDD=(110,210)
\letvertex DDDD=(160,210)\letvertex DDDDD=(260,210) \letvertex DDDDDD=(310,210)
\drawvertex(D){$\bullet$}\drawvertex(DD){$\bullet$}
\drawvertex(DDD){$\bullet$}\drawvertex(DDDD){$\bullet$}
%\drawvertex(DDDDD){$\bullet$}\drawvertex(DDDDDD){$\bullet$}

\letvertex DA=(10,260)\letvertex DDA=(60,260) \letvertex DDDA=(110,260)
\drawvertex(DA){$\bullet$}\drawvertex(DDA){$\bullet$}\drawvertex(DDDA){$\bullet$}

\letvertex DAA=(10,310)\letvertex DDAA=(60,310) 
\drawvertex(DAA){$\bullet$}\drawvertex(DDAA){$\bullet$}

\letvertex DADA=(10,360)\drawvertex(DADA){$\bullet$}

\letvertex CAC=(10,250)
\letvertex CACC=(10,260)
\letvertex CACCC=(10,270)

\letvertex ZDAA=(10,310)\letvertex ZDADA=(60,310) 
\drawundirectededge(ZDAA,ZDADA){}

\letvertex ZZDAA=(10,260)\letvertex ZZDADA=(110,260)\letvertex ZZDDAA=(60,260) 
\drawundirectededge(ZZDAA,ZZDADA){}
\drawundirectedcurvededge(ZDAA,ZDADA){}
\drawundirectedcurvededge(ZZDAA,ZZDDAA){}
\drawundirectedcurvededge(ZZDDAA,ZZDADA){}
%\drawundirectedcurvededge(

\letvertex DZAA=(360,60)\letvertex DZDAA=(310,60) 
\drawvertex(DZAA){$\bullet$}\drawvertex(DZDAA){$\bullet$}
\drawundirectededge(DZDAA,DZAA){}
\drawundirectedcurvededge(DZDAA,DZAA){}
\put(330,35){\footnotesize $q$}\put(330,85){\footnotesize $q$}\put(280,85){\footnotesize $q$}
\put(380,35){\footnotesize $q$}
\put(330,17){\footnotesize $p$}\put(330,67){\footnotesize $p$}\put(280,67){\footnotesize $p$}
\put(380,17){\footnotesize $p$}

\letvertex DZADA=(10,60)\letvertex DZDAA=(310,60) 
\drawundirectededge(DZADA,DZDAA){}
\letvertex ZDZADA=(10,110)\letvertex ZDZDAA=(260,110) 
\drawundirectededge(ZDZADA,ZDZDAA){}

\letvertex ZP=(332,28) \letvertex ZPP=(333,28) \drawedge(ZP,ZPP){}
\letvertex ZAP=(332,78) \letvertex ZAPP=(333,78) \drawedge(ZAP,ZAPP){}
\letvertex ZZP=(382,28) \letvertex ZZPP=(383,28) \drawedge(ZZP,ZZPP){}
%\letvertex AP=(82,28) \letvertex APP=(83,28) \drawedge(AP,APP){}
%\letvertex BP=(132,28) \letvertex BPP=(133,28) \drawedge(BP,BPP){}
%\letvertex CP=(282,28) \letvertex CPP=(283,28) \drawedge(CP,CPP){}

\letvertex ZBP=(32,328) \letvertex ZBPP=(33,328) \drawedge(ZBP,ZBPP){}
\letvertex ZCP=(32,278) \letvertex ZCPP=(33,278) \drawedge(ZCP,ZCPP){}
\letvertex ZDP=(82,278) \letvertex ZDPP=(83,278) \drawedge(ZDP,ZDPP){}
\put(30,335){\footnotesize $q$} \put(30,317){\footnotesize $p$}
\put(30,285){\footnotesize $q$} \put(30,267){\footnotesize $p$}
\put(80,285){\footnotesize $q$} \put(80,267){\footnotesize $p$}

\letvertex ZBPZ=(387,10) \letvertex ZBPPZ=(389,10) \drawedge(ZBPPZ,ZBPZ){}
\letvertex ZBPZZ=(337,10) \letvertex ZBPPZZ=(339,10) \drawedge(ZBPPZZ,ZBPZZ){}
\letvertex ZBPZZZ=(337,60) \letvertex ZBPPZZZ=(339,60) \drawedge(ZBPPZZZ,ZBPZZZ){}

\letvertex ZBPZS=(37,310) \letvertex ZBPPZS=(39,310) \drawedge(ZBPPZS,ZBPZS){}
\letvertex ZBPZT=(37,260) \letvertex ZBPPZT=(39,260) \drawedge(ZBPPZT,ZBPZT){}
\letvertex ZBPZU=(87,260) \letvertex ZBPPZU=(89,260) \drawedge(ZBPPZU,ZBPZU){}

%%%%%%%%%%
\letvertex S=(10,37)\letvertex SA=(10,39)\drawedge(SA,S){}
\letvertex SS=(10,87)\letvertex SSA=(10,89)\drawedge(SSA,SS){}
\letvertex SX=(10,237)\letvertex SXA=(10,239)\drawedge(SXA,SX){}
\letvertex SV=(10,287)\letvertex SVA=(10,289)\drawedge(SVA,SV){}
\letvertex SC=(10,337)\letvertex SCA=(10,339)\drawedge(SCA,SC){}

\put(0,35){\footnotesize $q$} \put(0,85){\footnotesize $q$}
\put(0,235){\footnotesize $q$} \put(0,285){\footnotesize $q$}\put(0,335){\footnotesize $q$}
\put(-5,55){$q$} 
\put(-5,105){$q^2$} \put(-18,205){$q^{r-3}$}
\put(-18,255){$q^{r-2}$}\put(-17,305){$q^{r-1}$}\put(-5,355){$q^{r}$}
%%%%%%%%%%%%

\put(-10,-5){$1_B$}
\put(60,-5){$p$} \put(110,-5){$p^2$} \put(160,-5){$p^3$} \put(260,-5){$p^{r-3}$} \put(310,-5){$p^{r-2}$}
 \put(360,-5){$p^{r-1}$} \put(410,-5){$p^{r}$}

\put(47,245){$q^{r-2}p$}\put(97,245){$q^{r-2}p^2$}
\put(47,295){$q^{r-1}p$}

\put(55,45){$qp$} \put(105,45){$qp^2$} \put(155,45){$qp^3$} \put(255,45){$qp^{r-3}$} \put(305,45){$qp^{r-2}$}
\put(355,45){$qp^{r-1}$}

\put(55,95){$q^2p$} \put(105,95){$q^2p^2$} \put(155,95){$q^2p^3$} \put(255,95){$q^2p^{r-3}$} \put(305,95){$q^2p^{r-2}$}
\put(55,195){$q^{r-3}p$} \put(105,195){$q^{r-3}p^2$} \put(155,195){$q^{r-3}p^3$}

\letvertex P=(32,28) \letvertex PP=(33,28) \drawedge(P,PP){}
\letvertex AP=(82,28) \letvertex APP=(83,28) \drawedge(AP,APP){}
\letvertex BP=(132,28) \letvertex BPP=(133,28) \drawedge(BP,BPP){}
\letvertex CP=(282,28) \letvertex CPP=(283,28) \drawedge(CP,CPP){}

\letvertex PA=(32,78) \letvertex PPA=(33,78) \drawedge(PA,PPA){}
\letvertex APA=(82,78) \letvertex APPA=(83,78) \drawedge(APA,APPA){}
\letvertex BPA=(132,78) \letvertex BPPA=(133,78) \drawedge(BPA,BPPA){}
\letvertex CPA=(282,78) \letvertex CPPA=(283,78) \drawedge(CPA,CPPA){}

\letvertex PB=(32,128) \letvertex PPB=(33,128) \drawedge(PB,PPB){}
\letvertex APB=(82,128) \letvertex APPB=(83,128) \drawedge(APB,APPB){}
\letvertex BPB=(132,128) \letvertex BPPB=(133,128) \drawedge(BPB,BPPB){}
\letvertex CPB=(282,128) \letvertex CPPB=(283,128) \drawedge(CPB,CPPB){}

\letvertex PC=(32,228) \letvertex PPC=(33,228) \drawedge(PC,PPC){}
\letvertex APC=(82,228) \letvertex APPC=(83,228) \drawedge(APC,APPC){}
\letvertex BPC=(132,228) \letvertex BPPC=(133,228) \drawedge(BPC,BPPC){}
\letvertex CPC=(282,228) \letvertex CPPC=(283,228) %\drawedge(CPC,CPPC){}

\put(30,35){\footnotesize $q$}\put(80,35){\footnotesize $q$}\put(130,35){\footnotesize $q$}
\put(280,35){\footnotesize $q$}

\put(30,17){\footnotesize $p$}\put(80,17){\footnotesize $p$}\put(130,17){\footnotesize $p$}
\put(280,17){\footnotesize $p$}

\put(30,85){\footnotesize $q$}\put(80,85){\footnotesize $q$}\put(130,85){\footnotesize $q$}
\put(280,35){\footnotesize $q$}

\put(30,67){\footnotesize $p$}\put(80,67){\footnotesize $p$}\put(130,67){\footnotesize $p$}
\put(280,17){\footnotesize $p$}

\put(30,135){\footnotesize $q$}\put(80,135){\footnotesize $q$}\put(130,135){\footnotesize $q$}
\put(280,135){\footnotesize $q$}

\put(30,117){\footnotesize $p$}\put(80,117){\footnotesize $p$}\put(130,117){\footnotesize $p$}
\put(280,117){\footnotesize $p$}

\put(30,235){\footnotesize $q$}\put(80,235){\footnotesize $q$}\put(130,235){\footnotesize $q$}
%\put(280,235){\footnotesize $q$}

\put(30,217){\footnotesize $p$}\put(80,217){\footnotesize $p$}\put(130,217){\footnotesize $p$}
%\put(280,217){\footnotesize $p$}

\drawundirectededge(A,AAAA){}
\drawundirectededge(B,BBBB){}
\drawundirectededge(C,CCCC){}
\drawundirectededge(D,DDDD){}
\drawundirectededge(A,C){}

\drawundirectededge(AAAAA,AAAAAA){}
\drawundirectededge(BBBBB,BBBBBB){}
\drawundirectededge(CCCCC,CCCCCC){}
%\drawundirectededge(DDDDD,DDDDDD){}

\letvertex AB=(40,10)\letvertex AAB=(90,10) \letvertex AAAB=(140,10) \letvertex AAAAAB=(295,10)
\drawedge(A,AB){} \drawedge(AA,AAB){}\drawedge(AAA,AAAB){} \drawedge(AAAAA,AAAAAB){}

\letvertex ZAB=(40,60)\letvertex ZAAB=(90,60) \letvertex ZAAAB=(140,60) \letvertex ZAAAAAB=(295,60)
\drawedge(B,ZAB){} \drawedge(BB,ZAAB){}\drawedge(BBB,ZAAAB){} \drawedge(BBBBB,ZAAAAAB){}

\letvertex CAB=(40,110)\letvertex CAAB=(90,110) \letvertex CAAAB=(140,110) 
\letvertex CAAAAAB=(295,110)
\drawedge(C,CAB){} \drawedge(CC,CAAB){}\drawedge(CCC,CAAAB){} \drawedge(CCCCC,CAAAAAB){}

\letvertex DAB=(40,210)\letvertex DAAB=(90,210) \letvertex DAAAB=(140,210) 
\letvertex DAAAAAB=(295,210)
\drawedge(D,DAB){} \drawedge(DD,DAAB){}\drawedge(DDD,DAAAB){} %\drawedge(DDDDD,DAAAAAB){}

\letvertex L=(400,10)
\letvertex LL=(400,60)
\letvertex LLL=(400,110)
\letvertex LLLL=(400,210)

\drawundirectedcurvededge(A,AA){}
\drawundirectedcurvededge(AA,AAA){}
\drawundirectedcurvededge(AAA,AAAA){}
\drawundirectedcurvededge(AAAAA,AAAAAA){}
\put(30,17){\footnotesize $p$}\put(80,17){\footnotesize $p$}\put(130,17){\footnotesize $p$}
\put(280,17){\footnotesize $p$}

\drawundirectedcurvededge(B,BB){}
\drawundirectedcurvededge(BB,BBB){}
\drawundirectedcurvededge(BBB,BBBB){}
\drawundirectedcurvededge(BBBBB,BBBBBB){}

\drawundirectedcurvededge(C,CC){}
\drawundirectedcurvededge(CC,CCC){}
\drawundirectedcurvededge(CCC,CCCC){}
\drawundirectedcurvededge(CCCCC,CCCCCC){}

\drawundirectedcurvededge(D,DD){}
\drawundirectedcurvededge(DD,DDD){}
\drawundirectedcurvededge(DDD,DDDD){}
%\drawundirectedcurvededge(DDDDD,DDDDDD){}

\thinlines
\letvertex F=(10,300)
\drawundirectededge(A,DADA){} \thinlines
%\drawedge(A,L){} 
%\drawedge(B,LL){} 
\thinlines
%\drawedge(C,LLL){}
\drawundirectededge(A,AAAAAVA){}
\thinlines
% \drawedge(D,LLLL){}
\end{picture}
\end{center}
\caption{The ball $B_r(1_B)$ in the Cayley graph $\CC(B,\{p,q\})$ of the bicyclic monoid $B$}
\label{fig:BallB}
\end{figure}
As the map $v \mapsto v'$ is clearly injective, we deduce that we must have  $|V(r)| \leq |V|/2$.
\end{example}

% SECTION 7
\section{Final remarks}
The same way we have the notions of left and right amenability, which coincide in the
group setting but are distinct in the more general setting of monoids, we may consider 
the notion of \emph{left-soficity} (resp. \emph{right-soficity} for monoids as follows.

First recall that the \emph{opposite monoid} of a monoid $(M,\cdot)$ is the monoid $(M^{op},\circ)$ with the same underlying set $M$ of elements and with multiplication 
defined by $a \circ b := b \cdot a$ for all $a,b \in M^{op}=M$.
Note that every abelian monoid $M$ is isomorphic to its opposite monoid (via the identity map $\Id_M$) and that every group $G$ is isomorphic to its opposite $G^{op}$ via the \emph{inversion map} $g \mapsto g^{-1}$.

Then, we say that a monoid $M$ is \emph{left-sofic} (resp. \emph{right-sofic}) if $M$ (resp. $M^{op}$) is ``sofic'' according to Definition \ref{d:sofic-semigroup}.
In other words, a monoid $M$ is right-sofic if it satisfies the following condition:
for every finite subset $K \subset M$ and every $\varepsilon > 0$,
there exist a non-empty finite set $X$ and a $(K,1 - \varepsilon)$-injective  $(K,\varepsilon)$-morphism $\varphi \colon M \to \Map(X)^{op}$.

Since every group $G$ (resp. abelian monoid $M$) is isomorphic to its opposite $G^{op}$ (resp. $M^{op}$) the two notions of left and right soficity coincide in the group (resp. abelian monoid) setting. Moreover, since finiteness (resp. residual finiteness, resp. cancellative one-side amenability) are preserved under the opposite monoid operation $M \mapsto M^{op}$, we deduce that left and right soficity coincide for finite (resp. residually finite, resp. cancellative one-sided amenable) monoids by virtue of Proposition \ref{t:finite-are-sofic} (resp. Corollary \ref{c:residually-sofic}, resp. Proposition \ref{p:canc-amen-are-sofic}).
We don't know, however, whether or not these two notions also coincide for general 
monoids.

Note that the bicyclic monoid $B = \langle p,q : pq = 1 \rangle$ is isomorphic to
its opposite $B^{op}$ via the map exchanging the generators $p$ and $q$ so that, by virtue of
Theorem \ref{t:bicyclic-not-sofic} $B$ is neither left nor right sofic.

As finitely generated monoids are concerned, we remark the following. Let $M$ be a finitely
generated monoid and $\Sigma$ a finitely generating subset of $M$. Let us call the
graph $\CC(M, \Sigma) = (V,E)$ (resp. $\CC'(M, \Sigma) = (V',E')$) with $V = V' = M$ and $E = \{(m,\sigma,m\sigma): m \in M, \sigma \in \Sigma\}$ (resp. $E' = \{(m,\sigma,\sigma m): m \in M, \sigma \in \Sigma\}$ the  \emph{left Cayley graph} (resp. \emph{right Cayley graph}) of $M$ with respect to $\Sigma$.
It is then clear that $\CC'(M,\Sigma)$ (resp. $\CC'(M^{op},\Sigma)$  is label isomorphic to $\CC(M^{op},\Sigma)$ (resp. $\CC(M,\Sigma)$). All this said, we have the following characterization of right-soficity for finitely generated monoids (cf. Theorem \ref{t:weiss-characterization}).
A finitely generated monoid $M$ is right-sofic if and only if for every $r \in \N$ and every $\delta > 0$, there exists a finite $\Sigma$-labeled graph $\GG' = (V,E)$ with the following property: the subset $V(r) \subset V$, consisting of all the vertices $v \in V$ such that the ball of radius $r$ centered at $v$ in $\GG'$ is isomorphic, as a pointed $\Sigma$-labeled graph, to the ball of radius $r$ centered at $1_M$ in the right Cayley graph $\CC'(M,\Sigma)$, satisfies \eqref{eq:condition-weiss}.\\

\noindent
{\bf Acknowledgments.} We express our deepest gratitude to Jan Cannizzo, Fabrice Krieger and Liviu Paunescu for useful comments and remarks. 

 %%% REFERENCES
\def\cprime{$'$} \def\cprime{$'$}

\end{document}